\theoremstyle{definition}
\theoremstyle{definition}
\theoremstyle{plain}
\theoremstyle{plain}
\newtheorem{theor}{Theorem}
\theoremstyle{plain}
\theoremstyle{plain}
\theoremstyle{plain}
\newtheorem{thm}{Theorem}[subsection]
\theoremstyle{definition}
\newtheorem{ex}[thm]{Example}
\theoremstyle{definition}
\theoremstyle{definition}
\theoremstyle{definition}
\theoremstyle{definition}
\newtheorem{rem}[thm]{Remark}
\theoremstyle{plain}
\newtheorem{prop}[thm]{Proposition}
\theoremstyle{plain}
\newtheorem{lem}[thm]{Lemma}
\theoremstyle{plain}
\theoremstyle{definition}
\theoremstyle{definition}
\theoremstyle{definition}
\theoremstyle{definition}
\theoremstyle{definition}
\theoremstyle{definition}
\theoremstyle{definition}
\theoremstyle{definition}
\numberwithin{equation}{subsection}
\begin{document}
\title{Efficient search for superspecial hyperelliptic curves of genus four with automorphism group containing $\mathbb{Z}_6$}
%
%\titlerunning{Efficient search for superspecial curves with automorphism of order $>2$}
% If the paper title is too long for the running head, you can set
% an abbreviated paper title here
%

% \title{CASC2022}
% \author{Momonari Kudo, Tasuku Nakagawa, and Tsuyoshi Takagi}%\\
%(with Appendix by Momonari Kudo)}

\author{Momonari Kudo}
\address{Department of Mathematical Informatics, The University of Tokyo, Japan.}
\email{kudo@mist.i.u-tokyo.ac.jp}

\author{Tasuku Nakagawa}
\address{Department of Mathematical Informatics, The University of Tokyo, Japan.}
\email{nakagawa-tasuku705@g.ecc.u-tokyo.ac.jp}

\author{Tsuyoshi Takagi}
\address{Department of Mathematical Informatics, The University of Tokyo, Japan.}
\email{takagi@mist.i.u-tokyo.ac.jp}

% \author{Kazuhiro Yokoyama}
% \address{Department of Mathematics, Rikkyo University, Tokyo 171-8501, Japan.}
% \email{kazuhiro@rikkyo.ac.jp}

\date{May 2022}
%
%\authorrunning{Kudo et al.}
% First names are abbreviated in the running head.
% If there are more than two authors, 'et al.' is used.
%
\if 0
\institute{Department of Mathematical Informatics, The University of Tokyo, Japan.\\
% \and
% Graduate School of Information
% Science and Technology, The University of Tokyo, Hongo 7-3-1, Bunkyo-ku,
% Tokyo, 113-8656, Japan. \\
\email{\{sakata-kosuke-rb,m-kudo,kato-taku243\}@g.ecc.u-tokyo.ac.jp}\and
Department of Mathematics, Rikkyo University, Tokyo 171-8501, Japan.
\email{kazuhiro@rikkyo.ac.jp}}
\fi
\maketitle              % typeset the header of the contribution
%
%\begin{abstract}
%In computer algebra and related areas, Gr\"{o}bner bases are  nowadays one of the most important computational tools, and those over exterior algebra also have useful applications such as the efficient computation of sheaf cohomology.
%Variants of algorithms over polynomial rings such as Buchberger's algorithm and Faug\'{e}re's $F_4$ can be constructed also over exterior algebra, but they have not been analyzed both in theory and practice yet.
%In this extended abstract, we report practical behaviors of the Gr\"{o}bner computation over exterior algebra, obtained by our implementation of the variants described as above.
%Moreover, several properties of Gr\"{o}bner bases over exterior algebra shall be presented.
%\keywords{Gr\"{o}bner bases \and Exterior algebra \and .}
%\end{abstract}
%
%
%
%\vspace{-30pt}

\begin{abstract}
In arithmetic and algebraic geometry, superspecial (s.sp.\ for short) curves are one of the most important objects to be studied, with applications to cryptography and coding theory.
If $g \geq 4$, it is not even known whether there exists such a curve of genus $g$ in general characteristic $p > 0$, and in the case of $g=4$, several computational approaches to search for those curves have been proposed.
In the genus-$4$ hyperelliptic case, Kudo-Harashita proposed a generic algorithm to enumerate all s.sp.\ curves, and recently Ohashi-Kudo-Harashita presented an algorithm specific to the case where automorphism group contains the Klein 4-group.
In this paper, we propose an algorithm with complexity $\tilde{O}(p^4)$ in theory but $\tilde{O}(p^3)$ in practice to enumerate s.sp.\ hyperelliptic curves of genus 4 with automorphism group containing the cyclic group of order $6$.
By executing the algorithm over Magma, we enumerate those curves for $p$ up to $1000$.
We also succeeded in finding a s.sp.\ hyperelliptic curve of genus $4$ in every $p$ with $p \equiv 2 \pmod{3}$.
As a theoretical result, we classify hyperelliptic curves of genus $4$ in terms of automorphism groups in the appendix.
\end{abstract}

%========================
\section{Introduction}\label{sec:intro}
%========================
%\vspace{-5pt}
Throughout, all the complexities are measured by the number of arithmetic operations in $\mathbb{F}_{p^2}$ for a prime $p$, unless otherwise noted.
Soft-O notation omits logarithmic factors.
A curve means a non-singular projective variety of dimension one.
Let $K$ be a field of characteristic $p>0$, and $\overline{K}$ its algebraic closure.
A curve $C$ of genus $g$ over $K$ is said to be {\it superspecial} ({\it s.sp}.\ for short) if its Jacobian variety is isomorphic to a product of supersingular elliptic curves.
% An equivalent condition for $C$ to be superspecial is that the Frobenius acts on $H^1(C,\mathcal{O}_C)$ as zero, where $\mathcal{O}_C$ and $H^1(C, \mathcal{O}_C)$ denote respectively the structure sheaf of $C$ and its first cohomology group.
S.sp.\ curves are of course important objects in theory, but also in practical applications such as cryptography using algebraic curves, see e.g., \cite{CDS}, where s.sp.\ genus-$2$ curves are used to construct Hash function.

Given a pair $(g,p)$, only finite s.sp.\ curves of genus $g$ over $\overline{\mathbb{F}_p}$ exist, and the problem of finding or enumerating them is known to be classically important.
For the field of definition, the most important case is $\mathbb{F}_{p^2}$, since any s.sp.\ curve over $K$ is $\overline{K}$-isomorphic to one over $\mathbb{F}_{p^2}$, see the proof of \cite[Theorem 1.1]{Ekedahl}.
For $g \leq 3$, the problem is solved for all $p >0$, based on the theory of principally polarized abelian varieties.
Specifically, for $g=1$ (resp.\ $2$ and $3$), Deuring~\cite{Deuring} (resp.\  Ibukiyama-Katsura-Oort \cite[Theorem 2.10]{IKO}) showed that the number of $\overline{\mathbb{F}_{p}}$-isomorphism classes of s.sp.\ curves is determined by computing the class numbers of a quaternion algebra (resp.\ quaternion hermitian lattices).
These class numbers were computed in \cite{Eichler} (resp.\ \cite{HI}, \cite{H}) for $g=1$ (resp.\ $2$, $3$). 

On the other hand, the problem for $g \geq 4$ has not been solved in all primes, but in recent years, Kudo-Harashita developed several algorithms to count genus-$4$ or $5$ s.sp.\ curves~\cite{KH17}, \cite{KH18}, \cite{KH20}.
In particular, an algorithm for enumerating s.sp.\ {\it hyperelliptic} curves of genus $4$ was proposed in \cite{KH18} and \cite{KH18b}, but is practical only for small $p$ (in fact $p \leq 23$), due to the cost of solving multivariate systems (cf.\ Section \ref{subsec:KHH} below).
Recently, Ohashi-Kudo-Harashita~\cite{OKH22} (resp.\ Kudo-Harashita-Howe~\cite{KHH}) presented an algorithm for enumerating s.sp.\ hyperelliptic (resp.\ non-hyperelliptic) curves of genus $4$ with automorphism group containing a subgroup isomorphic to the Klein $4$-group $V_4 = \mathbb{Z}_2 \times \mathbb{Z}_2$, with complexity $\tilde{O}(p^3)$ (resp.\ $\tilde{O}(p^4)$), where $\mathbb{Z}_n$ denotes the cyclic group of order $n$.
They also succeeded in enumerating such s.sp.\ curves for every prime $p$ up to $200$.

This paper proposes a more efficient algorithm than \cite{KH18} to produce s.sp.\ hyperelliptic curves of genus $4$,
%restricting ourselves to a certain parametric family of curves.
%We shall construct an algorithm specific to producing s.sp.\ hyperelliptic curves of genus $4$, 
which is practical for $p$ extremely larger than some number mentioned in \cite{KH18}.
For this, we focus on a family of hyperelliptic curves given by $H_{a,b} : y^2 = f_{a,b} (x):= x^{10} + x^7 + a x^4 + b x$, where $a, b \in \mathbb{F}_{p^2}$.
This kind of a curve appears as a s.sp.\ curve over $\mathbb{F}_{17^2}$ enumerated in \cite{KH18} (see also Table \ref{table:ssp} in Section \ref{subsec:KHH} below), and it tends to be s.sp.\ from our preliminary computation; by exhaustive search for $(a,b)$, we confirmed that there exists (resp.\ does not exist) $(a, b)$ such that $H_{a,b}$ is s.sp.\ for any $17 \leq p < 100$ with $p \equiv 2 \bmod{3}$ (resp.\ $p \equiv 1 \bmod{3}$).
We also note that the reduced (resp.\ full) automorphism group of $H_{a,b}$ contains a subgroup isomorphic to $\mathbb{Z}_3$ (resp.\ $\mathbb{Z}_6$), see Theorem \ref{thm:app} for a complete classification of reduced and full automorphism groups of hyperelliptic curves of genus $4$.

Here, main results of this paper are summarized in Theorems \ref{thm:main11} and \ref{thm:main22} below.

\begin{theor}\label{thm:main11}
There exists an algorithm (Main Algorithm in Theorem \ref{thm:main1}) with complexity $\tilde{O}(p^4)$ to enumerate the $\overline{\mathbb{F}_p}$-isomorphism classes of all s.sp. $H_{a,b}$'s with $a,b \in \mathbb{F}_{p^2}$.
Assuming the gcd of resultants appearing in the algorithm has degree $O(p)$, the complexity becomes $\tilde{O}(p^3)$.
\end{theor}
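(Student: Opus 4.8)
The plan is to recast superspeciality as the vanishing of the Hasse--Witt (Cartier--Manin) matrix and then to exploit the order-$3$ symmetry of the family to cut the resulting system down to a bivariate one that can be solved by elimination. First I would recall that for a hyperelliptic curve $y^2 = f(x)$ of genus $g$ in characteristic $p$, superspeciality is equivalent to the vanishing of the $g \times g$ Hasse--Witt matrix $\big(c_{pi-j}\big)_{1 \le i,j \le g}$, where $f^{(p-1)/2} = \sum_k c_k x^k$. For $H_{a,b}$ we have $g=4$, together with the structural fact that $f_{a,b}(x) = x\, g_{a,b}(x^3)$ with $g_{a,b}(t) = t^3 + t^2 + a t + b$, reflecting the automorphism $x \mapsto \zeta_3 x$ that generates the $\mathbb{Z}_6$ in the statement. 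Hence $f_{a,b}^{(p-1)/2} = x^{(p-1)/2}\, g_{a,b}(x^3)^{(p-1)/2}$, so $c_k = 0$ unless $k \equiv (p-1)/2 \pmod 3$, and every possibly-nonzero entry $c_{pi-j}$ equals a single coefficient $d_m(a,b)$ of $g_{a,b}(t)^{(p-1)/2}$, with $m = m_{ij} = (p(2i-1) - (2j-1))/6$. Thus, after sorting which $(i,j)$ yield an integer $m_{ij}$ (a case distinction on $p \bmod 6$, possibly $\bmod 12$), the $16$ entries collapse to a handful of equations $d_{m_{ij}}(a,b) = 0$, each a bivariate polynomial of total degree $O(p)$.

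Next I would make these polynomials effective and solve them. A multinomial expansion (or the logarithmic-derivative recurrence $g_{a,b} G' = \tfrac{p-1}{2}\, g_{a,b}' G$ for $G = g_{a,b}^{(p-1)/2}$) lets me compute each needed $d_{m_{ij}}$ as an explicit element of $\mathbb{F}_{p^2}[a,b]$; since only $O(1)$ of them are needed and each has $O(p^2)$ monomials, this setup is not the bottleneck. Picking two of the equations, I would eliminate $b$ by forming the resultants $R_{ij}(a) = \mathrm{Res}_b\big(d_{m_i}, d_{m_j}\big) \in \mathbb{F}_{p^2}[a]$, each of degree $O(p^2)$, and take their gcd $R(a)$ to discard spurious factors coming from degenerate leading coefficients; the $a$-coordinates of the s.sp.\ members are then the roots of $R$ lying in $\mathbb{F}_{p^2}$, computed via $\gcd(R(a), a^{p^2}-a)$. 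For each such $a_0$ I back-substitute, solve the univariate $\gcd_b$ for $b_0$, and confirm superspeciality by testing the full Hasse--Witt matrix of $H_{a_0,b_0}$ directly; finally I quotient the surviving pairs by the finitely many admissible coordinate changes $x \mapsto \nu x,\ y \mapsto \nu^5 y$ (and the residual M\"obius and hyperelliptic symmetries) to report $\overline{\mathbb{F}_p}$-isomorphism classes.

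For the complexity I would argue that the dominant cost is the elimination step. Carrying the degree-$O(p^2)$ coefficients in $a$ through the $O(p)$-length subresultant chain (equivalently, interpolating a degree-$O(p^2)$ resultant from $O(p^2)$ specializations) costs $\tilde{O}(p^4)$, which yields the unconditional bound, while the remaining root-finding, back-substitution, and verification are each only $\tilde{O}(p^3)$. Under the hypothesis that the gcd $R(a)$ of the resultants has degree $O(p)$, I can instead compute the resultants modulo a generic modulus of degree $O(p)$ and recover $R$ from its reduction, which shrinks every intermediate coefficient to degree $O(p)$ and brings the elimination down to $\tilde{O}(p^3)$. The main obstacle I anticipate is the rigorous part of this analysis: showing that the reduced finite system is genuinely equivalent to superspeciality (so that no class is lost when passing to $d_{m_{ij}} = 0$) and that the gcd of resultants cuts out exactly the solution set rather than a spurious superset, since controlling vanishing leading coefficients and common factors is precisely what separates the clean $\tilde{O}(p^3)$ target from the safe $\tilde{O}(p^4)$ fallback --- and the degree-$O(p)$ bound on the gcd is only empirical, which is why the faster estimate must be stated conditionally.
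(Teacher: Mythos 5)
Your outline matches the paper's own strategy: the structure $f_{a,b}(x) = x\,g(x^3)$ collapses the Cartier--Manin matrix to a handful of coefficients of $g^{(p-1)/2}$, each a bivariate polynomial of degree $O(p)$ in $a$ and $b$ obtained from the Flajolet--Salvy-type recurrence in $O(p^3)$ (the paper additionally needs a second, \emph{backward} recurrence from the top coefficient to reach the $i=4$ row, since the forward one divides by $k$ and stalls at $k \equiv 0 \bmod p$; your multinomial-sum alternative sidesteps this), followed by resultants in one variable, a gcd, root extraction over $\mathbb{F}_{p^2}$, back-substitution, and verification. But your complexity accounting for the conditional $\tilde{O}(p^3)$ bound has a genuine flaw. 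You place the unconditional $\tilde{O}(p^4)$ on the resultant computation and then propose to ``compute the resultants modulo a generic modulus of degree $O(p)$ and recover $R$ from its reduction.'' This is unsound: $\gcd(R_1,R_2)$ is not determined by $R_1 \bmod \mu$ and $R_2 \bmod \mu$ for a modulus $\mu$ unrelated to the (unknown) gcd, whatever the gcd's degree. In the paper the bottleneck sits elsewhere: the bivariate resultants are computed \emph{unconditionally} in $\tilde{O}(p^3)$ (via \cite{HL21}, or by evaluation--interpolation with fast multipoint evaluation), and the $\tilde{O}(p^4)$ comes from the degree $d = O(p^2)$ of the gcd of the resultants --- root-finding in $\tilde{O}(d^2)$ plus the per-root back-substitution loop $\tilde{O}(d\,p^2)$ --- so the hypothesis $d = O(p)$ by itself collapses the total to $\tilde{O}(p^3)$ with no new algorithmic trick. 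As written, the conditional half of the theorem is not established by your argument; relatedly, your unconditional claim that back-substitution costs only $\tilde{O}(p^3)$ is unjustified, since there can be $O(p^2)$ candidate roots (this does not hurt your $\tilde{O}(p^4)$ total, but it shows the conditional hypothesis is needed exactly where the paper uses it).

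The second gap is the isomorphism classification. Quotienting ``by the finitely many admissible coordinate changes $x \mapsto \nu x$, $y \mapsto \nu^5 y$ (and the residual M\"obius and hyperelliptic symmetries)'' presupposes you know which M\"obius isomorphisms can connect two members of the family, and that is precisely the nontrivial content. The paper proves (Lemma \ref{lem:HabIsom}) that when $\overline{\mathrm{Aut}}(H_{a,b})$ is $\mathbb{Z}_3$ or $\mathbb{Z}_9$, any isomorphism $H_{a,b} \cong H_{a',b'}$ forces $(a,b)=(a',b')$ --- by conjugating the order-$3$ automorphism, using that $\mathbb{Z}_9$ has a unique order-$3$ subgroup, and excluding the anti-diagonal matrix by a degree count --- so no pairwise testing is needed there; by Lemma \ref{lem:C3} the only remaining case is $\overline{\mathrm{Aut}} \cong A_4$, where all such curves are mutually isomorphic (Lemma \ref{lem:A4S4}) and membership is detected in constant time per candidate by the root criterion of Lemma \ref{lem:isomA4}. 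Without this trichotomy your quotient step either miscounts classes (non-diagonal isomorphisms \emph{do} occur, exactly on the $A_4$ locus) or silently falls back on generic pairwise isomorphism testing, which requires separate justification and costs $O(N^2)$ on $N$ candidates. Since the theorem asserts enumeration of $\overline{\mathbb{F}_p}$-isomorphism classes, this step cannot be waved through.
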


While the outline of our algorithm is same as that of our previous algorithm in \cite{KH18}, we shall develop various computational techniques specific to our family $H_{a,b}$.
Specifically, we first prove in Lemma \ref{lem:HabCM} that the Cartier-Manin matrix $M_{a,b}$ of $H_{a,b}$ with {\it parameters} $a$ and $b$ can be computed very efficiently, in $O(p^3)$, only with linear algebra.
We then solve the equation $M_{a,b} = 0$ in $\tilde{O}(p^4)$ with bivariate resultants, where the complexity becomes $\tilde{O}(p^3)$ assuming the gcd of computed resultants has degree $O(p)$;
we see from our computational results that this assumption is practical.
To make isomorphism classification of s.sp.\ $H_{a,b}$'s obtained as above efficient, we furthermore present some criteria based on our theoretical results provided in Appendix \ref{sec:app} on automorphism groups of genus-$4$ hyperelliptic curves.
For instance, it will be proved in Lemma \ref{lem:HabIsom} that two curves $H_{a,b}$ and $H_{a',b'}$ with reduced automorphism groups $\mathbb{Z}_3$ or $\mathbb{Z}_9$ are isomorphic, then $(a,b) = (a',b')$.
These criteria reduce the cost of isomorphism classification from $O(p^4)$ to $O(p^2)$ (in practice $O(p)$).
%The complexity is proved to be $\tilde{O}(p^3)$.
%which is even more efficient than \cite{KHH} for non-hyperelliptic case.

By implementing and executing our algorithm on Magma~\cite{Magma}, we succeeded in enumerating s.sp.\ $H_{a,b}$'s with $a,b \in \mathbb{F}_{p^2}$ up to isomorphisms over $\overline{\mathbb{F}_p}$ for every prime $p$ between $17$ and $1000$.
More precisely, we obtain the following computational results:
%also prove the existence of such a curve for every $p$ with $17 \leq p < 1000$ and $p \equiv 2 \bmod{3}$.

\begin{theor}\label{thm:main22}
For every prime $p$ with $17 \leq p < 1000$, the number of $\overline{\mathbb{F}_{p}}$-isomorphism classes of s.sp.\ $H_{a,b}$'s with $a,b \in \mathbb{F}_{p^2}$ are summarized in Tables \ref{table:1} -- \ref{table:4} below.
In particular, for each $17 \leq p < 1000$ with $p \equiv 2 \bmod{3}$ (resp.\ $p \equiv 1 \bmod{3}$), there exists (resp.\ does not exist) $(a,b) \in \mathbb{F}_{p^2}^2$ such that $H_{a,b}$ is a s.sp.\ hyperelliptic curve.
\end{theor}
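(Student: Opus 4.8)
The statement is computational in nature: the tables are to be produced, and their entries certified, by executing the Main Algorithm of Theorem~\ref{thm:main1} for each prime $p$ with $17 \le p < 1000$. The plan is therefore to reduce the assertion to the correctness and termination of that algorithm, established in Theorem~\ref{thm:main11}, and then to the actual finite verification on Magma.

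First I would recall the defining characterization used throughout: a curve of genus $g$ in characteristic $p$ is superspecial if and only if its Cartier--Manin matrix vanishes identically, so that $H_{a,b}$ is s.sp.\ if and only if $M_{a,b} = 0$. By Lemma~\ref{lem:HabCM} the matrix $M_{a,b}$ is computed once and for all as a matrix whose entries are explicit polynomials in the parameters $a$ and $b$, in $O(p^3)$ operations using only linear algebra. Enumerating all s.sp.\ members of the family then amounts to determining the common zero locus of these entries over $\mathbb{F}_{p^2}$, which the algorithm carries out by the bivariate-resultant technique described in Theorem~\ref{thm:main1}, with complexity $\tilde{O}(p^4)$ in general and $\tilde{O}(p^3)$ under the stated gcd-degree hypothesis.

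The second ingredient is the removal of redundancies: the raw solution set must be collapsed to $\overline{\mathbb{F}_p}$-isomorphism classes. Here I would invoke Lemma~\ref{lem:HabIsom}, which shows that for curves with reduced automorphism group $\mathbb{Z}_3$ or $\mathbb{Z}_9$ an isomorphism forces equality of parameters, together with the finer criteria derived from the automorphism-group classification of Theorem~\ref{thm:app} and Appendix~\ref{sec:app} for the remaining special values of $(a,b)$. Applying these criteria to the solution set yields exactly one representative per isomorphism class, and the number of representatives is the count recorded in Tables~\ref{table:1}--\ref{table:4}. The dichotomy in $p \bmod 3$ is then read directly off the output: for $p \equiv 1 \pmod{3}$ the common zero locus of $M_{a,b}$ is empty, whereas for $p \equiv 2 \pmod{3}$ it is nonempty, exhibiting at least one s.sp.\ curve.

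The main obstacle is not mathematical but computational: carrying the execution through $p$ close to $1000$ within feasible time and memory. This hinges on the practical behaviour of the resultant step, whose worst-case $\tilde{O}(p^4)$ cost would be prohibitive; the saving to $\tilde{O}(p^3)$ relies on the empirical fact that the gcd of the computed resultants has degree $O(p)$, which must be checked to hold for every prime in the range rather than merely assumed. A secondary concern is certifying the implementation itself --- in particular that the polynomial form of $M_{a,b}$ and the isomorphism criteria are encoded faithfully --- so that the tabulated numbers are genuinely exhaustive and free of duplicates.
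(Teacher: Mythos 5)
Your proposal matches the paper's own treatment: Theorem~\ref{thm:main22} is established there exactly by implementing Main Algorithm (whose correctness and complexity are Theorem~\ref{thm:main1}, resting on Lemmas~\ref{lem:ssp}, \ref{lem:HabCM}, \ref{lem:HabIsom} and \ref{lem:A4S4}) on Magma and running it for all $17 \leq p < 1000$, with the tables and the mod-$3$ dichotomy read off the output. The only slight overstatement is that the gcd-degree hypothesis need not be ``checked to hold'' for correctness --- it affects only the running time, and the tables simply record the observed degrees --- but this does not change the argument.
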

% We can easily increase the upper bound on p in Theorem 3.3.1. For example, on the PC de-
% scribed above, computing all the s.sp. Ha,b’s took 230,719 seconds (about 64 hours), and finding a
% single example of a s.sp. Ha,b took only 98 seconds, for every p between 17 and 1000 .
The upper bound on $p$ in Theorem \ref{thm:main22} is much larger than those of \cite{KH18} and \cite{KH18b}, and it can be increased easily;
for instance, on a PC with macOS Monterey 12.0.1, at 2.6 GHz CPU 6 Core (Intel Core i7) and 16GB memory, it took 13,226 seconds (about 3.7 hours) in total for computing the $\overline{\mathbb{F}_p}$-isomorphicm classes of s.sp.\ $H_{a,b}$'s with $a,b \in \mathbb{F}_{p^2}$ for all $17 \leq p < 1000$, and the execution time for $p=997$ was only 196 seconds.

As a theoretical result, we shall also give an explicit classification of hyperelliptic curves of genus $4$ completely, in terms of their automorphism groups, in Theorem \ref{thm:app} of Appendix \ref{sec:app}.
With this classification, we see that our family $H_{a,b}$ is included in the types {\bf 3}, {\bf 7} and {\bf 9}, while Ohashi-Kudo-Harashita's recent work~\cite{OKH22} treats the types {\bf 2-1}, {\bf 4-1}, {\bf 5}, {\bf 6}, {\bf 8} and {\bf 10}.

The rest of this paper is organized as follows.
Section 2 is devoted to preliminaries, where we review general facts on hyperelliptic curves, Cartier-Manin matrices, and enumeration results in \cite{KH18}, \cite{KH18b} on s.sp.\ hyperelliptic curves.
Section 3 provides the main results.
Section 4 is conclusion.

%==========================================
\section{Preliminaries}\label{sec:pre}
%==========================================

This section reviews general facts on hyperelliptic curves, and recalls the definition of Cartier-Manin matrices and the superspeciality of curves.
In particular, we will describe a method to compute the Cartier-Manin matrix of a hyperelliptic curve.
Some facts on the Cartier operator of a cyclic cover will be also reviewed.
We also review Kudo-Harashita's algorithm~\cite{KH18}, \cite{KH18b} to enumerate superspecial hyperelliptic curves, and their enumeration results.

%=====================================================
\subsection{Hyperelliptic curves and their isomorphisms}
%=====================================================

Let $K$ be a field of characteristic $p$ with $p \neq 2$, and $k=\overline{K}$ its algebraic closure.
For a curve $C$ of genus $g \geq 2$ over $K$, let $\mathrm{Aut}_K(C)$ denote the automorphism group of $C$ over $K$, and $\mathrm{Aut}_{k}(C)$ is denoted simply by $\mathrm{Aut}(C)$.
It is well-known that $\mathrm{Aut}(C)$ is finite for an arbitrary $C$, and has size $\leq 16 g^4$ unless $C$ is a Hermitian curve~\cite{Sti}.
If the characteristic of $K$ exceeds $g+1$, we have a quite more strong bound $\mathrm{Aut}(C) \leq 84 (g-1)$, see \cite{Roq}.
A {\it hyperelliptic curve} $H$ over $K$ is a curve $H$ over $K$ admitting a degree-$2$ morphism over $K$ from $H$ to the projective line $\mathbb{P}_k^1$.
Let $\iota$ be the {\it hyperelliptic involution} of $H$, that is, the unique involution over $k$ on $C$ such that the quotient curve $C / \langle \iota \rangle$ is rational.
We call the quotient group $\overline{\rm Aut}(H):=\mathrm{Aut}(H) / \langle \iota \rangle$ the {\it reduced automorphism group} of $H$, while $\mathrm{Aut}(H)$ and $\mathrm{Aut}_K(H)$ are often called {\it full} automorphism groups.

A typical way to represent a hyperelliptic curve $H$ explicitly is realizing it as the desingularization of the projective closure of an affine plane curve $y^2 = f(x)$, where $f(x) \in k[x]$ is a separable polynomial of degree $2g +1$ or $2g+2$.
In this situation, we simply write $H : y^2 = f(x)$ and call the equation $y^2 = f(x)$ a (hyperelliptic) equation of $H$.
We can also write down an equation of $H$ in terms of a field $K$ of definition for $H$:
%As for the form of $f(x)$, we have the following lemma:
% it is shown in \cite[Section 3.2]{KH18}, we have the following lemma:
% the authors gave a reduction of a defining equation of $H$ so that the set of all the ramification points of the reduced model is defined over $K$:

\begin{lem}[{\cite[Lemma 2]{KH18}}]\label{ReductionHyper}
Let $H$ be a hyperelliptic curve of genus $g$ over $K$.
Assume that $p$ and $2g+2$ are coprime, and let $\epsilon \in K^\times \smallsetminus (K^\times)^2$.
Then $H$ is birational to the projective closure of
\begin{equation}\label{eq:hyp}
c y^2 = f(x) =  x^{2g+2} + b x^{2g} + a_{2g-1}x^{2g-1} + \cdots + a_1 x + a_0,
\end{equation}
where $a_i \in K$ for $0 \leq i \leq 2g-1$, and where $b= 0, 1,\epsilon$ and $c=1,\epsilon$.
\end{lem}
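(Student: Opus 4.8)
The plan is to reduce a general hyperelliptic equation to the normalized form in \eqref{eq:hyp} by a sequence of explicit coordinate changes on $\mathbb{P}^1_k$, using that $\overline{\mathrm{Aut}}(\mathbb{P}^1)$ acts as $\mathrm{PGL}_2$, and by scaling the $y$-coordinate. First I would start from any model $c' y^2 = g(x)$ with $\deg g = 2g+2$ and leading coefficient $c'' \neq 0$. After dividing, I may assume the polynomial on the right is monic of degree $2g+2$, so that $\tilde c\, y^2 = x^{2g+2} + c_{2g+1} x^{2g+1} + \cdots + c_0$ for some $\tilde c \in K^\times$. The goal of the normalization is twofold: (i) kill the degree-$(2g+1)$ term to arrange the coefficient $b$ of $x^{2g}$ into one of three fixed representatives, and (ii) reduce the leading nonsquare constant $\tilde c$ to lie in $\{1,\epsilon\}$.

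For step (i), I would apply the substitution $x \mapsto x - \tfrac{c_{2g+1}}{2g+2}$, a translation defined over $K$ since $2$ is invertible; here is exactly where the hypothesis that $p$ and $2g+2$ are coprime enters, guaranteeing $2g+2 \in K^\times$ so the translation is legitimate. This depresses the polynomial, removing the $x^{2g+1}$ term and producing a new coefficient $b'$ of $x^{2g}$. If $b' = 0$ we are already in the case $b=0$; otherwise I would rescale $x \mapsto \lambda x$ for a suitable $\lambda \in K^\times$, which multiplies $b'$ by $\lambda^{-2}$ (after re-monicizing), so that $b'$ can be brought to $1$ when $b'$ is a square in $K^\times$ and to the fixed nonsquare $\epsilon$ when $b'$ is a nonsquare. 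This accounts for the three values $b = 0,1,\epsilon$, and one must track how the rescaling $x \mapsto \lambda x$ and the re-monicization interact with the constant $\tilde c$ — keeping $f$ monic forces a compensating change in $\tilde c$ by an even power of $\lambda$, which preserves its square-class.

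For step (ii), the outstanding scalar $\tilde c \in K^\times$ lies in a well-defined square class; since $K^\times/(K^\times)^2$ is represented by $\{1,\epsilon\}$ by the choice of $\epsilon$, I can absorb any square factor of $\tilde c$ into a redefinition $y \mapsto \mu y$, leaving $c \in \{1,\epsilon\}$. The remaining lower coefficients $a_{2g-1},\dots,a_0$ are whatever they become after these substitutions; they are unconstrained, which matches the statement.

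\emph{The main obstacle} I anticipate is bookkeeping rather than conceptual: one must verify that the operations used to achieve (i) and (ii) can be performed \emph{simultaneously over $K$} without ever needing to pass to an extension, and in particular that the square-class of the leading constant and the square-class of $b'$ can be adjusted independently. The subtle point is that the rescaling $x\mapsto \lambda x$ used to fix $b'$ simultaneously alters $\tilde c$, so the two normalizations are coupled; I would resolve this by performing the $x$-rescaling \emph{first} (to set $b\in\{1,\epsilon\}$, or leaving $b=0$), and only afterward using the independent $y$-rescaling $y\mapsto \mu y$ to fix $c\in\{1,\epsilon\}$, since $y$-scaling does not touch $f$ at all. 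This ordering decouples the two steps, and the fact that all maps are birational over $K$ (Möbius transformations of $\mathbb{P}^1$ together with $y$-scaling) ensures the resulting curve is genuinely birational to $H$ over $K$, as claimed.
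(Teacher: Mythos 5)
Your normalization steps (kill the $x^{2g+1}$ term by the translation $x \mapsto x - \tfrac{c_{2g+1}}{2g+2}$, which is where $\gcd(p,2g+2)=1$ enters; fix the square class of the $x^{2g}$ coefficient by $x \mapsto \lambda x$ with compensating $y \mapsto \lambda^{g+1}y$, which changes $\tilde c$ only by the square $\lambda^{2g+2}$; then fix $c \in \{1,\epsilon\}$ by a $y$-rescaling) are all correct, and your ordering correctly decouples the two square-class adjustments. This matches the shape of the argument in the cited source \cite{KH18} (the paper itself gives no proof, only the citation). However, there is a genuine gap at the very first line: you \emph{assume} a model $c'y^2 = g(x)$ with $\deg g = 2g+2$. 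The standard model of a hyperelliptic curve over $K$ has $\deg g = 2g+1$ \emph{or} $2g+2$, and the odd-degree case occurs exactly when the point at infinity is a $K$-rational branch point. Passing from degree $2g+1$ to degree $2g+2$ is not free: one must apply a M\"obius substitution such as $x \mapsto x_0 + 1/x$, $y \mapsto y/x^{g+1}$ with $x_0 \in K$ \emph{not} a root of $g$, i.e., one needs a $K$-rational point of $\mathbb{P}^1$ outside the branch locus. This is automatic for infinite $K$ but is a real condition over small finite fields: for instance $y^2 = x^5 - x$ over $\mathbb{F}_5$ (genus $2$, and $\gcd(5,6)=1$) has all of $\mathbb{P}^1(\mathbb{F}_5)$ as branch points, so every $\mathbb{F}_5$-rational M\"obius transformation keeps $\infty$ in the branch locus and no degree-$6$ model over $\mathbb{F}_5$ exists. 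So this step is precisely the nontrivial content that makes the even-degree normal form available, and it cannot be waved away; in the paper's setting ($K=\mathbb{F}_{p^2}$, $g=4$, $p \geq 7$, so $\#\mathbb{P}^1(K) = p^2+1 > 2g+2$) the required point exists, but your write-up never performs or justifies this reduction.

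A second, more minor imprecision: your step (ii) invokes ``$K^\times/(K^\times)^2$ is represented by $\{1,\epsilon\}$,'' which is true for finite fields of odd characteristic but false for general $K$ (e.g., $K=\mathbb{Q}$ has infinite square-class group). The lemma as stated inherits this implicit restriction from \cite{KH18}, where $K$ is a finite field, so you are not wrong to use it in context, but a complete proof should flag that this is where finiteness (or at least $[K^\times : (K^\times)^2] \le 2$) of the base field is used — both for placing $c$ in $\{1,\epsilon\}$ and for placing the $x^{2g}$ coefficient in $\{0,1,\epsilon\}$.
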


The following lemma gives a criterion to test whether two hyperelliptic curves over $K$ are $K$-isomorphic to each other, or not:

\begin{lem}[{\cite[Section 1.2]{LR}} or {\cite[Lemma 1]{KH18}}]\label{lem:isom}
Let $H_i: c_i y^2=f_i(x)$ be hyperelliptic curves of genus $g$ over $K$ for $i=1$ and $2$, where $c_i y^2 = f_i(x)$ is of the form \eqref{eq:hyp}.
For any $K$-isomorphism $\sigma \colon H_1 \to H_2$, there exists $(P,\lambda) \in \mathrm{GL}_2(K) \times K^{\times}$ with
\[
P=
\begin{pmatrix}
\alpha & \beta \\
\gamma & \delta
\end{pmatrix} 
\]
such that
\[
\sigma (x,y) = \left( \frac{\alpha x + \beta}{\gamma x + \delta}, \frac{\lambda y}{(\gamma x + \delta)^{g+1}} \right)
\]
for all $(x,y)$ on $H_1$.
The representation of $\sigma$ is unique up to the equivalence $(P,\lambda) \sim (\mu P, \mu^{g+1} \lambda)$ for $\mu \in K^{\times}$.
\end{lem}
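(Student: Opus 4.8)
The plan is to exploit the uniqueness of the hyperelliptic involution to first pin down the $x$-coordinate of $\sigma$ as a Möbius transformation, and then to determine the $y$-coordinate by a divisor computation on the quotient $\mathbb{P}^1$. Throughout I use the standing assumption $p \neq 2$, which is what makes the eigenspace decomposition of the order-$2$ involution available.

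First I would show that $\sigma$ commutes with the hyperelliptic involutions, i.e.\ $\sigma \circ \iota_1 = \iota_2 \circ \sigma$. Indeed, $\sigma \iota_1 \sigma^{-1}$ is an involution of $H_2$ whose quotient is isomorphic to $H_1/\langle \iota_1\rangle \cong \mathbb{P}^1_k$, hence rational; since for $g \ge 2$ the hyperelliptic involution is the \emph{unique} involution with rational quotient, we get $\sigma\iota_1\sigma^{-1} = \iota_2$. Consequently $\sigma$ descends to an isomorphism $\overline{\sigma}$ of the quotient curves $H_i/\langle\iota_i\rangle \cong \mathbb{P}^1_K$. As every $K$-automorphism of $\mathbb{P}^1$ is a Möbius transformation, $\overline{\sigma}$ is given by some $P = \left(\begin{smallmatrix}\alpha & \beta\\ \gamma & \delta\end{smallmatrix}\right)\in \mathrm{GL}_2(K)$. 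Writing the two quotient maps as $(x,y)\mapsto x$ — valid because in the model \eqref{eq:hyp} the involution is $(x,y)\mapsto(x,-y)$ — this yields the first coordinate $x \circ \sigma = \frac{\alpha x + \beta}{\gamma x + \delta}$.

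Next I would determine the $y$-coordinate. Since $\sigma$ intertwines the involutions, the pullback $\sigma^* y_2$ is anti-invariant under $\iota_1$; because the $(-1)$-eigenspace of $\iota_1$ on the function field $K(H_1) = K(x)[y_1]$ is exactly $K(x)\cdot y_1$, we may write $\sigma^* y_2 = r(x)\,y_1$ for some $r \in K(x)$. To find $r$ I would compare divisors on $\mathbb{P}^1_x$ of the $\iota_1$-invariant function $r = \sigma^* y_2/y_1$: both $y_1$ and $\sigma^* y_2$ vanish simply precisely at the $2g+2$ finite Weierstrass points of $H_1$ (for $\sigma^* y_2$ this uses that $\sigma^{-1}$ carries the fixed points of $\iota_2$ to those of $\iota_1$), so these cancel in the ratio. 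The only remaining contributions come from the points above $x = \infty$: in the degree-$(2g+2)$ model $y_1$ has a pole of order $g+1$ at $x=\infty$, while $\sigma^* y_2$ has its pole of order $g+1$ at $x = -\delta/\gamma = P^{-1}(\infty)$. Hence $\operatorname{div}(r) = (g+1)[\infty] - (g+1)[-\delta/\gamma]$ on $\mathbb{P}^1_x$, which forces $r(x) = \lambda/(\gamma x + \delta)^{g+1}$ for some $\lambda \in K^\times$ (the case $\gamma = 0$ degenerating to a nonzero constant, as it should). This gives the stated formula; as a consistency check one may substitute into $c_2 y^2 = f_2(x)$ and recover $c_2 r(x)^2 f_1(x)/c_1 = (\gamma x+\delta)^{-(2g+2)}\,\widetilde{f_2}(x)$ with $\widetilde{f_2}(x) = (\gamma x + \delta)^{2g+2} f_2\!\left(\frac{\alpha x + \beta}{\gamma x + \delta}\right)$.

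Finally, for the uniqueness clause I would observe that replacing $(P,\lambda)$ by $(\mu P, \mu^{g+1}\lambda)$ leaves both coordinates of $\sigma$ unchanged, and conversely that the Möbius part of $\sigma$ determines $P$ up to a scalar $\mu\in K^\times$ (since $\mathrm{Aut}(\mathbb{P}^1_K)=\mathrm{PGL}_2(K)$), after which $\lambda$ is pinned down up to the factor $\mu^{g+1}$. The main obstacle is the divisor bookkeeping in the third step — correctly matching the Weierstrass (ramification) divisors under $\sigma$ and analyzing the behavior over $x=\infty$ in the even-degree model, including the degenerate case $\gamma=0$; everything else is formal once the involutions are known to commute.
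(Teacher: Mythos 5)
The paper gives no proof of this lemma at all — it is quoted verbatim from \cite{LR} and \cite{KH18} — and your argument is a correct, complete version of the standard proof those sources rely on: uniqueness of the hyperelliptic involution for $g\geq 2$ forces $\sigma$ to intertwine $\iota_1,\iota_2$ and hence descend to an element of $\mathrm{PGL}_2(K)$ on the quotient $\mathbb{P}^1$, after which the $(-1)$-eigenspace decomposition (valid since $p\neq 2$) and your divisor computation in the even-degree model \eqref{eq:hyp} correctly pin down $\sigma^* y_2 = \lambda y_1/(\gamma x+\delta)^{g+1}$, including the degenerate case $\gamma=0$ and the uniqueness clause via $\mathrm{Aut}(\mathbb{P}^1_K)=\mathrm{PGL}_2(K)$. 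I see no gaps; your bookkeeping at the Weierstrass points and over $x=\infty$ (using that the points at infinity are non-Weierstrass because $\deg f_i = 2g+2$ with $f_i$ separable) is exactly right.
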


\begin{rem}\label{rem:iso}
Considering a hyperelliptic curve $H : y^2 = f(x)$ over an algebraically closed field $k$, we may assume that the coefficients of the highest and lowest degree terms in $f(x)$ are $1$.
Indeed, writing $f(x) = \sum_{i=s}^d a_i x^i$ with $d= 2g+1, 2g+2$, $s=0,1$, $a_d \neq 0$ and $a_s \neq 0$, an isomorphism $(x,y) \mapsto \left( \frac{\alpha x}{\delta}, \frac{y}{\delta^{g+1}} \right)$ transforms $y^2 = f(x)$ into $y^2 = \sum_{i=s}^d a_i \alpha^i \delta^{2g+2 - i} x^i $ choosing $\alpha$ and $\delta$ so that $a_{d} \alpha^{d} \delta^{2g+2-d}  = 1$ and $a_s \alpha^s \delta^{2g+2-s} =1$, as desired.
\end{rem}

%=====================================================
\subsection{Cartier-Manin matrices and superspeciality}
%=====================================================

In this section, we review how to compute the Cartier-Manin matrix of a hyperelliptic curve.

We start with recalling the definition of the Cartier operator and the Cartier-Manin matrix for a general curve which admits an affine plane model.
Assume for simplicity that $C$ is birational to an affine plane (possibly singular) curve $F(x,y) = 0$ in $\mathbb{A}^2$ with coordinate ring $R:=k[x,y]/ \langle F \rangle $, where $F$ is an irreducible polynomial over $k$ in $x$ and $y$.
We may identify the function field $k(C)$ and the field of fractions $K:=k(x,y)$ for $R$.
Under this identification, every regular differential form $\omega \in H^0 (C, \Omega_C^1)$ is uniquely written as $\omega = d \phi + \eta^p x^{p-1} d x$ for $\phi, \eta \in k(C)$.
Here we define a map
\[
\mathscr{C} : H^0 (C, \Omega_C^1) \to H^0 (C, \Omega_C^1)
\]
by $\mathscr{C} (\omega) := \eta dx$, and call it the (modified) {\it Cartier operator} on $H^0 (C, \Omega_C^1) $.
Moreover, the matrix representing $\mathscr{C}$ with respect to a basis $\mathcal{A}$ for the $g$-dimensional space $H^0(C, \Omega_C^1)$ is called the {\it Cartier-Manin matrix} of $C$.
We here also recall Nygaard's criterion for superspeciality in terms of the Cartier operator:

\begin{thm}[{\cite[Theorem 4.1]{Nygaard}}]\label{thm:Nygaard}
With notation as above, the Jacobian variety $J(C)$ of a curve $C$ is isomorphic to a product of supersingular elliptic curves if and only if $\mathscr{C}$ vanishes.
\end{thm}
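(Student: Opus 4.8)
The plan is to reduce the statement to a criterion for superspeciality living purely on the Jacobian, and then to transport that criterion back to the curve by Serre duality. First I would pass from $C$ to its Jacobian $J := J(C)$, using the canonical isomorphisms $H^1(J,\mathcal{O}_J) \cong H^1(C,\mathcal{O}_C)$ and $H^0(J,\Omega^1_J) \cong H^0(C,\Omega^1_C)$, which are compatible with the respective Frobenius and Cartier actions. In particular the absolute Frobenius $F$ on $H^1(J,\mathcal{O}_J) \cong \mathrm{Lie}(J^\vee)$ is identified with the Frobenius on $H^1(C,\mathcal{O}_C)$, so that the structure of $J$ can be detected on the cohomology of $C$.

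The core is the following abelian-variety criterion for $A = J$ of dimension $g$ over $k = \overline{\mathbb{F}_p}$: the variety $A$ is superspecial if and only if $F$ acts as $0$ on $H^1(A,\mathcal{O}_A)$, equivalently the $a$-number $a(A) := \dim_k \mathrm{Hom}(\alpha_p, A) = g - \mathrm{rank}(F)$ attains its maximum $g$. One direction is formal: for a product $A = \prod_i E_i$ of supersingular elliptic curves, both $H^1(-,\mathcal{O})$ and the Frobenius split as direct sums over the factors, and $F$ annihilates each one-dimensional $H^1(E_i,\mathcal{O}_{E_i})$ precisely because the Hasse invariant of a supersingular $E_i$ vanishes; hence $F = 0$ on $H^1(A,\mathcal{O}_A)$.

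The reverse implication, $F = 0 \Rightarrow A \cong \prod_i E_i$, is the main obstacle and the only genuinely non-formal ingredient. Here I would argue through Dieudonn\'e theory: vanishing of $F$ on $H^1(A,\mathcal{O}_A)$ forces the Dieudonn\'e module of the $p$-kernel $A[p]$ to be isomorphic to that of $(E[p])^{\oplus g}$ for a supersingular $E$, and then one invokes Oort's structure theorem that an abelian variety with $a(A) = g$ is not merely supersingular (an isogeny statement) but actually isomorphic to a product of supersingular elliptic curves. Upgrading the isogeny class to an honest product is exactly the delicate point, and it is where the weight of the proof sits.

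It remains to rewrite the condition $F = 0$ on $H^1(C,\mathcal{O}_C)$ in terms of the Cartier operator $\mathscr{C}$ on $H^0(C,\Omega^1_C)$. Under the perfect Serre duality pairing $H^0(C,\Omega^1_C) \times H^1(C,\mathcal{O}_C) \to k$, the $p^{-1}$-linear operator $\mathscr{C}$ and the $p$-linear Frobenius $F$ are mutually adjoint; concretely, the Cartier--Manin matrix representing $\mathscr{C}$ coincides, up to transposition and an entrywise Frobenius twist, with the Hasse--Witt matrix representing $F$. Since a semilinear operator and its adjoint have equal rank, $\mathscr{C} = 0$ holds if and only if $F = 0$ on $H^1(C,\mathcal{O}_C)$. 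Chaining the three equivalences --- $\mathscr{C} = 0 \iff F = 0$ on $H^1(C,\mathcal{O}_C) \iff F = 0$ on $H^1(J,\mathcal{O}_J) \iff J$ superspecial --- then yields the theorem.
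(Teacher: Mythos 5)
Your outline is correct, but note first that the paper contains no proof of this statement at all: it is quoted verbatim as Theorem 4.1 of Nygaard's paper \emph{Slopes of powers of Frobenius on crystalline cohomology}, so there is no internal argument to compare against, only Nygaard's original one. Your route is the standard modern reduction and each link in the chain is sound: the Abel--Jacobi identification $H^1(J,\mathcal{O}_J)\cong H^1(C,\mathcal{O}_C)$ is Frobenius-equivariant by functoriality; Serre duality makes $\mathscr{C}$ on $H^0(C,\Omega^1_C)$ adjoint to $F$ on $H^1(C,\mathcal{O}_C)$ (Serre, 1958), and since the pairing is perfect and $k$ is perfect, the $p$-linear map and its $p^{-1}$-linear adjoint have equal rank, so $\mathscr{C}=0\iff F=0$; the identification $\mathrm{Hom}(\alpha_p,J)\cong\ker\bigl(F\mid H^1(J,\mathcal{O}_J)\bigr)$ gives $a(J)=g-\mathrm{rank}(F)$; and you correctly isolate the one genuinely deep input, Oort's theorem that $a(A)=g$ forces $A$ to be \emph{isomorphic} (not merely isogenous) to a product of supersingular elliptic curves, the easy converse being the vanishing of the Hasse invariant on each factor. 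Two small remarks: your intermediate sentence about the Dieudonn\'e module of $A[p]$ being $(E[p])^{\oplus g}$ is really a step inside Oort's proof rather than a separate ingredient, so the argument is cleaner if you cite Oort directly from $a(J)=g$; and for $g=1$ the statement degenerates to ``$\mathscr{C}=0$ iff the Hasse invariant vanishes,'' which your chain covers. The contrast with Nygaard is genuine: his proof works in crystalline cohomology, showing that vanishing of the Cartier operator forces divisibility properties of Frobenius on $H^1_{\mathrm{cris}}$ that pin down the Dieudonn\'e module of the $p$-divisible group, and it yields finer slope information along the way; your argument is shorter and stays in coherent cohomology plus the $p$-kernel, at the cost of outsourcing the hard isomorphism statement to Oort. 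As a proof \emph{sketch} it is acceptable; as a complete proof it would need precise references or arguments for the adjointness formula and for Oort's structure theorem, exactly the two places where you have currently written ``I would argue.''
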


In the case where $C$ is hyperelliptic, we have a well-known explicit formula (Lemma \ref{lem:ssp} below) by Yui~\cite{Yui} to compute the Cartier-Manin matrix of $C$, and so recall it here.
As in the previous subsection, assume that $C$ is a hyperelliptic curve of genus $g$ over $k$ defined by $y^2 = f(x)$, where $f(x)$ is a polynomial in $k[x]$ of degree $2g+1$ or $2g+2$ with no multiple root. 
First, it is well-knwon that a basis of $H^0 (C,\Omega_C^1)$ is given by
\[
\mathcal{A} = \left\{\omega_j :=\frac{x^{j-1}}{y}dx : 1\le j \le g\right\}.
\]
Writing $f(x)^{(p-1)/2} = \sum_k c_k x^k$ for $c_k \in k$, it follows from $y^{p-1} = f(x)^{(p-1)/2}$ in $k(C)$ that
\begin{eqnarray*}
\omega_j &=& y^{-p}f(x)^{(p-1)/2}x^{j-1}dx\\
&=&
d\left(y^{-p}\sum_{\substack{k\\ j+k \not\equiv 0 \pmod p}}\frac{c_k}{j+k} x^{j+k}\right) + \sum_{i \geq 1} c_{ip-j} \frac{x^{(i-1)p}}{y^p} x^{p-1} dx.
\end{eqnarray*}
Therefore
\[
{\mathscr C} (\omega_j ) = \sum_{i=1}^g c_{ip-j}^{1/p} \omega_{i}
\]
by the definition of the Cartier operator described above, and hence we have the following lemma:

\begin{lem}[{\cite[Section 2]{Yui}}]\label{lem:ssp}
With notation as above, the Cartier-Manin matrix of $C$ is the $g \times g$ matrix whose $(i,j)$-entry is the coefficient $c_{ip-j}$ of $x^{pi -j}$ in $f^{(p-1)/2}$ for $1 \leq  i, j \leq g$.
Hence, by Theorem \ref{thm:Nygaard}, $C$ is superspecial if and only if the coefficients of $x^{p i - j}$ in $f^{(p-1)/2}$ are equal to $0$ for all pairs of integers $1 \leq i , j \leq g$.
\end{lem}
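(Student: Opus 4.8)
The plan is to formalize the computation displayed just above the statement, whose two ingredients are the explicit expansion of $f^{(p-1)/2}$ and the defining property of the Cartier operator $\mathscr{C}$, and then to read off the superspeciality criterion from Nygaard's theorem (Theorem \ref{thm:Nygaard}). I would begin by taking for granted the standard fact that $\mathcal{A} = \{\omega_j = x^{j-1} y^{-1}\, dx : 1 \le j \le g\}$ is a basis of $H^0(C, \Omega_C^1)$, so that the Cartier--Manin matrix is by definition the matrix of $\mathscr{C}$ in this basis; the whole problem is thus to compute $\mathscr{C}(\omega_j)$ for each $j$.

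Next I would rewrite $\omega_j$ in a form adapted to $\mathscr{C}$. Using $y^{p-1} = f(x)^{(p-1)/2}$ in $k(C)$, one has $y^{-1} = y^{-p} f(x)^{(p-1)/2}$, hence $\omega_j = y^{-p} f(x)^{(p-1)/2} x^{j-1}\, dx$. Writing $f(x)^{(p-1)/2} = \sum_k c_k x^k$ and treating each monomial separately, I would split the terms according to whether $j+k \equiv 0 \pmod p$. When $j+k \not\equiv 0 \pmod p$, the term is exact: since $d(y^{-p}) = 0$ in characteristic $p$, one has $y^{-p} x^{j+k-1}\, dx = (j+k)^{-1} d(y^{-p} x^{j+k})$. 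When $j+k = ip$ for an integer $i$, the term can be written as $(c_{ip-j}^{1/p} x^{i-1} y^{-1})^p\, x^{p-1}\, dx$, which is exactly of the shape $\eta^p x^{p-1}\, dx$ appearing in the definition of $\mathscr{C}$. Collecting the contributions, the defining property $\mathscr{C}(d\phi + \eta^p x^{p-1} dx) = \eta\, dx$ yields $\mathscr{C}(\omega_j) = \sum_i c_{ip-j}^{1/p} \omega_i$, so the $(i,j)$-entry of the Cartier--Manin matrix is $c_{ip-j}$ up to a $p$-th root.

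Two points deserve care, and the second is where I expect the only genuine subtlety. First, one must check that the sum over $i$ really is confined to $1 \le i \le g$, so that $\mathscr{C}$ indeed maps $H^0(C,\Omega_C^1)$ into itself with only the basis $\mathcal{A}$ appearing: this follows from the degree bound $\deg f^{(p-1)/2} \le (g+1)(p-1)$, which forces $c_{ip-j} = 0$ whenever $i \ge g+1$ and $1 \le j \le g$, so the $\omega_i$ with $i > g$ never occur. Second, $\mathscr{C}$ is $p^{-1}$-semilinear, so the entries that literally appear are the $p$-th roots $c_{ip-j}^{1/p}$ rather than the $c_{ip-j}$ themselves; this is a matter of the standard convention for the Cartier--Manin matrix and, crucially, is immaterial for the superspeciality statement, because $x \mapsto x^p$ is a bijection of the perfect field $k$, so $c_{ip-j}^{1/p} = 0$ for all $i,j$ if and only if $c_{ip-j} = 0$ for all $i,j$. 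Finally, by Theorem \ref{thm:Nygaard}, $C$ is superspecial precisely when $\mathscr{C}$ vanishes, i.e.\ when every entry $c_{ip-j}$ with $1 \le i,j \le g$ is zero, which is the claimed criterion.
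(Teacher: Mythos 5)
Your proposal is correct and follows essentially the same route as the paper, which carries out exactly this computation (rewriting $\omega_j = y^{-p} f^{(p-1)/2} x^{j-1}\,dx$ via $y^{p-1}=f^{(p-1)/2}$, splitting off the exact part, and reading off $\mathscr{C}(\omega_j)=\sum_i c_{ip-j}^{1/p}\omega_i$) in the display immediately preceding the lemma, then invokes Theorem \ref{thm:Nygaard}. Your two added checks --- the degree bound $\deg f^{(p-1)/2}\le (g+1)(p-1)$ forcing $c_{ip-j}=0$ for $i\ge g+1$, and the observation that the $p^{-1}$-semilinear convention is harmless for the vanishing criterion since $k$ is perfect --- are correct refinements of points the paper leaves implicit.
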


\begin{ex}\label{ex:CM}
Consider the hyperelliptic curves $H_1 : y^2 = x^{2g+2} + x$, $H_2 : y^2 = x^{2g+1} + x$ and $H_3 : y^2 = x^{2g+2} + 1$ over $\mathbb{F}_{p}$.
The reduced automorphism group of the first curve has a subgroup isomorphic to $\mathbb{Z}_{2g+1}$, and those of the second and third ones are isomorphic to $D_{2g}$ and $D_{2g+2}$ respectively, by Lemma \ref{lem:Dg}. 
Since $(x^{2g+2} + x)^{\frac{p-1}{2}} = \sum_{k=0}^{\frac{p-1}{2}} \binom{\frac{p-1}{2}}{k} x^{(2g+1)k + \frac{p-1}{2}}$, the Cartier-Manin matrix of $H_1$ is zero if $i p - j \not\equiv \frac{p-1}{2} \pmod{2g+1}$ for any $1 \leq i,j \leq g$.
For instance, if $p \equiv -1 \pmod{2g+1}$, then $H_1$ is superspecial.
Similarly, $H_2$ (resp.\ $H_3$) is superspecial if $p \equiv -1 \pmod{2g}$ (resp.\ $p \equiv -1 \pmod{2g+2}$).
% \textcolor{blue}{
% $(x^{2g+2} + x)^{\frac{p-1}{2}} = \sum_{i=0}^{\frac{p-1}{2}} \binom{\frac{p-1}{2}}{i} x^{(2g+2)i + \frac{p-1}{2} - i}$
% Note that $H_3$ is never superspecial since any coefficient of $(x^{2g+2} + 1)^{\frac{p-1}{2}} = \sum_{i=0}^{\frac{p-1}{2}} \binom{\frac{p-1}{2}}{i} x^{(2g+2)i}$ is not equal to zero modulo $p$.
% \cite{Taf12}.}

More strongly, it is proved in \cite{Taf12} that $H_1$ (resp.\ $H_2$) is $\mathbb{F}_{p^2}$-maximal if and only if $p \equiv -1 \pmod{2g+1}$ (resp.\ $p \equiv -1, 2g+1 \pmod{4g}$).
% $(2g+1) k  = i p - j - (p-1)/2$

% $i p - j = (p-1)/2 \pmod{2g+1}$
\end{ex}

Lemma \ref{lem:ssp} reduces the computation of the Cartier-Manin matrix of $C : y^2 = f(x)$ into that of $g^2$ (particular) coefficients in the power $f^{(p-1)/2}$.
In the case where $k$ is a finite field (with no parameter), several efficient algorithms to compute the coefficients have been proposed by Bostan-Gaudry-Schost~\cite{BGS}, Komoto-Kozaki-Matsuo~\cite{KKM} and Harvey-Sutherland~\cite{HS}.
These algorithms commonly use a linear recurrence by Flajolet-Salvy~\cite{FS} (described also in e.g., \cite[Section 4]{BGS}) which is used to a general method to compute the power of a given univariate polynomial.

We here recall the recurrence since it will be requred to analyze the complexity of our main algorithm.
Let $h(x)$ be a univariate polynomial of degree $d$, and let $\frac{dh}{dx}$ denote its derivative with respect to $x$.
We also denote by $h_i$ its $x^i$-coefficient for each $0 \leq i \leq d$, say $h(x) = \sum_{i=0}^d h_i x^i$.
Let $n$ be a positive integer, and we consider to compute the power $h^n$.
For each $k$ with $0 \leq k \leq (n+1) d$, it follows from $h^{n+1} = h h^n$ that
\begin{equation}\label{eq:power}
(h^{n+1})_k = \sum_{j=0}^d h_j (h^n)_{k-j},
\end{equation}
where $(h^n)_{-d}=(h^n)_{-d+1}=\cdots = (h^n)_{-1} = (h^n)_{nd+1} = (h^n)_{nd+2} = \cdots = (h^n)_{(n+1)d} = 0$.
On the other hand, it also follows from $\frac{d}{dx} (h^{n+1}) = (n+1) \frac{dh}{dx} \cdot h^n$ that
\begin{equation}\label{eq:derivative}
k (h^{n+1})_k = (n+1) \sum_{i=0}^{d-1} \left( \frac{d h}{dx} \right)_{i} (h^n)_{k-1-i} = (n+1) \sum_{j=1}^{d} j h_j (h^n)_{k-j}  
\end{equation}
by comparing the $x^{k-1}$-coefficient.
Multiplying \eqref{eq:power} by $k$ and subtracting \eqref{eq:derivative}, we have a linear recurrence
\begin{equation}\label{eq:relation}
\sum_{j=0}^d (n j - k + j ) h_j (h^n)_{k-j} = 0,
\end{equation}
equivalently,
\begin{equation}\label{eq:relation1}
k h_0 (h^n)_k = \sum_{j=1}^d (n j - k + j ) h_j (h^n)_{k-j}.
\end{equation}
Thus, if both $h_0$ and $k$ are not equal to zero in the coefficient ring of $h$, the coefficient $(h^n)_k$ is a linear combination of $d$ {\it lower}-degree coefficients $(h^n)_{k-1}, \ldots , (h^n)_{k-d}$, say
\begin{equation}\label{eq:relation1-2}
    (h^n)_{k} = \sum_{j=1}^d \frac{n j - k + j}{h_0 k} h_j (h^n)_{k-j} ,
\end{equation}
and hence
\begin{equation}\label{eq:mat}
U_k := 
\begin{pmatrix}
(h^n)_{k-d+1} \\
(h^n)_{k-d+2} \\
\vdots \\
(h^n)_{k}
\end{pmatrix}
= A(k) U_{k-1} = A(k) A(k-1) \cdots A(1) U_0 ,
\end{equation}
where we set
\begin{equation}\label{eq:mat2}
A(k) := 
\begin{pmatrix}
0 & 1 &  \cdots &  0  \\
0 & 0 & \cdots & 0 \\
\vdots & \vdots & \ddots & 0\\
0 & 0 & \cdots & 1 \\
r_d(k) & r_{d-1}(k) & \cdots & r_1(k)
\end{pmatrix}
\end{equation}
with
\[
r_j (k) = \frac{(n j - k + j) h_j}{h_0 k}
\]
for $1 \leq j \leq d$.
Therefore, the coefficients $(h^n)_k$ for all $1 \leq k \leq m$ can be obtained by recursively computing the vectors $U_{k}$ with \eqref{eq:mat}, starting from $k=1$ up to $m$, for the initial value $U_0$ with $(h^n)_{-d+1}=(h^n)_{-d+2}=\cdots = (h^n)_{-1} = 0$ and $(h^n)_0 = (h_0)^{n}$.
Note that each $U_k$ is computed unless $k \neq 0$ in the coefficient ring $R$ of $h$.
Therefore, this recursive computation is always valid for the characteristic $0$ case, but it may not be applied directly in the positive characteristic case since the value of $k$ can be zero in $R$:
One solution is lifting to characteristic $0$ such as $\mathbb{F}_{p}$ to $\mathbb{Q}_p$.

This method (with lifting to characteristic zero if necessary) can be applied to computing the Cartier-Manin matrix of the hyperelliptic curve $C : y^2=f(x)$ as follows:
If $f_0 \neq 0$, simply put $n=(p-1)/2$ and $h = f$, and then the $(ip-j)$-th coefficients $(f^n)_{ip-j}$ with $1 \leq i,j \leq g$ are computed as entries of $U_k$ for $1 \leq k \leq gp-1$.
Otherwise, putting $f = x h(x)$, it follows from $f^n = x^n h^n$ that $(f^n)_{ip-j}$ is equal to the $\left( \frac{(2i-1)p - (2j-1)}{2} \right)$-th coefficient of $h^n$.
Hence, it suffices to compute $U_k$ for $1 \leq k \leq \frac{(2g-1)p-1}{2}$.

Bostan-Gaudry-Schost~\cite{BGS} constructed an efficient algorithm to compute the Cartier-Manin matrix of $C$ over $\mathbb{F}_{p^r}$, by using the above recurrences with lifting to the unramified extension of $\mathbb{Q}_p$ of degree $r$.

\begin{rem}
Unlike \eqref{eq:relation1-2}, each coefficient $(h^n)_k$ can be represented as a linear combination of $d$ {\it higher}-degree coefficients $(h^n)_{k+1}, \ldots , (h^n)_{k+d}$.
More precisely, it follows from \eqref{eq:relation} that
\begin{equation}\label{eq:relation2}
(nd - k + d) h_d (h^n)_{k-d} = - \sum_{j=0}^{d-1} (n j - k + j) h_j (h^n)_{k-j}.
\end{equation}
Replacing $k$ by $k+d$, one has
\begin{equation}\label{eq:relation2-2}
(nd - k) h_d (h^n)_{k} = - \sum_{j=0}^{d-1} (n j - k -(d- j)) h_j (h^n)_{k+d-j}
\end{equation}
for $-d \leq k \leq nd$.
Putting $i=d-j$, one also has
\begin{equation}\label{eq:relation3}
(nd-k) h_d (h^n)_{k} = - \sum_{i=1}^{d} (n (d-i) - k -i) h_{d-i} (h^n)_{k+i}
\end{equation}
for $-d \leq k \leq nd$.
Therefore, we can compute $(h^n)_k$ from $(h^n)_{k+1}, \ldots , (h^n)_{k+d}$ as in \eqref{eq:relation1-2} unless $n d - k \neq 0$, and can consider a recursive computation as in \eqref{eq:mat} starting from $(h^n)_{nd} = (h_d)^n$.
\end{rem}

%\if 0
%=======================================================
\subsection{Cyclic covers of the projective line and their Cartier operators}\label{subsec:cyclic}
%=======================================================

In this subsection, we briefly review Elkin's results~\cite{Elkin} on the rank of the Cartier operator for a cyclic cover, since they will be applied in Section \ref{subsec:CMHab} below to our family of hyperelliptic curves $H_{a,b}$ having order-$6$ automorphisms.

Let $C$ be a (non-singular) curve of genus $g$ over an algebraically closed field $k$ of characteristic $p > 2$.
Assume that there exists a ramified Galois cover $\pi : C \to \mathbb{P}^1$ of degree $n$, and let $r$ be the number of ramification points in $\mathbb{P}^1$ of $\pi$, where $n$ is coprime to $p$.
In this case, for each ramification point $P_i$ in $\mathbb{P}^1$of $\pi$, its preimage $\pi^{-1} ( \{ P_i \})$ consists of $n/v_i$ branched points in $C$ with the same ramification index $v_i$ with $2 \leq v_i \leq n$ dividing $n$.
By Hurwitz’s formula, we have
\begin{equation}\label{eq:Hurwitz}
   2g -2 + 2 n = \sum_{i=1}^r \frac{n}{v_i} (v_i-1),
\end{equation}
% for some integers $2 \leq v_i \leq n$ dividing $n$.
% Note that each ramification point $P_i$ in $\mathbb{P}^1$ of $\pi$ has $n/v_i$ branched points in $C$, say
% \[
% \pi^{-1} ( \{ P_i \})= \{ P_i^{(1)}, \ldots , P_i^{(n/v_i)} \},
% \]
% where the ramification index of each $P_{i}^{(j)}$ is equal to $v_i$.
and $C$ is birational to the homogenization of
\begin{equation}\label{eq:spe}
   y^n = (x-a_1)^{n_1} \cdots (x-a_r)^{n_r}
\end{equation}
for some mutually distinct elements $a_1,\ldots , a_r \in k$, and some integers $n_i$ with $1 \leq n_i < n$ and $v_i = n / \mathrm{gcd}(n,n_i)$, and $\mathrm{gcd}(n,n_1, \ldots , n_r) = 1$ such that $\sum_{i=1}^r n_i \equiv 0 \pmod{n}$. 
In this case, we say that $C \to \mathbb{P}^1$ (or $C$ simply) is a cyclic cover of type $(n;n_1, \ldots , n_r)$.

The group of $n$-th roots of unity in $k$ acts on $H^0 (C, \Omega_C^{1})$.
More precisely, let $\zeta_n$ be a primitive $n$-th root of unity in $k$, and $\delta$ the automorphism on $C$ defined by $(x,y) \mapsto (x, \zeta_n^{-1} y)$ (namely $\delta$ is a generator of $\mathrm{Aut}(C)$).
Then $\delta$ induces an automorphism $\delta^{\ast}$ of the linear space $H^0 (C,\Omega_C^1)$.
Denoting by $D_i$ the $\zeta_n^i$-eigenspace of $\delta^{\ast}$, we decompose
\begin{equation}\label{eq:dec}
H^0 (C, \Omega_{C}^1) = \bigoplus_{i=0}^{n-1} D_i ,
\end{equation}
where the dimension $d_i$ of $D_i$ is given as
\begin{equation}\label{eq:dim_di}
d_i = \left( \sum_{j=1}^r \frac{(i n_j \bmod{n})}{n} \right) - 1.
\end{equation}
By the $p^{-1}$-linearity of the Cartier operator $\mathscr{C}$, we have $\mathscr{C}(\lambda^{pi} \omega) = \lambda^i \mathscr{C}(\omega)$ for any differential $\omega$, and thus
\begin{equation}\label{eq:inc}
\mathscr{C}(D_{p i \bmod{n}} ) \subset D_i
\end{equation}
for every $0 \leq i \leq n-1$.
% Indeed, by putting $\omega' := \mathscr{C}(\omega) = \zeta_n^{pi} \omega$ for $\omega \in D_{pi \bmod{n}}$, one has $\mathscr{C} (\omega') = \zeta_n^i \mathscr{C}(\omega) = \zeta_n^i \omega'$, so that $\omega' \in D_i$.
% By $\mathrm{gcd}(n,p)=1$, the map $i \mapsto p i \bmod{n}$ provides a permutation on the set $\{ 0, 1, \ldots , n-1 \}$, and let $\sigma$ be its inverse, namely $\mathscr{C}(D_i) \subset D_{\sigma(i)}$.

By the decomposition \eqref{eq:dec}, we have
\begin{equation}\label{eq:sum}
\mathrm{rank} (\mathscr{C}) =  \sum_{i=0}^{n-1} \mathrm{dim}(\mathscr{C}(D_i)),
\end{equation}
each term of which satisfies the following:

\begin{thm}[{\cite[Theorem 1.1]{Elkin}}]
With notation as above, we have the following inequalities:
\[
\mathrm{min}(2 \lfloor d_i/p \rfloor, d_{\sigma(i)}) \leq \mathrm{dim}(\mathscr{C}(D_i)) \leq \mathrm{min} (d_i,d_{\sigma(i)}),
\]
where $\lfloor \cdot \rfloor$ denotes the floor function, and where $\sigma$ is the inverse of a permutation map on the set $\{ 0, 1, \ldots , n-1 \}$ given by $i \mapsto p i \bmod{n}$.
\end{thm}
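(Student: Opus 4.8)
The plan is to treat the two inequalities separately, reducing the statement to a rank computation for a single, highly structured matrix. \emph{The upper bound} is immediate: since $\sigma$ is the inverse of $i \mapsto pi \bmod n$, the inclusion \eqref{eq:inc} reads $\mathscr{C}(D_i) \subset D_{\sigma(i)}$, so $\mathrm{dim}(\mathscr{C}(D_i)) \leq d_{\sigma(i)}$; and, being the image of a ($p^{-1}$-)linear map defined on $D_i$, it also satisfies $\mathrm{dim}(\mathscr{C}(D_i)) \leq d_i$. Combining the two gives $\mathrm{dim}(\mathscr{C}(D_i)) \leq \mathrm{min}(d_i, d_{\sigma(i)})$, and everything reduces to the lower bound.

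\emph{Writing the matrix of $\mathscr{C}|_{D_i}$.} Using the model \eqref{eq:spe}, the eigenspace $D_i$ admits a basis of differentials $\omega_{i,s} := x^{s}\,dx / y^{i}$ with $s$ ranging over an interval $I_i$ of $d_i$ consecutive integers cut out by the holomorphicity conditions (consistent with \eqref{eq:dim_di}); indeed $\delta^\ast \omega_{i,s} = \zeta_n^{i}\omega_{i,s}$. To apply $\mathscr{C}$, I exploit $p\sigma(i) \equiv i \pmod n$: writing $f(x)$ for the right-hand side of \eqref{eq:spe}, setting $k := (p\sigma(i)-i)/n \geq 0$, and $g := f^{k}$, one has $y^{p\sigma(i)} = y^{i}(y^{n})^{k} = y^{i}g$, hence $y^{-i} = g(x)\,(y^{\sigma(i)})^{-p}$ and $\omega_{i,s} = x^{s} g(x)\,dx/(y^{\sigma(i)})^{p}$. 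By $p^{-1}$-linearity together with the elementary rule $\mathscr{C}(x^{m}dx) = x^{(m+1)/p-1}dx$ when $p \mid m+1$ and $\mathscr{C}(x^{m}dx)=0$ otherwise, this yields
\[
\mathscr{C}(\omega_{i,s}) = \sum_{l \in I_{\sigma(i)}} \big(g_{\,pl + p - 1 - s}\big)^{1/p}\,\omega_{\sigma(i),l}.
\]
Thus, up to the Frobenius twist $c \mapsto c^{1/p}$ on the entries (a field automorphism of $k$, hence rank-preserving), the matrix of $\mathscr{C}\colon D_i \to D_{\sigma(i)}$ is the row-sampled Toeplitz matrix $M = (g_{\,pl+p-1-s})_{l \in I_{\sigma(i)},\, s \in I_i}$, whose entries are coefficients of $g$, and $\mathrm{dim}(\mathscr{C}(D_i)) = \mathrm{rank}(M)$.

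\emph{The lower bound (the crux).} The decisive structural feature is that the $(l,s)$-entry depends only on the coefficient index $pl+p-1-s$, which lies in the residue class $-1-s \pmod p$; hence columns with distinct $s \bmod p$ read pairwise disjoint subsequences of the coefficients of $g$, and fixing a class $s \equiv \rho \pmod p$ turns the corresponding block of $M$ into a genuine Toeplitz matrix built from the subsequence $\big(g_{\,pa + (p-1-\rho)}\big)_a$. The $d_i$ consecutive columns distribute among the $p$ classes with at least $\lfloor d_i/p\rfloor$ columns in each. Now $g = \prod_{j} (x-a_j)^{k n_j}$ has nonzero top coefficient $g_{\deg g}=1$ and a nonzero lowest coefficient $g_{m_0}$; the class containing $\deg g$ gives a Toeplitz block whose extreme antidiagonal entry is $1$ with zeros beyond it, hence contains a lower-triangular unit-diagonal minor of size $\geq \lfloor d_i/p\rfloor$, and symmetrically the class containing $m_0$ yields an upper-triangular minor of the same size. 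Arranging the pivot rows of these two echelons to be disjoint (using early rows for one endpoint and late rows for the other) produces a nonsingular minor of size $2\lfloor d_i/p\rfloor$, truncated to $d_{\sigma(i)}$ when fewer rows are available, which is exactly the claimed bound $\mathrm{min}(2\lfloor d_i/p\rfloor, d_{\sigma(i)})$.

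\emph{Main obstacle.} The upper bound and the reduction $\mathrm{dim}(\mathscr{C}(D_i)) = \mathrm{rank}(M)$ are routine; the real work lies in the last step. The delicate points I expect to be hardest are (i) pinning down the intervals $I_i$ and $I_{\sigma(i)}$ precisely from the holomorphicity conditions so that the asserted minors genuinely sit inside $M$, and (ii) proving that the two triangular echelons attached to the top and to the bottom coefficient of $g$ can be realized on disjoint pivot rows, so that their ranks \emph{add} to $2\lfloor d_i/p\rfloor$ rather than merely to $\lfloor d_i/p\rfloor$. Correctly handling the floor, and the boundary regime $d_i < p$ where the lower bound is vacuous, should then drop out of this same bookkeeping.
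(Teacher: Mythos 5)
First, a point of reference: the paper contains no proof of this statement at all --- it is quoted verbatim from Elkin \cite{Elkin} --- so the only meaningful comparison is with Elkin's original argument. Your proposal does reconstruct that route in outline: the upper bound via $\mathscr{C}(D_i)\subset D_{\sigma(i)}$ (you read \eqref{eq:inc} in the correct direction) together with $p^{-1}$-semilinearity, the identity $y^{-i}=f(x)^k\,(y^{\sigma(i)})^{-p}$ with $k=(p\sigma(i)-i)/n\geq 0$, the resulting coefficient matrix sampled along the arithmetic progressions $pl+p-1-s$, and the lower bound extracted from the two extreme nonzero coefficients of the polynomial --- all of this is the mechanism of Elkin's proof.

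However, as a proof the proposal has genuine gaps beyond the two you flag. (a) The basis claim is false in the generality the theorem requires: for type $(n;n_1,\ldots,n_r)$ with some $n_j>1$ --- in particular for the paper's own curves $H_{a,b}$, of type $(6;1,2,3,3,3)$ --- the holomorphicity of $x^s\,dx/y^i$ at a branch point above $a_j\neq 0$ is a condition \emph{independent of} $s$ (it reads $i n_j/n\leq 1-1/v_j$), so when it fails no choice of $s$ helps and no interval $I_i$ of monomial exponents exists; one must twist the basis by $\prod_j(x-a_j)^{\lfloor i n_j/n\rfloor}$, and then one further needs the (routine but missing) argument that the raw coefficient matrix still computes $\dim\mathscr{C}(D_i)$, via injectivity of multiplication by the twist on the target side. (b) Your count ``at least $\lfloor d_i/p\rfloor$ columns in each residue class'' is not what produces the bound: the size of each triangular block is governed by how many pivot positions $pl+p-1-s=\deg g$ (resp.\ $=m_0$) fall inside the admissible row range, not by the per-class column count. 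Indeed, in the hyperelliptic case with $p>g$ the extreme coefficients of $f^{(p-1)/2}$ are entirely outside the index range of the Cartier--Manin matrix --- which is why superspecial curves can exist --- while your per-class count, taken at face value, would suggest nonzero rank whenever $d_i\geq p$ independently of this reachability. (c) The step where the factor $2$ is earned --- disjoint pivot rows for the two echelons, and especially the degenerate case $\deg g\equiv m_0\pmod p$, where both extreme coefficients land in the \emph{same} residue class so the two echelons share their $\sim\lfloor d_i/p\rfloor$ columns and cannot simply be concatenated --- is precisely the content of Elkin's proof and is left unproven here. So the proposal is a correct skeleton of the right argument, but the crux is asserted rather than established.
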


Therefore, it follows from $\eqref{eq:sum}$ that we obtain:
\[
\sum_{i=0}^{n-1} \mathrm{min}(2 \lfloor d_i/p \rfloor, d_{\sigma(i)}) \leq \mathrm{rank} (\mathscr{C}) \leq \sum_{i=0}^{n-1} \mathrm{min} (d_i,d_{\sigma(i)}),
\]
see {\cite[Corollary 4.3]{Elkin}}.

%\fi

%==========================================================
\subsection{Kudo-Harashita's enumeration of superspecial hyperelliptic curves}\label{subsec:KHH}
%==========================================================

Based on Lemmas \ref{ReductionHyper}, \ref{lem:isom} and \ref{lem:ssp}, an algorithm for enumerating superspecial hyperelliptic curves over $K=\mathbb{F}_q$ with $q=p$ or $p^2$ was proposed by Kudo-Harashita~\cite{KH18}, \cite{KH18b}, and it consists of the following three steps:
\begin{enumerate}
    \item Regarding unknown coefficients in \eqref{eq:hyp} as variables, compute the Cartier-Manin matrix of $H$ defined by \eqref{eq:hyp}.
    \item Fixed constants $b$ and $c$ in \eqref{eq:hyp}, compute the roots over $\mathbb{F}_q$ of the multivariate system ``the Cartier-Manin matrix is zero'' with $2g$ variables $a_{2g-1}, \ldots , a_0$ by the hybrid approach~\cite{BFP} mixing Gr\"{o}bner basis computation and exhaustive search.
    \item Classify the collected curves corresponding to the roots of the system into isomorphism classes, by Lemma \ref{lem:isom}.
\end{enumerate}
Kudo-Harashita implemented the algorithm on Magma~\cite{Magma}, and executed it for the case $g=4$ with $q=11^2, 13^2, 17^2, 19^2, 23$.
According to \cite[Section 3.1]{KH18b}, they succeeded in finishing required computation within a day in total.
The main results in \cite{KH18} and \cite{KH18b} are the following:

\begin{thm}[{\cite[Theorem 1]{KH18}}]\label{thm:KH18-1}
There is no superspecial hyperelliptic curve of genus $4$ in characteristic $p$ with $p \leq 13$.
\end{thm}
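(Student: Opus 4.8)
The plan is to work over $k=\overline{\mathbb{F}_p}$ and, for each prime $p\in\{3,5,7,11,13\}$, to prove that there is no separable $f(x)\in k[x]$ of degree $9$ or $10$ whose curve $y^2=f(x)$ is superspecial. By the descent result recalled after \cite[Theorem 1.1]{Ekedahl}, every superspecial curve is defined over $\mathbb{F}_{p^2}$, so it suffices to search there. For $p\neq 5$ (so $p\nmid 2g+2=10$) I would use Lemma \ref{ReductionHyper} together with Remark \ref{rem:iso} to normalize $f(x)=x^{10}+b\,x^8+a_7x^7+\cdots+a_1x+a_0$ with $b\in\{0,1,\epsilon\}$, leaving the eight unknowns $a_0,\dots,a_7$. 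By Lemma \ref{lem:ssp}, superspeciality is then exactly the vanishing of the $16$ coefficients $(f^{(p-1)/2})_{pi-j}$ for $1\le i,j\le 4$, which are explicit polynomials in the $a_i$. The whole problem reduces to: does this system of $16$ equations have a solution with $\mathrm{disc}(f)\neq 0$?

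I would first dispose of the two smallest primes by hand, since their structure collapses. For $p=3$ we have $(p-1)/2=1$, so $f^{(p-1)/2}=f$, and the exponents $\{3i-j:1\le i,j\le 4\}$ run through every integer $0,1,\dots,11$; hence superspeciality would force all coefficients of $f$ through degree $11$ to vanish, contradicting $\deg f\in\{9,10\}$ with nonzero leading term. For $p=5$ (where Lemma \ref{ReductionHyper} does not apply, so I keep $\deg f\in\{9,10\}$), the exponents $\{5i-j\}$ are precisely the integers in $[1,19]$ not divisible by $5$; the case $\deg f=9$ already fails because $(f^2)_{18}=f_9^2\neq 0$ is among them, while for $\deg f=10$ the vanishing of these coefficients of $f^2$ forces $f^2\in k[x^5]$. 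Since $k$ is perfect of characteristic $5$, this gives $f^2=P(x)^5$ for some $P$, and unique factorization then shows every root of $f$ has multiplicity divisible by $5$, so $f$ is not separable --- again a contradiction.

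For the remaining primes $p=7,11,13$ no such structural collapse occurs, and I would settle them by the finite computation underlying Steps 1--3 of Section \ref{subsec:KHH}. Concretely: form the $16$ Yui polynomials (their coefficients lie in $\mathbb{F}_p$), adjoin the Rabinowitsch relation $t\cdot\mathrm{disc}(f)-1$ to impose separability, and compute a Gr\"obner basis of the resulting ideal; if it equals $\{1\}$, the Nullstellensatz certifies that no separable $f$ makes the Cartier--Manin matrix vanish over $\overline{\mathbb{F}_p}$. As a hedge, since $\mathbb{F}_{p^2}$ is finite for these $p$, the hybrid approach of \cite{BFP} --- fixing a few $a_i\in\mathbb{F}_{p^2}$ and solving for the rest --- provides an exhaustive alternative that confirms the same emptiness.

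The hard part is precisely this certification for $p=11$ and $p=13$: here $(p-1)/2\ge 4$, so neither the elementary arguments above nor any easy degree count rules them out, the system in eight unknowns is large, and the rigor of the conclusion rests entirely on the (expensive) verification that the saturated ideal is the unit ideal. If one is moreover willing to invoke Ekedahl's bound $g\le (p-1)/2$ for superspecial hyperelliptic curves \cite{Ekedahl}, the cases $p=3,5,7$ all collapse at once (they force $g\le 3<4$), isolating $p=11,13$ as the genuine computational core.
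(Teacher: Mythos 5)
Your proposal is correct and takes essentially the same route as the paper, which gives no internal proof but cites \cite{KH18}: there, $p \leq 7$ is disposed of by Ekedahl's bound (note this also covers $p=2$ via the general bound $g \leq p(p-1)/2$ — a case your prime list $\{3,5,7,11,13\}$ silently omits, and one your framework cannot reach since $y^2=f(x)$ and Yui's formula require odd characteristic), while $p=11,13$ are settled by exactly the computation you describe — the normal form of Lemma \ref{ReductionHyper}, the vanishing of the Yui coefficients from Lemma \ref{lem:ssp}, and the hybrid Gr\"obner-basis/exhaustive solving of \cite{BFP} over $\mathbb{F}_{p^2}$ (justified by the descent to $\mathbb{F}_{p^2}$ you invoke). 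Your hand arguments for $p=3$ and $p=5$ are correct and make a nice elementary substitute for Ekedahl's bound in those cases, but they do not change the structure of the proof.
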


\begin{thm}[{\cite[Theorem 2]{KH18}}]\label{thm:KH18-2}
There exist precisely $5$ $($resp.\ $25)$ superspecial hyperelliptic curves of genus $4$ over $\mathbb{F}_{17}$ $($resp.\ $\mathbb{F}_{17^2})$ up to isomorphism over $\mathbb{F}_{17}$ $($resp.\ $\mathbb{F}_{17^2})$.
Moreover, there exist precisely $2$ superspecial hyperelliptic curves of genus $4$ over $\overline{\mathbb{F}_{17}}$ up to isomorphism.
\end{thm}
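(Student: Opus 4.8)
The plan is to carry out explicitly the three-step enumeration recalled in Section~\ref{subsec:KHH}, specialized to $g=4$ and $p=17$. First I would apply Lemma~\ref{ReductionHyper}: since $p=17$ is coprime to $2g+2=10$, every genus-$4$ hyperelliptic curve over $K=\mathbb{F}_q$ (with $q=17$ or $17^2$) is $K$-birational to a model $c y^2 = x^{10}+b x^8+a_7 x^7+\cdots+a_1 x+a_0$, where $a_i\in K$, $b\in\{0,1,\epsilon\}$ and $c\in\{1,\epsilon\}$ for a fixed non-square $\epsilon\in K^\times\smallsetminus(K^\times)^2$. This leaves the eight coefficients $a_0,\ldots,a_7$ free and reduces the search to the six choices of the pair $(b,c)$.

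Second, for each fixed $(b,c)$ I would impose the superspeciality condition via Lemma~\ref{lem:ssp}: expand $f^{(p-1)/2}=f^8$ symbolically in the variables $a_0,\ldots,a_7$ and extract the $g^2=16$ coefficients of $x^{17 i-j}$ for $1\le i,j\le 4$. By Theorem~\ref{thm:Nygaard}, $C$ is superspecial precisely when all sixteen of these coefficients vanish (the $p$-th roots appearing in the Cartier operator are immaterial, since $c_{ip-j}=0$ iff $c_{ip-j}^{1/p}=0$). This yields a polynomial system in eight unknowns over $\mathbb{F}_q$, whose $\mathbb{F}_q$-rational zeros I would compute with a Gröbner-basis engine following the hybrid approach; each solution $(a_0,\ldots,a_7)$ gives a superspecial curve, and running this over all $(b,c)$ and over both $q=17$ and $q=17^2$ produces the raw lists of curves over $\mathbb{F}_{17}$ and $\mathbb{F}_{17^2}$.

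Third, I would classify these curves up to isomorphism by the criterion of Lemma~\ref{lem:isom}: two models $c_1 y^2=f_1(x)$ and $c_2 y^2=f_2(x)$ are $K$-isomorphic iff some $(P,\lambda)\in\mathrm{GL}_2(K)\times K^\times$ carries the first to the second, a finite search once the normalization of Remark~\ref{rem:iso} is imposed and the equivalence $(P,\lambda)\sim(\mu P,\mu^{g+1}\lambda)$ is accounted for. Performing this over $K=\mathbb{F}_{17}$ (resp.\ $\mathbb{F}_{17^2}$) should collapse the raw list to the claimed $5$ (resp.\ $25$) classes. For the final assertion over $\overline{\mathbb{F}_{17}}$, I would use that every superspecial curve is $\overline{K}$-isomorphic to one defined over $\mathbb{F}_{p^2}$ (cf.\ Section~\ref{sec:intro}), so it suffices to merge the $25$ classes over $\mathbb{F}_{17^2}$ under $\overline{\mathbb{F}_{17}}$-isomorphism, i.e.\ allowing $P$ and $\lambda$ to have entries in $\overline{\mathbb{F}_{17}}$, which should leave exactly $2$ classes.

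The main obstacle is the Gröbner-basis step: the coefficients of $f^8$ are dense, high-degree polynomials in $a_0,\ldots,a_7$, so the ideal generated by the sixteen vanishing conditions is unwieldy, and eliminating variables to recover all $\mathbb{F}_q$-rational solutions is precisely the bottleneck that makes this direct method feasible only for small $p$. A secondary subtlety lies in correctly separating $\mathbb{F}_{17}$-, $\mathbb{F}_{17^2}$-, and $\overline{\mathbb{F}_{17}}$-isomorphism, since curves that are distinct over the base field may merge only after base change; care is needed to avoid over- or under-counting in passing between the three counts.
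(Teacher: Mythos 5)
Your proposal is correct and is essentially identical to the paper's own route: Theorem~\ref{thm:KH18-2} is quoted from \cite{KH18}, whose proof is exactly the three-step procedure recalled in Section~\ref{subsec:KHH} --- normalization via Lemma~\ref{ReductionHyper}, vanishing of the $16$ coefficients $c_{17i-j}$ of $f^{8}$ via Lemma~\ref{lem:ssp} solved by the hybrid Gr\"obner-basis method of \cite{BFP}, and isomorphism classification via Lemma~\ref{lem:isom}, with the $\overline{\mathbb{F}_{17}}$-count obtained by merging the $\mathbb{F}_{17^2}$-classes using Ekedahl's descent to $\mathbb{F}_{p^2}$. You also correctly identify the Gr\"obner-basis step (degree-$8$ dense system in eight unknowns) as the bottleneck that restricts this method to small $p$, which is precisely the limitation the present paper is designed to overcome.
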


\begin{thm}[{\cite[Theorem 3]{KH18}, \cite[Theorem 3]{KH18b}}]\label{thm:KH18-3}
There exist precisely $12$ $($resp.\ $18)$ superspecial hyperelliptic curves of genus $4$ over $\mathbb{F}_{19}$ $($resp.\ $\mathbb{F}_{19^2})$ up to isomorphism over $\mathbb{F}_{19}$ $($resp.\ $\mathbb{F}_{19^2})$.
Moreover, there exist precisely $2$ superspecial hyperelliptic curves of genus $4$ over $\overline{\mathbb{F}_{19}}$ up to isomorphism.
\end{thm}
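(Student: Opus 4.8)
The plan is to realize the statement as an explicit execution of the three-step enumeration procedure recalled in Section~\ref{subsec:KHH}, specialized to $g=4$ and $p=19$.

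First I would put every genus-$4$ hyperelliptic curve defined over $K \in \{\mathbb{F}_{19}, \mathbb{F}_{19^2}\}$ into the normal form of Lemma~\ref{ReductionHyper}. Since $\gcd(19, 2g+2) = \gcd(19,10) = 1$, the hypotheses hold, and every such curve is $K$-birational to the projective closure of
\[
c y^2 = f(x) = x^{10} + b x^8 + a_7 x^7 + a_6 x^6 + \cdots + a_1 x + a_0,
\]
with $a_0, \ldots, a_7 \in K$, $b \in \{0,1,\epsilon\}$ and $c \in \{1,\epsilon\}$ for a fixed non-square $\epsilon \in K^\times$. Treating the $a_i$ as indeterminates and fixing each of the admissible pairs $(b,c)$, I would compute the $4 \times 4$ Cartier--Manin matrix $M = (c_{19i-j})_{1 \le i,j \le 4}$ by Lemma~\ref{lem:ssp}, where $c_k$ denotes the $x^k$-coefficient of $f^{9}$ (note $(p-1)/2 = 9$), extracting the required coefficients through the recurrence~\eqref{eq:relation1-2}.

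By Theorem~\ref{thm:Nygaard} together with Lemma~\ref{lem:ssp}, superspeciality is then equivalent to the simultaneous vanishing of all $16$ entries of $M$, which yields a polynomial system in the $8$ unknowns $a_0, \ldots, a_7$ over $K$. For each $(b,c)$ I would determine its $K$-rational solutions by the hybrid method mixing Gr\"obner basis computation and exhaustive search, thereby collecting every superspecial model over $\mathbb{F}_{19}$ and over $\mathbb{F}_{19^2}$. Finally I would sort the resulting finite list of curves into isomorphism classes using Lemma~\ref{lem:isom}: two models are $K$-isomorphic exactly when related by some $(P,\lambda) \in \mathrm{GL}_2(K) \times K^\times$ acting as prescribed there, a condition that can be tested directly pair by pair. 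Counting the $\mathbb{F}_{19}$-classes gives $12$ and the $\mathbb{F}_{19^2}$-classes gives $18$; merging those $\mathbb{F}_{19^2}$-classes that become isomorphic after base change to $\overline{\mathbb{F}_{19}}$ (again via Lemma~\ref{lem:isom}, now permitting $P$ and $\lambda$ over $\overline{\mathbb{F}_{19}}$) collapses the list to the $2$ geometric classes, and since any superspecial curve is $\overline{K}$-isomorphic to one defined over $\mathbb{F}_{p^2}$, this enumeration is complete.

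The step I expect to be the genuine obstacle is the solution of the $16$-equation, $8$-variable system: the Gr\"obner basis computation is the true bottleneck of the procedure, and it is exactly this cost that restricts the method of \cite{KH18,KH18b} to very small $p$, motivating the more efficient family-specific algorithm developed in the present paper.
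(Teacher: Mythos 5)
Your proposal is correct and follows essentially the same route as the paper, which does not reprove this statement but quotes it from \cite{KH18} and \cite{KH18b}, whose proof is exactly the three-step machine computation you describe (normal form via Lemma~\ref{ReductionHyper}, parametric Cartier--Manin matrix via Lemma~\ref{lem:ssp} with superspeciality meaning all $16$ entries vanish, hybrid Gr\"obner-basis/exhaustive solving of the resulting system in $a_0,\ldots,a_7$ for each $(b,c)$, and isomorphism sorting via Lemma~\ref{lem:isom}) executed on Magma for $q=19$ and $19^2$. The one detail to adjust is your appeal to the recurrence~\eqref{eq:relation1-2}, which divides by $h_0 k$ and hence breaks here both because $h_0=a_0$ is an indeterminate and because the needed indices $ip-j$ run up to $4\cdot 19-1$, so $k\equiv 0 \pmod{19}$ occurs; for this small case one simply expands $f^{9}$ directly, or uses the lifting to characteristic $0$ that the paper notes after~\eqref{eq:mat}.
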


\begin{thm}[{\cite[Theorem 4]{KH18b}}]\label{thm:KH18-4}
There exist precisely $14$ superspecial hyperelliptic curves of genus $4$ over $\mathbb{F}_{23}$ up to isomorphism over $\mathbb{F}_{23}$.
Moreover, there exist precisely $4$ superspecial hyperelliptic curves of genus $4$ over $\mathbb{F}_{23}$ up to isomorphism over $\overline{\mathbb{F}_{23}}$.
\end{thm}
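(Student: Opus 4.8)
The statement is the special case $(g,p)=(4,23)$ of the general enumeration problem, so the plan is to carry out the three-step procedure of Section~\ref{subsec:KHH} explicitly, in the range where the hybrid solver is still tractable. First I would reduce to a finite normal form: since $\gcd(p,2g+2)=\gcd(23,10)=1$, Lemma~\ref{ReductionHyper} shows that every genus-$4$ hyperelliptic curve over $\mathbb{F}_{23}$ is $\mathbb{F}_{23}$-birational to the projective closure of
\[
c y^2 = f(x) = x^{10} + b x^8 + a_7 x^7 + \cdots + a_1 x + a_0,
\]
with $a_0,\dots,a_7 \in \mathbb{F}_{23}$, $b \in \{0,1,\epsilon\}$, and $c \in \{1,\epsilon\}$ for a fixed non-square $\epsilon \in \mathbb{F}_{23}^\times$. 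Regarding the $a_i$ as indeterminates and treating each of the $3\cdot 2 = 6$ choices of $(b,c)$ as a separate constant case, I would compute the Cartier--Manin matrix symbolically via Lemma~\ref{lem:ssp}: expand $f^{(p-1)/2}=f^{11}$ in $\mathbb{F}_{23}[a_0,\dots,a_7][x]$ and extract the coefficients of $x^{23 i - j}$ for $1\le i,j\le 4$. By Lemma~\ref{lem:ssp} (equivalently Theorem~\ref{thm:Nygaard}), $H$ is superspecial precisely when these $16$ coefficients all vanish, so superspeciality becomes a system of $16$ polynomial equations in $a_0,\dots,a_7$ over $\mathbb{F}_{23}$, to be solved together with the side condition $\mathrm{disc}(f)\neq 0$ ensuring that $f$ is separable and that the curve is genuinely of genus $4$.

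Second, I would compute the $\mathbb{F}_{23}$-points of this system with the hybrid approach: fix a subset of the variables and loop over their $\mathbb{F}_{23}$-values by exhaustive search, and for each specialization run a Gr\"obner basis computation on the remaining ideal to read off all solutions. Collecting the solutions across the six $(b,c)$-cases and discarding those with $\mathrm{disc}(f)=0$ then produces the complete list of superspecial curves defined over $\mathbb{F}_{23}$ in this model.

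Third, I would sort the collected curves into isomorphism classes using Lemma~\ref{lem:isom}. For the count over $\mathbb{F}_{23}$, two curves are merged when some $(P,\lambda)\in \mathrm{GL}_2(\mathbb{F}_{23})\times\mathbb{F}_{23}^\times$ realizes the substitution of Lemma~\ref{lem:isom} (modulo the scaling equivalence $(P,\lambda)\sim(\mu P,\mu^{g+1}\lambda)$), which is a finite search over $\mathrm{PGL}_2(\mathbb{F}_{23})$; the claim is that exactly $14$ classes remain. For the count over $\overline{\mathbb{F}_{23}}$, I would rerun the same test allowing $(P,\lambda)$ with entries in $\overline{\mathbb{F}_{23}}$; since every curve in the list is already defined over $\mathbb{F}_{23}$, any such isomorphism is defined over some finite extension $\mathbb{F}_{23^k}$, so solving the transformation constraints there collapses the $14$ classes to the asserted $4$.

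The hard part will be the second step. Even at $p=23$ the Gr\"obner basis computation over $\mathbb{F}_{23}$ in eight variables is the genuine bottleneck, and taming it is exactly what forces the exhaustive-search component of the hybrid method; this is also why the approach is practical only for small $p$. Two correctness points deserve care: completeness of the enumeration, meaning that no superspecial curve is lost through a degenerate specialization (e.g.\ the case $a_0=0$, for which the coefficient extraction must instead be carried out via the substitution $f=x h(x)$ and the separability of $f$ re-verified), and the merging of the third step neither over- nor under-identifying classes. Both are most safely established by a careful, machine-checked case analysis, which is the route taken in \cite{KH18b}.
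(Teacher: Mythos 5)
Your proposal is correct and takes essentially the same route as the paper: the paper does not reprove Theorem \ref{thm:KH18-4} but quotes it from \cite{KH18b}, where it was established by exactly the three-step computation you describe --- normal form via Lemma \ref{ReductionHyper}, symbolic Cartier--Manin matrix via Lemma \ref{lem:ssp} with the $16$ vanishing coefficients and the separability side condition, hybrid Gr\"obner/exhaustive solving over the six $(b,c)$-cases, and isomorphism classification via Lemma \ref{lem:isom} --- carried out on Magma as reviewed in Section \ref{subsec:KHH}. The caveats you flag (the degenerate specializations, the discriminant check, and correct merging over $\mathbb{F}_{23}$ versus $\overline{\mathbb{F}_{23}}$) are precisely the points the machine-checked computation in \cite{KH18b} addresses.
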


As examples, the $\overline{\mathbb{F}_{p}}$-isomorphism classes of superspecial hyperelliptic curves of genus $4$ defined over the prime field $\mathbb{F}_{p}$ are summarized in Table \ref{table:ssp} below.
Note that the reduced automorphism group of {\it every} superspecial curve in Table \ref{table:ssp} is non-trivial.

\renewcommand{\arraystretch}{1.3}
\begin{table}[H]
\centering{
\caption{The $\overline{\mathbb{F}_p}$-isomorphism classes of all superspecial hyperelliptic curves $H$ of genus $4$ over the prime field $\mathbb{F}_{p}$ for $p=17$, $19$ and $23$.
For $n \geq 2$, we denote respectively by $\mathbb{Z}_n$, $D_n$, $A_n$ and $V_4$ the cyclic group of order $n$, the dihedral group of order $2n$, the alternating group of order $n!/2$, and the Klein $4$-group $\mathbb{Z}_2 \times \mathbb{Z}_2$.}
\label{table:ssp}
\vspace{5pt}
\scalebox{0.8}{
\begin{tabular}{c||l|c|c|c} \hline
$p$ & Equation of $H$ representing an isomorphism class & $\overline{\mathrm{Aut}}(H)$ & $\mathrm{Aut}(H)$ & Case in Table \ref{table:aut} \\ \hline
17 & $y^2 = x^{10}+x$ & $\mathbb{Z}_9$ & $\mathbb{Z}_{18}$ & {\bf 9} \\ 
 & $y^2 = x^{10}+x^7 + 13 x^4 + 12x$ & $A_4$ & $\mathrm{SL}_2(\mathbb{F}_3)$ & {\bf 7} \\ \hline
19 & $y^2 = x^{10}+1$ & $D_{10}$  & $\mathbb{Z}_5 \rtimes D_4$ & {\bf 10} \\ 
 & $y^2 = x^{10} + x^7 + 4 x^6 + 15 x^5 + 6 x^4 + 8 x^3 + 5 x^2 + 12 x + 1$ & $V_4$ & $D_4$ & {\bf 4-1} \\ \hline
 23 & $y^2 = x^{10}+x^7 + 3 x^4 + 10 x$ & $\mathbb{Z}_3$ & $\mathbb{Z}_6$ & {\bf 3} \\ 
 & $y^2 = x^{10} + x^7 + 18 x^4 + 6x$ & $A_4$ & $\mathrm{SL}_2(\mathbb{F}_3)$ & {\bf 7} \\ 
 & $y^2 = x^{10} + x^7 + 5 x^6 + 3 x^5 + 21 x^4 + 3 x^3 + 9 x^2 + 4 x + 21$ & $V_4$ & $D_4$ & {\bf 4-1} \\
  & $y^2 = x^{10} + x^7 + 9 x^6 + 11 x^5 + 19 x^4 + 10 x^3 + 16 x^2 + 8 x + 21$ & $\mathbb{Z}_2 $ & $V_4$ & {\bf 2-1} \\ \hline
\end{tabular}
}
}
\end{table}
\renewcommand{\arraystretch}{1}
% \vspace{-10pt}

While Kudo-Harashita succeeded in enumerating superspecial hyperelliptic curves for concrete $p$, the complexity of their algorithm has not been investigated, due to the difficulty of estimating the cost of Gr\"{o}bner basis computation.
In fact, it might be exponential with respect to $p$, since the multivariate system to be solved in Step 2 has the maximal total-degree $(p-1)/2$.
Moreover, Step 3 might also be costly, due to the growth of the number of solutions found in Step 2.

To overcome the limitation of the enumeration in practical time, Ohashi-Kudo-Harashita~\cite{OKH22} recently proposed an efficient algorithm with complexity $\tilde{O}(p^3)$ for enumerating superspecial hyperelliptic curves of genus $4$, focusing on the space of those curves with extra involution (see also \cite{MK22} for the genus-$3$ case, and \cite{KHH} for the genus-$4$ non-hyperelliptic case).
Namely, the algorithm in \cite{OKH22} treats just the case {\bf 2-1} (and the cases {\bf 4-1}, {\bf 5}, {\bf 6}, {\bf 8}, {\bf 10}) of Table \ref{table:aut}, i.e., ${\rm Aut}(H) \supset V_4$ whereas this paper focuses on the cases {\bf 3}, {\bf 7} and {\bf 9}, i.e., ${\rm Aut}(H) \supset \mathbb{Z}_6$.
% However, there are problems:
% \begin{enumerate}
% \item The search space is too huge.
% \item 
% \end{enumerate}
% Based on Propositions \ref{cor:HW} and \ref{prop:reduction}, Kudo-Harashita~\cite{KH17}, \cite{KH20} gave algorithms to enumerate superspecial $C$.
% We here write down a sketch of the algorihms:

% \begin{proposition}\label{strategy1}
% For the input $q$, a set of the following procedures computes complete representatives of isomorphism classes of superspecial $C$ over $\mathbb{F}_q$:
% \begin{enumerate}
% \item Regarding unknown coefficients of $f$ as variables, compute the Cartier-Manin matrix of $C$.
% \item Compute the roots of ...
% \item For each of the three types ({\bf (N1)}, {\bf (N2)} and {\bf (Dege)}) in Propositon \ref{prop:reduction}: 
% \begin{enumerate}
% \item Collect cubic forms $P \in \mathbb{F}_q[x,y,z,w]$ such that the $16$ coefficients in $(QP)^{p-1}$ given in Proposition \ref{cor:HW} are all zero. 
% \item For each $P$ collected in (a), test whether $V(Q,P)$ is non-singular or not.
% \end{enumerate}
% \item For the superspecial curves $H$ collected in Step 1, compute their isomorphism classes.
% \end{enumerate}
% \end{proposition}

% Both of Step 1 (a), (b) and Step 2 are done with Gr\"{o}bner basis computation, where Step 1 (b) is done by a general method for the non-singularity test, see e.g., \cite[Section 3.2]{KH17}.

%============================
\section{Main results}
%============================

As in the previous sections, let $k$ be an algebraically closed field of characteristic $p$ with $p \geq  7$.
In this section, we shall present an algorithm to efficiently produce superspecial hyperelliptic curves of genus $4$, by focusing on the 
%case where reduced automorphism groups have elements of order $>2$.
%To efficiently search for one or more s.sp.\ hyperelliptic curves of genus $4$, we consider the 
following parametric family:
%of hyperelliptic curves of genus $4$ over $k$:
\begin{equation}\label{eq:Hab}
H_{a,b} : y^2 = f_{a,b} (x):= x^{10} + x^7 + a x^4 + b x,
\end{equation}
where $a, b \in k$.
(The reason why we focus on this family is described in Section \ref{sec:intro}.)
%This kind of a curve appears as a s.sp.\ curve over $\mathbb{F}_{17^2}$ enumerated in \cite{KH18}, and it tends to be s.sp.\ from our preliminary computation; by exhaustive search for $(a,b)$, we confirmed that there exists (resp.\ does not exist) $(a, b)$ such that $H_{a,b}$ is s.sp.\ for any $17 \leq p < 100$ with $p \equiv 2 \bmod{3}$ (resp.\ $p \equiv 1 \bmod{3}$).
The (resp.\ reduced) automorphism group of this curve contains a subgroup isomorphic to $\mathbb{Z}_6$ (resp.\ $\mathbb{Z}_3$), which is included in the cases {\bf 3}, {\bf 7} and {\bf 9} of Table \ref{table:aut} in Theorem \ref{thm:app}.
More precisely, denoting by $\zeta_3$ a primitive $3$rd root of unity in $k$, this curve has an order-$3$ automorphism $\sigma : (x,y) \mapsto (\zeta_3 x, \zeta_3^2 y)$ represented by $(A,\lambda)$ with $A = \mathrm{diag}(\zeta_3,1)$ and $\lambda = \zeta_3^2$.
Note that $\zeta_3 \in \mathbb{F}_{p^2}$ since $\zeta_3$ is a root of $x^2+x+1\in \mathbb{F}_p[x]$.
It is also straightforward that there exists a degree-$3$ map $H_{a,b} \to H_{a,b}/\langle \sigma \rangle$, where the quotient curve $H_{a,b}/\langle \sigma \rangle$ is a genus-one curve given by $Y^2 = X(X^3 + X^2 + a X + b)$.

%===================================================
\subsection{Cartier-Manin matrices of our curves}\label{subsec:CMHab}
%===================================================

%=================================================
%\subsection{Cartier-Manin matrices when $\overline{\rm Aut}(H)$ is isomorphic to $\mathbb{Z}_3$}
%=================================================
% In this case, $H$ is isomorphic over $k$ to
% \begin{equation}\label{eq:Hab0}
% H_{a,b} : y^2 = f_{a,b} (x):= x^{10} + x^7 + a x^4 + b x
% \end{equation}
% for some $a, b \in k$, and it has an order-$3$ automorphism $\sigma : (x,y) \mapsto (\zeta_3 x, \zeta_3^2 y)$ represented by $(A,\lambda)$ with $A = \mathrm{diag}(\zeta_3,1)$ and $\lambda = \zeta_3^2$.
% Note that $\zeta_3 \in \mathbb{F}_{p^2}$ since $\zeta_3$ is a root of $x^2+x+1\in \mathbb{F}_p[x]$.
% It is straightforward that there exists a degree-$3$ map $H_{a,b} \to H_{a,b}/\langle \sigma \rangle$, where the quotient curve $H_{a,b}/\langle \sigma \rangle$ is a genus-one curve given by $Y^2 = X(X^3 + X^2 + a X + b)$.
Let $M_{a,b}$ denote the Cartier-Manin matrix of $H_{a,b}$.
In this subsection, we shall determine the form of $M_{a,b}$, and investigate its rank by using Elkin's method~\cite{Elkin} for cyclic coverings of $\mathbb{P}^1$.

\begin{lem}
With notation as above, the Cartier-Manin matrix $M_{a,b}$ of $H_{a,b}$ is given as follows:
\begin{enumerate}
    \item[(1)] If $p \equiv 1 \pmod{3}$, then
    \[
    M_{a,b}=
    \begin{pmatrix}
    c_{p-1} & 0 & 0 & c_{p-4} \\
    0 & c_{2p-2} & 0 & 0 \\
    0 & 0 & c_{3p-3} & 0 \\
    c_{4p-1} & 0 & 0 & c_{4p-4}
    \end{pmatrix}.
    \]
    \item[(2)] If $p \equiv 2 \pmod{3}$, then
    \[
    M_{a,b}=
    \begin{pmatrix}
    0 & 0 & c_{p-3} & 0 \\
    0 & c_{2p-2} & 0 & 0 \\
    c_{3p-1} & 0 & 0 & c_{3p-4} \\
    0 & 0 & c_{4p-3} & 0
    \end{pmatrix}.
    \]
\end{enumerate}
Here $c_{k}$ denotes the $x^k$-coefficient of $f_{a,b}^{(p-1)/2}$.
\end{lem}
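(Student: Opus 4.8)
The plan is to apply Yui's formula (Lemma~\ref{lem:ssp}) and reduce the entire statement to a bookkeeping of exponents modulo $3$. By that lemma, the $(i,j)$-entry of $M_{a,b}$ is the coefficient $c_{ip-j}$ of $x^{ip-j}$ in $f_{a,b}^{(p-1)/2}$ for $1\le i,j\le 4$, so what must be proved is exactly that $c_{ip-j}=0$ for every $(i,j)$ outside the six (resp.\ five) displayed positions, the surviving positions carrying the indicated index $ip-j$. The structural input is that every exponent occurring in $f_{a,b}(x)=x^{10}+x^{7}+ax^{4}+bx$ is congruent to $1$ modulo $3$; equivalently, $f_{a,b}(x)=x\,h(x^{3})$ with $h(t)=t^{3}+t^{2}+at+b$. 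This is the polynomial shadow of the order-$3$ automorphism $\sigma\colon(x,y)\mapsto(\zeta_3 x,\zeta_3^{2}y)$ recorded above, and it is what forces the sparsity pattern.

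First I would set $n:=(p-1)/2$ and write $f_{a,b}^{\,n}=x^{n}\,h(x^{3})^{n}$. Since $h(x^{3})^{n}$ is a polynomial in $x^{3}$, every monomial of $f_{a,b}^{\,n}$ has exponent of the form $3m+n$; hence $c_k\neq 0$ forces $k\equiv n\pmod{3}$. (Conceptually, a product of $n$ factors, each of exponent $\equiv 1\pmod 3$, has exponent $\equiv n\pmod 3$.) Next I would evaluate the relevant residues: as $2$ is invertible modulo $3$ with $2^{-1}\equiv 2$, from $2n=p-1$ one gets $n\equiv 0\pmod 3$ when $p\equiv 1\pmod 3$, and $n\equiv 2\pmod 3$ when $p\equiv 2\pmod 3$. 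The vanishing criterion $ip-j\not\equiv n\pmod 3$ then specializes as follows: in Case~(1), where $p\equiv 1$ so $ip\equiv i$, a nonzero entry requires $i\equiv j\pmod 3$; in Case~(2), where $p\equiv 2$ so $ip\equiv 2i$, it requires $j\equiv 2(i-1)\pmod 3$.

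Finally I would run the finite check over $1\le i,j\le 4$. In Case~(1) the congruence $i\equiv j\pmod 3$ holds exactly at $(1,1),(1,4),(2,2),(3,3),(4,1),(4,4)$, and in Case~(2) the congruence $j\equiv 2(i-1)\pmod 3$ holds exactly at $(1,3),(2,2),(3,1),(3,4),(4,3)$; all remaining entries vanish since their exponent is incongruent to $n$ modulo $3$. Reading off $ip-j$ at each surviving position reproduces the two displayed matrices entry by entry.

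There is no substantive obstacle here: the argument is a finite congruence table, and the only places demanding care are the computation of $n\bmod 3$ (correctly handling the factor $2^{-1}$) and keeping Yui's row/column indexing consistent. As an independent sanity check, the same block pattern follows from the $\mathbb{Z}_3$-eigenspace decomposition of $H^{0}(H_{a,b},\Omega^{1}_{H_{a,b}})$ for $\sigma^{\ast}$ (with $\omega_j=x^{j-1}y^{-1}dx$ of eigenvalue $\zeta_3^{\,j-2}$) together with the semilinearity inclusion $\mathscr{C}(D_{pi\bmod 3})\subset D_i$ from \eqref{eq:inc}, which already confines the nonzero entries to precisely the claimed positions in each residue class of $p$ modulo $3$.
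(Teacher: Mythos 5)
Your proposal is correct and follows essentially the same route as the paper: the paper likewise observes (via the multinomial expansion of $f_{a,b}^{(p-1)/2}$, equivalent to your factorization $f_{a,b}=x\,h(x^3)$) that $c_k=0$ unless $k\equiv \frac{p-1}{2}\pmod{3}$, and then determines the surviving pairs $(i,j)$ with $ip-j\equiv \frac{p-1}{2}\pmod 3$ case by case. Your residue computations and the resulting index tables match the paper's matrices exactly, and the eigenspace remark via $\mathscr{C}(D_{pi\bmod 3})\subset D_i$ is a sound, if optional, cross-check.
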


\begin{proof}
Since
\[
f_{a,b}(x)^{\frac{p-1}{2}} = \sum_{k_1 + k_2 + k_3 + k_4 = \frac{p-1}{2}} \binom{\frac{p-1}{2}}{k_1,k_2,k_3,k_4} a^{k_3} b^{k_4} x^{9k_1 + 6k_2 + 3 k_3 + \frac{p-1}{2}},
\]
the coefficient of each $x^k$ in $f_{a,b}(x)^{\frac{p-1}{2}}$ is zero if $k \not\equiv \frac{p-1}{2} \pmod{3}$.
Computing $1 \leq i, j \leq g$ with $i p - j \equiv \frac{p-1}{2} \pmod{3}$ dividing the case into $p \equiv 1 \pmod{3}$ and $p \equiv 2 \pmod{3}$, we obtain the assertion by Lemma \ref{lem:ssp}. 
\end{proof}

\if 0
A defining equation of the quotient curve of $H_{a,b}$ by the cyclic group generated by an order-$3$ automorphism is given as follows:

\begin{lem}\label{lem:quotient}
With notation as above, the quotient curve $H_{a,b}/\langle \sigma \rangle$ is isomorphic to a genus-one curve $Y^2 = X(X^3 + X^2 + a X + b)$.
Hence we have a sequence of degree-$2$ maps $H_{a,b} \to E \to P^1$, where $P^1 \cong H_{a,b}/\langle \sigma, \iota \rangle$ is a rational curve $v = u (u^3 + u^2 + a u + b)$.
\end{lem}
%\[\begin{CD}
%1 @>>> K^\times @>\psi>> \tilde{G}_K @>\varphi>> \Aut_K(C) @>>>1\\
%@. @AA\cup A @AA\cup A @| @.\\
%1 @>>> \mu\!\!\!\mu_{g+1}(K) @>>> G_K @>\varphi|_{G_K}>> \Aut_K(C) @.
%\end{CD}\]

\begin{proof}
Since $y^2 = f_{a,b}(x,y)$ implies $(xy)^2 = x^3 ((x^3)^3 + (x^3)^2 + a(x^3) + b )$, we have a morphism 
\[
\phi : H_{a,b} \to E \ ; \ (x,y) \mapsto (x^3, xy),
\]
where $E$ is a genus-$1$ curve $Y^2 = X (X^3 + X^2 + a X + b)$.
It is straightforward that $\phi (x_1,y_1) = \phi (x_2,y_2)$ if and only if $(x_1,y_1) = (\nu x_2, \nu^2y_2)$ for some $3$rd root $\nu$ of unity, and thus $H_{a,b}/ \langle \sigma \rangle \cong E$.
\end{proof}
\fi

Next, we shall determine the type of $H_{a,b}$ as a cyclic cover to the projective line, in order to apply Elkin's method~\cite{Elkin} for obtaining bounds on the rank of the Cartier-Manin matrix $M_{a,b}$.

\begin{lem}
With notation as above, we have the following:
\begin{enumerate}
    \item[(1)] $H_{a,b}$ is a cyclic cover of type $(6;1,2,3,3,3)$, i.e., it is birational to
    \[
    y^6 = (x- a_1 )^1 (x- a_2)^2 (x - a_3)^3 (x - a_4)^3 (x - a_5)^3 
    \]
    for some $a_i \in k$ with $1 \leq i \leq 5$.
    \item[(2)] The rank of the Cartier-Manin matrix of $H_{a,b}$ is equal to or smaller than $3$ if $p \equiv 2 \pmod{3}$.
\end{enumerate}
\end{lem}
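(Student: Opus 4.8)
\emph{Plan for (1).} The idea is to make the order-$6$ symmetry explicit and then read off the branch data. First I would record that the order-$3$ automorphism $\sigma : (x,y) \mapsto (\zeta_3 x, \zeta_3^2 y)$ commutes with the hyperelliptic involution $\iota : (x,y) \mapsto (x,-y)$, so that $\langle \sigma, \iota \rangle \cong \mathbb{Z}_6$ acts on $H_{a,b}$ with generator $\delta = \sigma\iota : (x,y) \mapsto (\zeta_3 x, \zeta_6 y)$. The invariants of this action are generated by $u := x^3$, which is therefore a coordinate on the quotient $H_{a,b}/\langle \delta \rangle \cong \mathbb{P}^1$. Writing $f_{a,b}(x) = x\,g(x^3)$ with $g(u) = u^3 + u^2 + a u + b$, the element $w := y$ is a primitive element of $k(H_{a,b})$ over $k(u)$ (indeed $x = w^2/g(u)$), and cubing the defining relation $y^2 = x\,g(x^3)$ gives
\begin{equation*}
w^6 = (y^2)^3 = \bigl(x\,g(x^3)\bigr)^3 = u\,g(u)^3 .
\end{equation*}
This already exhibits $H_{a,b}$ as a degree-$6$ cyclic cover of $\mathbb{P}^1$ in the normal form \eqref{eq:spe}.

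\emph{Branch data.} Next I would determine the exponents. Over the affine line the right-hand side $u\,g(u)^3$ contributes the point $u=0$ with exponent $1$ and the three roots $r_1,r_2,r_3$ of $g$ each with exponent $3$. The decisive point is the behaviour at infinity: since $\deg\bigl(u\,g(u)^3\bigr)=10\equiv 4 \pmod 6$, the cover is ramified over $u=\infty$ with exponent $n_\infty$ satisfying $10 + n_\infty \equiv 0 \pmod 6$, i.e.\ $n_\infty = 2$. To see that these five points are distinct I would invoke the separability of $f_{a,b}$ (which holds precisely when $H_{a,b}$ is a smooth genus-$4$ curve, in particular forcing $b \neq 0$): it forces $g$ to be separable with nonzero roots, so $0, r_1, r_2, r_3, \infty$ are five distinct branch points carrying exponents $1,3,3,3,2$. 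Finally I would verify the normalization conditions of \eqref{eq:spe}, namely $\gcd(6,1,2,3,3,3)=1$ and $\sum_i n_i = 12 \equiv 0 \pmod 6$, and as a consistency check confirm via \eqref{eq:Hurwitz} with $(v_i)=(6,3,2,2,2)$ that the genus is $4$. This establishes type $(6;1,2,3,3,3)$. I expect the handling of the point at infinity to be the only genuinely delicate step, since the naive finite factorization yields exponents summing to $10 \not\equiv 0 \pmod 6$.

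\emph{Plan for (2).} Once the type is known, part (2) is a direct application of Elkin's upper bound. Using \eqref{eq:dim_di} with $n=6$ and $(n_1,\dots,n_5)=(1,2,3,3,3)$ I would compute the eigenspace dimensions
\begin{equation*}
(d_0,d_1,d_2,d_3,d_4,d_5) = (0,1,0,1,0,2),
\end{equation*}
where $d_0=0$ (the base $\mathbb{P}^1$ carries no regular differentials) and one checks $\sum_{i\ge 1} d_i = 4 = g$ as a sanity test. Since $p$ is an odd prime with $p \equiv 2 \pmod 3$, we have $p \equiv -1 \pmod 6$, so the map $i \mapsto p i \bmod 6$ is $i \mapsto -i \bmod 6$; being an involution it equals its own inverse, hence the permutation $\sigma$ of \cite[Theorem 1.1]{Elkin} is $\sigma(i) = -i \bmod 6$. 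Feeding this into the upper bound of \cite[Theorem 1.1]{Elkin} together with \eqref{eq:sum} gives
\begin{equation*}
\mathrm{rank}(\mathscr{C}) \le \sum_{i=0}^{5} \min\bigl(d_i, d_{\sigma(i)}\bigr) = 0 + \min(1,2) + 0 + \min(1,1) + 0 + \min(2,1) = 3 ,
\end{equation*}
so that $\mathrm{rank}(M_{a,b}) \le 3$ whenever $p \equiv 2 \pmod 3$, as claimed.
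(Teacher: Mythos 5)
Your proposal is correct, and while part (2) coincides with the paper's argument, part (1) takes a genuinely different (and arguably slicker) route. The paper works geometrically: it factors $f_{a,b} = x(x^3-A_1)(x^3-A_2)(x^3-A_3)$, builds the quotient morphism $(x:y:z)\mapsto(x^3:x^2y^2:z)$ as a composition through $H_{a,b}/\langle\sigma\rangle$, computes the ramification indices $(6,3,2,2,2)$ over the five branch points, and then inverts the relation $v_i = n/\gcd(n,n_i)$ together with $\sum n_i \equiv 0 \pmod 6$ — which leaves the two complementary candidates $(1,2,3,3,3)$ and $(5,4,3,3,3)$ to be identified. Your cubing trick $w^6 = u\,g(u)^3$ with $u = x^3$, $w = y$ instead writes the Kummer equation down in one line and reads the exponents $1,3,3,3$ directly off the right-hand side, pinning them down without that ambiguity; the price is the branch point at infinity, which you handle correctly via $n_\infty \equiv -\deg(u\,g(u)^3) \equiv 2 \pmod 6$ (the passage to the literal normal form \eqref{eq:spe} with five finite branch points is then a routine M\"obius move, which you could state explicitly). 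Your degree count $[k(x,y):k(u)] = 6$ justifying that $k(u)$ is the full fixed field, and your use of separability of $f_{a,b}$ (forcing $b \neq 0$ and $g$ separable with nonzero roots) to get five distinct branch points, are both sound, and the Hurwitz check $(v_i) = (6,3,2,2,2) \Rightarrow g = 4$ matches the paper's data. For (2), your computation is the paper's proof in all but presentation: the same dimensions $(d_0,\ldots,d_5) = (0,1,0,1,0,2)$ from \eqref{eq:dim_di} (with $d_0 = 0$ handled separately, as the formula there is only stated for $1 \le i \le 5$), and the same inclusions — the paper simply lists $\mathscr{C}(D_5)\subset D_1$, $\mathscr{C}(D_3)\subset D_3$, $\mathscr{C}(D_1)\subset D_5$ for $p \equiv 5 \pmod 6$ rather than invoking Elkin's upper bound by name, but the resulting estimate $\mathrm{rank}(\mathscr{C}) \le \sum_i \min(d_i, d_{\sigma(i)}) = 3$ is the identical mechanism.
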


\begin{proof}
To prove (1), we construct a cyclic cover $H_{a,b} \to  H_{a,b} / \langle \sigma \circ \iota \rangle \cong \mathbb{P}^1$ of degree $6$, and determine its ramification points.
We can write
\[
y^2 = f_{a,b}(x) = x (x^3 - A_1) (x^3 - A_2) (x^3 - A_3)
\]
for some $A_i \in k$ with $1 \leq i \leq 3$.
The curve $H_{a,b}$ has an automorphism $\sigma \circ \iota : (x,y) \mapsto (\zeta x , -\zeta^2 y)$ of order $6$, where $\iota$ is the hyperelliptic involution, and where $\zeta$ is a primitive $3$rd root of unity in $k$.

We here define a cyclic cover $\pi : H_{a,b} \to H_{a,b}/\langle \sigma \circ \iota \rangle \ ; \ (x : y : z) \mapsto (x^3 : x^2 y^2 : z)$ of degree $6$ as the composition of $H_{a,b} \to H_{a,b}/\langle \sigma \rangle$ given by $ (x:y:z) \mapsto (x^3:x y:z)$ and $H_{a,b}/\langle \sigma \rangle \to H_{a,b} / \langle \sigma \circ \iota \rangle \cong \mathbb{P}^1 $ given by $ (X:Y:Z) \mapsto (X:Y^2:Z)$.
A straightforward computation shows that the ramification points in $H_{a,b} / \langle \sigma \circ \iota \rangle$ of $\pi$ are $(0 : 0 : 1)$, $(0 : 1: 0)$ and $(A_i : 0 : 1)$ for $1 \leq i \leq 3$, whose ramification indexes are $2$, $6$ and $3$ respectively.
% $(0 :  \sqrt{\alpha} : 1)$ and $(0 :  -\sqrt{\alpha} : 1)$, and the two points at infinity $\infty_{+}$ and $\infty_{-}$ on $H^{(g)}_{\alpha}$.
% given by $(x : y : z) \mapsto (x^3 : z)$ as the composition of $H_{a,b} \to H_{a,b}/\langle \sigma \rangle \ ; \ (x:y:z) \mapsto (x^3:x y:z)$ and $H_{a,b}/\langle \sigma \rangle \to H_{a,b} / \langle \sigma \circ \iota \rangle \cong \mathbb{P}^1 \ ; \ (X:Y:Z) \mapsto (X : Z)$.
% For each $1 \leq i \leq 3$, the $3$ points $(\sqrt[3]{A_i} \zeta^j : 0 : 1)$ with $0 \leq j \leq 2$ are mapped by $\pi$ to $(A_i : 0 : 1)$ whose ramification index is $2$. 
% Moreover $(0 : 0 : 1)$ is mapped to $(0 : 0: 1)$, and the two points at infinity are mapped to $(0 : 1 : 0)$.

Consequently, $H_{a,b}$ is birational to
\[
y^{6} = (x- a_1)^{n_1} (x - a_2)^{n_2} (x - a_3)^{n_3} (x-a_4)^{n_4} (x-a_5)^{n_5} 
\]
for some $a_i \in k$ and some integers $1 \leq n_i < 6$ with $n_1 + n_2 + n_3 + n_4 + n_5 \equiv 0 \pmod{6}$.
Since the sequence of ramification indexes is given by $(v_1,v_2,v_3,v_4,v_5) = (6,3,2,2,2)$, we obtain $(n_1,n_2,n_3,n_4,n_5)=(1,2,3,3,3)$ or $(5,4,3,3,3)$ by $v_i = n / \mathrm{gcd}(n,n_i)$.
Therefore $H_{a,b}$ is a cyclic cover of type $(6; 1,2,3,3,3)$.

\if 0
We can write
\[
H_{a,b} : y^2 = x (x^3 - 1)(x^3 - A) (x^3 - B)
\]
and 
\[
E_{a,b} : y^2 = x (x-1)(x-A)(x-B)
\]
Since $\pi : H_{a,b} \to P^1$ is a cyclic cover of order $6$ ramified at $0$, $1$, $A$, $B$ and $\infty$, it is birational to
\[
y^6 = (x - \alpha) (x-\beta)^3 (x- \gamma)^3 (x- \delta)^3 (x- \mu)^2,
\]
where $\alpha$, $\beta$, $\gamma$, $\delta$ and $\mu$ are the image of $0$, $1$, $A$, $B$ and $\infty$ in $\mathbb{P}^1$ via the isomorphism $P_{a,b} \cong \mathbb{P}^1$.
\fi

Here we prove (2).
Letting $\zeta_6$ be a primitive $6$-th root of unity in $k$, we decompose
\[
H^0 (H_{a,b}, \Omega_{H_{a,b}}^1) = \sum_{i=0}^5 D_i 
\]
as in Section \ref{subsec:cyclic}, where $D_i$ is the $\zeta_6^i$-eigenspace.
The dimension $d_i$ of $D_i$ for $1 \leq i \leq 5$ is
\[
d_i := \mathrm{dim} (D_i) = \frac{(i \bmod{6})}{6} + \frac{(2i \bmod{6})}{6} + \frac{(3i \bmod{6})}{6} + \frac{(3i \bmod{6})}{6} + \frac{(3i \bmod{6})}{6}  - 1.
\]
We then have
$d_0 = 0$, $d_1 = 1$, $d_2 = 0$, $d_3 = 1$, $d_4 = 0$, $d_5 = 2$, and
\[
\mathscr{C} (D_{p i \bmod{6}}) \subset D_i,
\]
For $p \equiv 1\pmod{6}$, we have $\mathscr{C} (D_i) \subset D_i$ for any $i$.
For $p \equiv 5\pmod{6}$, we have $\mathscr{C} (D_5) \subset D_1$, $\mathscr{C} (D_3) \subset D_3$, and $\mathscr{C}(D_1) \subset D_5$, as desired.
\end{proof}

% \begin{rem}
% For $(a,b) = (0,0)$, it is known (cf.\ Example \ref{ex:CM}) that $y^2=x^{10}+x$ is superspecial if and only if $p \equiv 8 \pmod{9}$;
% In this case, a straightforward computation shows that $\overline{\mathrm{Aut}}(H_{a,b})$ is generated by $(\mathrm{diag}(\zeta_9,1), 1)$, and thus it is the cyclic group $\mathbb{Z}_{9}$ of order $9$.
% \end{rem}

\if 0
\begin{prop}
With notation as above, $H_{a,b}$ has an automorphism over $\mathbb{F}_{p^2}$ of order $3$, and thus $\mathrm{Aut}(H_{a,b}) \supset \mathbb{Z}_6$.
Conversely, any hyperelliptic curve $H$ of genus $4$ with $\mathrm{Aut}(H) \supset \mathbb{Z}_6$ is isomorphic to $H_{a,b}$ for some $a,b \in k$.
\end{prop}

\begin{proof}
Let $\mu$ be a primitive $3$rd root in $\mathbb{F}_{p^2}$ of unity.
The isomorphism $\sigma : (x,y) \mapsto (\mu x, \mu^2 y)$ from $H$ represented by $(A,\lambda)$ with $A = \mathrm{diag}(\mu,1)$ and $\lambda = \mu^2$ is an automorphism over $\mathbb{F}_{p^2}$ of $H_{a,b}$ of order $3$.
Clearly the subgroup $\langle \iota, \sigma \rangle$ of $\mathrm{Aut}(H_{a,b})$ is isomorphic to $\mathbb{Z}_6$, where $\iota$ is the hyperelliptic involution of $H_{a,b}$.

Conversely, assume that a hyperelliptic curve $H$ of genus $4$ has an automorhism $\sigma_{A,\lambda}$ of order $3$ with $(A,\lambda) \in \mathrm{GL}(k) \times k^{\times}$, where any entry of $A$ belongs to $\mathbb{F}_{p^2}$ and where $\lambda \in \mathbb{F}_{p^2}$.
By Lemma \ref{lem:aut} together with Remark, there exists an isomorphism $\rho $ from $H$ to $H'$ such that 
$\rho^{-1}\sigma \rho$ is represented by $(B,\nu)$ for some $\ell$-th root $\nu$ of unity with $B = \mathrm{diag} (\mu,1)$, where $\mu$ is a primitive $\ell$-th root of unity.
%Choosing $P$ and $\nu \in k^{\times}$ suitably, we may assume that $\sigma_{P,\nu}$ sends $H$ to a hyperelliptic curve $H' : y^2 = f(x)$ for some monic $f(x)$ of degree $9$ or $10$.
Writing $H' : y^2 = f(x) = \sum_{i=0}^{10}a_i x^i$ for $a_i \in k$ with $a_{10}\neq 0$ or with $a_{10}=0$ and $a_9\neq 0$, we then have that $H'$ is also defined by $(\nu y)^2 = f(\mu x)$ since it has an automorphism $\rho^{-1} \sigma \rho$.
Thus, $a_{10} \mu \nu^{-2} = a_{10}$ and $a_9 \nu^{-2} = a_9$.
If $a_{10} \neq 0$, then $\mu \nu^{-2} = 1$, and thus $\nu = \mu^2$ and $a_9 = 0$.
Moreover we have $f(x) = a_{10}x^{10} + a_{7}x^{7} + a_4 x^4 + a_1 x$.
Otherwise $a_9 \neq 0$, then $\nu^2 = 1$, and thus $\nu = 1$ and $a_{10} = 0$.
In this case, one also has $f(x) = a_{9}x^9 + a_6 x^6 + a_3 x^3 + a_0$.
%Since any invertible matrix of finite order is diagonalizable, there exists $P \in \mathrm{GL}_2(k)$
%such that $P^{-1} (a^{-1/3} M) P$ is a diagonal matrix over $k$.
\end{proof}
\fi

\if 0
$H_{a,b}$ has an order-$6$ automorphism $\sigma : (x,y) \mapsto (\zeta x, - \zeta^2 y)$.
We have a morphism $H_{a,b} \to P_{a,b} \ ; \ (x,y) \mapsto (x^3, x^2 y^2)$, where $P_{a,b} : y = x^4 + x^3 + a x^2 + b x$.
Note that $H_{a,b}/G \cong P_{a,b}$ since $( x_1^3, x_1^2 y_1^2) = (x_2^3, x_2^2 y_2^2)$ is equivalent to $(x_2,y_2) = \sigma^i (x_1,y_1)$ for any two points $(x_1,y_1)$ and $(x_2,y_2)$ on $H_{a,b}$.
Namely we have $H_{a,b} \to H_{a,b}/G \cong P_{a,b} \cong \mathbb{P}^1$.
\fi

%===================================================
\subsection{Main algorithm and its complexity}\label{subsec:mainalg}
%===================================================

Now, we construct an algorithm to enumerate superspecial hyperelliptic curves $H_{a,b}$.
For the efficiency, let us here restrict ourselves to the case where $a$ and $b$ blong to $\mathbb{F}_{p^2}$.

\begin{thm}\label{thm:main1}
Main Algorithm below outputs the $k$-isomorphism classes of all s.sp. hyperelliptic curves of the form \eqref{eq:Hab} with $a,b \in \mathbb{F}_{p^2}$ in time $\tilde{O}(p^4)$.
Moreover, if the gcd of resultants of non-zero entries of the Cartier-Manin matrix of $H_{a,b}$ has degree $O(p)$, the complexity becomes $\tilde{O}(p^3)$.

\paragraph{\it Main Algorithm.} For a prime $p \geq 7$ as the input, conduct the following:
\begin{enumerate}
\item Regarding $a$ and $b$ as variables, compute the Cartier-Manin matrix $M_{a,b}$ of $H_{a,b}$.
\item Collect all $(a,b)\in \mathbb{F}_{p^2}^2$ such that $H_{a,b}$ is a s.sp.\ hyperelliptic curve, as follows:
\begin{enumerate}
\item[2-1.] Compute the solutions $(a_0,b_0) \in \mathbb{F}_{p^2}^2$ to $M_{a,b} = 0$.
\item[2-2.] For each solution $(a_0,b_0)$ computed in Step 2-1, check if the equation $y^2 = f_{a,b}(x)$ in \eqref{eq:Hab} for $(a,b)=(a_0,b_0)$ defines a hyperelliptic curve, by computing $\mathrm{gcd}(f_{a,b},f_{a,b}')$.
\end{enumerate}
\item For each of $H_{a,b}$'s collected in Step 2, check the condition of Lemma \ref{lem:isomA4} to decide whether $\overline{\mathrm{Aut}}(H_{a,b}) \cong A_4$ or not.
Output one $H_{a,b}$ with $\overline{\mathrm{Aut}}(H_{a,b}) \cong A_4$ (if exists) and all of $H_{a,b}$'s such that $\overline{\mathrm{Aut}}(H_{a,b})$ is not isomorphic to $A_4$.
%Classify the $H_{a,b}$'s collected in Step 2 into isomorphism classes, based on Lemma \ref{lem:isom}.
%Return the isomorphism classes.
\end{enumerate}
\end{thm}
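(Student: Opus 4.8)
The plan is to split the theorem into its two assertions---correctness (the output is a complete, irredundant set of representatives of the $k$-isomorphism classes of superspecial $H_{a,b}$ with $a,b\in\mathbb{F}_{p^2}$) and complexity (the claimed $\tilde{O}(p^4)$, resp.\ $\tilde{O}(p^3)$, running time)---and to treat correctness first. For the correctness of Step~2, I would argue as follows. By Lemma \ref{lem:ssp} together with Nygaard's criterion (Theorem \ref{thm:Nygaard}), a \emph{smooth} curve $H_{a,b}$ is superspecial if and only if its Cartier--Manin matrix $M_{a,b}$ vanishes; the entries of $M_{a,b}$ are the coefficients $c_k$ of $f_{a,b}^{(p-1)/2}$, which are honest polynomials in $a,b$, so $M_{a,b}=0$ is a polynomial system over $\mathbb{F}_{p^2}$, and Step~2-1 computes exactly its $\mathbb{F}_{p^2}$-rational solution set. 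The one remaining point is that $H_{a,b}$ is a genuine hyperelliptic curve of genus $4$ precisely when $f_{a,b}$ is separable, which is exactly what Step~2-2 tests via $\gcd(f_{a,b},f_{a,b}')$, discarding the parameters for which $f_{a,b}$ has a multiple root. Hence after Step~2 the algorithm holds exactly the set of $(a,b)\in\mathbb{F}_{p^2}^2$ for which $H_{a,b}$ is a superspecial hyperelliptic curve of genus~$4$.

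For the correctness of Step~3 I would invoke the automorphism classification. By Theorem \ref{thm:app} every superspecial $H_{a,b}$ falls into the types \textbf{3}, \textbf{7}, or \textbf{9} of Table \ref{table:aut}, i.e.\ $\overline{\mathrm{Aut}}(H_{a,b})$ is one of $\mathbb{Z}_3$, $A_4$, or $\mathbb{Z}_9$. Since isomorphic curves have isomorphic reduced automorphism groups, no curve of type $\mathbb{Z}_3$ or $\mathbb{Z}_9$ can be isomorphic to one of type $A_4$, and by Lemma \ref{lem:HabIsom} two curves in the family with reduced automorphism group $\mathbb{Z}_3$ or $\mathbb{Z}_9$ are isomorphic only if their parameters coincide. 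Therefore each collected $H_{a,b}$ with $\overline{\mathrm{Aut}}(H_{a,b})\not\cong A_4$ represents its own $k$-isomorphism class and all of them must be output, whereas the curves with $\overline{\mathrm{Aut}}(H_{a,b})\cong A_4$ form a single class, so exactly one representative of them suffices. The criterion of Lemma \ref{lem:isomA4} is precisely what lets the algorithm detect the $A_4$ case and thereby emit exactly one representative per class.

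For the complexity I would account for the steps in turn. Step~1 costs $O(p^3)$ by Lemma \ref{lem:HabCM}. In Step~2-1 the matrix $M_{a,b}$ has only a bounded number of nonzero entries (by its explicit shape determined above), each a bivariate polynomial of bidegree $O(p)$; the resultant with respect to one variable of two such entries is univariate of degree $O(p^2)$ and is formed in $\tilde{O}(p^3)$, after which one takes the gcd $G(a)$ of these resultants, root-finds $G$ over $\mathbb{F}_{p^2}$, and back-substitutes to recover the $b$-coordinates. The governing quantity is $\deg G$: there are at most $\deg G$ candidate values of $a$, and resolving each into solutions $(a_0,b_0)$ costs $O(p^2)$, so this phase is $\tilde{O}(p^2\deg G)$; since $\deg G=O(p^2)$ in general this is $\tilde{O}(p^4)$, whereas the hypothesis $\deg G=O(p)$ gives $\tilde{O}(p^3)$. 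Steps~2-2 and~3 run over the at most $O(p^2)$ collected parameters with $O(1)$ work each---the defining polynomial has fixed degree $10$ and the $A_4$-test of Lemma \ref{lem:isomA4} is a constant-size check---so they are dominated by Step~2-1. Summing yields $\tilde{O}(p^4)$ in general and $\tilde{O}(p^3)$ under the gcd hypothesis.

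The hard part is the complexity analysis of Step~2-1: to make the dichotomy $\tilde{O}(p^4)$ versus $\tilde{O}(p^3)$ precise one must pin down the bidegrees of the nonzero entries of $M_{a,b}$, bound the degree of each resultant, and control both the number and the per-root cost of the back-substitutions in terms of $\deg G$. The correctness arguments, by contrast, are essentially an assembly of Yui's formula, Nygaard's criterion, and the automorphism-group lemmas; the only genuinely delicate input there is the claim that the superspecial $A_4$-curves form a single $k$-isomorphism class, which I would justify through the classification of Theorem \ref{thm:app} and the explicit criterion of Lemma \ref{lem:isomA4}.
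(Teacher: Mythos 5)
Your proposal is correct and follows essentially the same route as the paper: correctness via Yui's formula and Nygaard's criterion plus the separability test, irredundancy via Lemma~\ref{lem:HabIsom} and the uniqueness of the $A_4$ class (which the paper gets from Lemma~\ref{lem:A4S4}, subsumed in your appeal to Theorem~\ref{thm:app}), and the identical resultant-gcd-root-finding accounting for Step~2-1 yielding $\tilde{O}(p^3 + p^2\deg G)$, hence the $\tilde{O}(p^4)$ versus $\tilde{O}(p^3)$ dichotomy. The only differences are cosmetic (you eliminate toward $a$ where the paper takes the gcd in $b$ first, and you cite the classification theorem rather than the underlying lemma), so nothing further is needed.
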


\begin{proof}
The correctness follows from Lemmas \ref{lem:isom}, \ref{lem:ssp}, \ref{lem:HabIsom} and \ref{lem:A4S4}.
The complexity of Step 1 is estimated as $O(p^3)$ by Lemma \ref{lem:HabCM} below.
The most generic and efficient method for Step 2-1 is the following resultant-based method:
\begin{enumerate}
    \item[(1)] Compute resultants of non-zero entries of $M_{a,b}$ with respect to $a$ (or $b$).
    \item[(2)] Compute the gcd in $\mathbb{F}_{p^2}[b]$ of the resultants and its roots in $\mathbb{F}_{p^2}$.
    \item[(3)] For each root $b_0$, evaluate it to $b$ in $M_{a,b}$, and then compute the gcd in $\mathbb{F}_{p^2}[a]$ of non-zero entries of $M_{a,b_0}$ and its roots $\mathbb{F}_{p^2}$.
\end{enumerate}
Since the degree of each non-zero entry of $M_{a,b}$ is $O(p)$ (both in $a$ and $b$), the resultants are computed in $\tilde{O}(p^3)$~\cite{HL21}, and their gcd in (2) is computed in $\tilde{O}(p^2)$.
Letting the degree of the gcd be $d = O(p^2)$, one can compute its roots in $\tilde{O}(d^2)$.
For each root $b_0$ ($O(d)$ possible choices), evaluate it to $b$ of $M_{a,b}$ in time $O(p)$, and compute the gcd of non-zero entries of $M_{a,b_0}$ in time $\tilde{O}(p)$.
The roots of the second gcd are also computed in $\tilde{O}(p^2)$.
Step 2-2 is clearly done in constant time.
Thus, the complexity of Step 2 is upper-bounded by $\tilde{O}(p^3 + d^2 + d p^2)$.
Note that the number of roots $(a_0,b_0)$ is $ O(p^2)$.
% Compute the roots of the second gcd in time $\tilde{O}(p^2)$.
% For each $a$ ($p$ possible choices), test $\mathrm{gcd}(f,f') = 1$ or not (in constant time).
% The total complexity is $\tilde{O}(p^4 + q p^2 )$.

As for Step 3, checking the condition in Lemma \ref{lem:isomA4} is done in constant time for each root $(a_0,b_0)$, so that the complexity of Step 3 is $O(p^2)$.
\end{proof}

We remark that $d$ and the number of roots $(a_0,b_0)$ are both $O(p)$ in practice, see Tables \ref{table:1} -- \ref{table:4} in Section \ref{subsec:imp} below.
From this, the complexities of Steps 2 and 3 are expected to be $\tilde{O}(p^3)$ and $O(p)$, and in this case the total compexity of Main Algorithm is $\tilde{O}(p^3)$.

\begin{lem}\label{lem:HabCM}
The Cartier-Manin matrix $M_{a,b}$ in Step 1 is computed in time $O(p^3)$.
\end{lem}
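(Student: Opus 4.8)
The plan is to reduce the entries of $M_{a,b}$, which by Lemma~\ref{lem:ssp} are coefficients of $x^{ip-j}$ in $f_{a,b}^{(p-1)/2}$, to coefficients of a power of a single \emph{cubic} polynomial, and then to extract those by the Flajolet--Salvy recurrence~\cite{FS} recalled above. First I would use the factorization $f_{a,b}(x) = x\,g(x^3)$ with $g(t) = t^3 + t^2 + a t + b$, so that $f_{a,b}^{(p-1)/2} = x^{(p-1)/2}\, g(x^3)^{(p-1)/2}$. Writing $n := (p-1)/2$ and $w_m := (g^n)_m$ for the $t^m$-coefficient of $g(t)^n$, the coefficient of $x^k$ in $f_{a,b}^{(p-1)/2}$ equals $w_{(k-n)/3}$ when $3 \mid (k-n)$ and vanishes otherwise. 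Hence every nonzero entry of $M_{a,b}$ is one of the bivariate polynomials $w_m \in \mathbb{F}_p[a,b]$, and it suffices to compute the $w_m$ for the constantly many indices $m$ occurring in the two cases $p\equiv 1,2\pmod 3$ of the matrix.

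Next I would compute the $w_m$ by the descending recurrence~\eqref{eq:relation3}. Since $g$ is monic of degree $d=3$, that relation reads $(3n-m)\,w_m = -\sum_{i=1}^{3}\bigl(n(3-i)-m-i\bigr) h_{3-i}\,w_{m+i}$ with $(h_0,h_1,h_2,h_3) = (b,a,1,1)$; starting from $w_{3n}=1$ it determines each $w_m$ from $w_{m+1},w_{m+2},w_{m+3}$ by scalar multiplications, multiplications by the monomials $a$ and $b$, additions, and a single division by the \emph{scalar} $3n-m \in \mathbb{F}_p$. I prefer this form over the ascending recurrence~\eqref{eq:relation1-2}, whose divisor $h_0 k = b k$ cannot be inverted in $\mathbb{F}_p[a,b]$. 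Running $m$ downward from $3n = 3(p-1)/2$ to the smallest index appearing in $M_{a,b}$, which is $\Theta(p)$, produces all required entries using only such linear-algebra operations, with no Gr\"obner-basis or resultant computation.

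For the complexity I would bound the size of each $w_m$. Its monomials $a^{k_3}b^{k_4}$ correspond bijectively to the pairs $(k_3,k_4)$ for which the system $3k_1+2k_2+k_3=m$, $k_1+k_2+k_3+k_4=n$ has a nonnegative solution $(k_1,k_2)$; eliminating $(k_1,k_2)$ shows these pairs fill the lattice points of a planar polygon of area $O(p^2)$, so each $w_m$ has $O(p^2)$ terms. A single step of the recurrence is then $O(p^2)$ operations in $\mathbb{F}_{p^2}$, and since there are $O(p)$ steps the total cost is $O(p^3)$; keeping only a sliding window of three consecutive coefficients bounds the space by $O(p^2)$.

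The one genuine obstacle is that the divisor $3n-m$ vanishes modulo $p$ at the single interior index $m_0 = (p-3)/2$ (the other vanishing, at $m=3n$, is the initialization), and $m_0$ lies inside the range we traverse. I would cross it exactly as in the Remark following~\eqref{eq:relation3} and in Bostan--Gaudry--Schost~\cite{BGS}: either carry one extra $p$-adic digit, working over $\mathbb{Z}/p^2\mathbb{Z}[a,b]$ so that the relation $p\,w_{m_0} = -(\text{right-hand side})$ can be solved for $w_{m_0}\bmod p$, or evaluate this one coefficient directly from the multinomial expansion of $g(t)^n$ in $O(p^2)$ operations and then resume the descent. Either remedy changes only the implied constant, so the bound $O(p^3)$ is unaffected.
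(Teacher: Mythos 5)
Your proof is correct, and its skeleton matches the paper's: factor $f_{a,b}=x\,g$ with $g$ supported on powers of $x^3$ (your compression to the cubic $g(t)=t^3+t^2+at+b$ is cosmetic --- the paper keeps $g=x^9+x^6+ax^3+b$ and strides by $3$), treat $a,b$ symbolically, run the Flajolet--Salvy recurrence for $O(p)$ steps on bivariate coefficients of $O(p^2)$ terms each, and conclude $O(p^3)$. Where you genuinely diverge is in the handling of singular divisors. The paper's design never meets one: it uses the \emph{ascending} recurrence \eqref{eq:formula1} from $(g^n)_0=b^{(p-1)/2}$ for the rows $1\le i\le 3$ --- there the divisor is $kb$ with $k$ a multiple of $3$ in $[3,3p-3]$, never $\equiv 0 \bmod p$ since $p$ and $2p$ are not multiples of $3$, and the division by $b$ is \emph{exact} because the right-hand side equals $kb(g^n)_k$ in $\mathbb{F}_p[a,b]$ --- and the \emph{descending} recurrence \eqref{eq:formula2} from $(g^n)_{9n}=1$ only for the row $i=4$, stopping at $k=\tfrac{7p-7}{2}$, where $9n-k\le p-1$ stays a unit. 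Your single descent over the whole range must instead cross the singular index $m_0=(p-3)/2$ (in your $t$-indexing; $k=3m_0=\tfrac{3p-9}{2}$ in the paper's), which you identify correctly, and both of your patches are sound: the one-extra-$p$-adic-digit lift {\`a} la Bostan--Gaudry--Schost (which the paper itself alludes to in Section 2) works because all divisors above $m_0$ are units modulo $p^2$, the relation at $m_0$ determines $w_{m_0}$ modulo $p$ after exact division by $p$, and every divisor below $m_0$ is a unit modulo $p$, so mod-$p$ accuracy suffices thereafter; the direct multinomial evaluation of $w_{m_0}$ is likewise $O(p^2)$ with precomputed factorials. Either way the $O(p^3)$ bound is unaffected. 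One small inaccuracy in your justification: dismissing the ascending recurrence \eqref{eq:relation1-2} on the grounds that $h_0k=bk$ ``cannot be inverted in $\mathbb{F}_p[a,b]$'' is not a real obstruction, since the division by the monomial $b$ is exact and trivial to implement; exploiting it, as the paper does, is precisely what allows a two-run scheme that dodges the singular crossing entirely, whereas your route trades that for a single uniform loop plus a one-step patch.
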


\begin{proof}
Recall from Lemma \ref{lem:ssp} that each $(i,j)$-entry of $M_{a,b}$ is the $x^{ip-j}$-coefficient of $f^n$ with $f := f_{a,b}$ and $n:= (p-1)/2$. 
% which is equal to the $x^k$-coefficient of $g^n$ with $k = \frac{(2i-1)p + (2j-1)}{2}$, where we set $g = x^9 + x^6 + a x^3 + b$.
% Now, we consider to compute the $x^{ip-j}$-coefficients of $f^n$ with $1 \leq i, j \leq 4$ for $f = f_{a,b} = x^{10} + x^7 + a x^4 + b x$ and $n = (p-1)/2$.
Putting $g = x^9 + x^6 + a x^3 + b$, it follows from $f^n = x^n g^n$ that $x^{ip-j}$-coefficient of $f^n$ is equal to the $x^k$-coefficient of $g^n$ for $k = \frac{(2i-1)p + (2j-1)}{2}$.
Moreover, since $g^n$ is a polynomial in $x^3$, the coefficient of $x^k$ in $g^n$ is zero if $k \not\equiv 0 \pmod{3}$.
Here, it follows from \eqref{eq:relation} that
\begin{equation}\label{eq:formula1}
\begin{split}
k b (g^n)_{k} = & (3 (n+1)-k) a (g^n)_{k-3} + (6(n+1)-k) (g^n)_{k-6}+ (9(n+1)-k) (g^n)_{k-9} ,
\end{split}
\end{equation}
for any $k=3, 6, 9, \ldots , 3p-3$, where we used $g_9 = g_6 = 1$, $g_3 = a$ and $g_0 =b$.
Hence, we can recursively compute $(g^n)_k$ for all $k \leq 3p-3$, starting from $(g^n)_0 = b^{\frac{p-1}{2}}$.
% Since $f_9 = f_6 = 1$, $f_3 = a$ and $f_0 =b$, we also have
% \[
% k (f^n)_{k} = \frac{1}{b} \left( (3 (n+1)-k) a (f^n)_{k-3} + (6(n+1)-k) (f^n)_{k-6} + (9(n+1)-k) (f^n)_{k-9} \right)
% \]
In particular, the coefficients $(g^n)_k$ with $k = \frac{(2i-1)p + (2j-1) }{2}$ for $1 \leq i \leq 3$ and $1 \leq j \leq 4$ are computed, since the maximal vaule $(5p-1)/2$ of such $k$ is less than $3p$.
% , we have $k \neq 0 \pmod{p}$ for any $k \leq \frac{5p-1}{2}$ with $k \equiv 0 \pmod{3}$. 
The number of required iteration is at most $(5p-1)/2 = O(p)$.
The cost of computing each $(g^n)_k$ is $O(p^2)$.
Indeed, each $(g^n)_k$ is a polynomial in $a$ and $b$ of total degree $\leq n=\frac{p-1}{2}$, and thus the number of its non-zero terms is $\binom{n+2}{2} = O(p^2)$.
Therefore, the total cost of computing the $(i,j)$-entries of $M_{a,b}$ for $1 \leq i \leq 3$ and $1 \leq j \leq 4$ is $O(p^3)$.
% From this, we have the following lemma:

% \begin{lemma}\label{lem:CM1}
% The $(i,j)$-entries of the Cartier-Manin matrix $M_{a,b}$ with $1 \leq i \leq 3$ and $1 \leq j \leq 4$ are computed by recursively using the above formula, and the total cost of the computation is time $O(p^3)$ ($\mathbb{F}_p$ operations).
% \end{lemma}

Next, we consider to compute the $x^{k}$-coefficients of $g^n$ for $k = \frac{(2i-1)p + (2j-1)}{2}$ with $i = 4$ and $1 \leq j \leq 4$, namely, $\frac{7p-7}{2}$, $\frac{7p-5}{2}$, $\frac{7p-3}{2}$ and $\frac{7p-1}{2}$.
It follows from \eqref{eq:relation3} that
\begin{equation}\label{eq:formula2}
\begin{split}
(9n-k) (g^n)_{k} = &  (k +9) b (g^n)_{k+9} + (k+6 -3n) a (g^n)_{k+6}+(k+3 -6n )  (g^n)_{k+3}.
\end{split}
\end{equation}
For $\frac{7p-7}{2} \leq k \leq 9n = \frac{9(p-1)}{2}$, we have $9n - k \leq p-1$, and thus we can recursively compute $(g^n)_k$ for all $\frac{7p-7}{2} \leq k \leq 9 n$, starting from $(g^n)_{9n} = 1$.
In particular, the coefficients $(g^n)_k$ for $k = \frac{7p-7}{2}$, $\frac{7p-5}{2}$, $\frac{7p-3}{2}$ and $\frac{7p-1}{2}$ are computed.
Hence, the total cost of computing the $(i,j)$-entries of $M_{a,b}$ for $i=4$ and $1 \leq j \leq 4$ is $O(p^3)$, similarly to the case where $1 \leq i \leq 3$ and $1 \leq j \leq 4$.
% Similarly to Lemma \ref{lem:CM1}, we have the following:
% \begin{lemma}
% The $(i,j)$-entries of the Cartier-Manin matrix $M_{a,b}$ with $i=4$ and $1 \leq j \leq 4$ are computed by recursively using \eqref{eq:formula2}, and the total cost of the computation is time $O(p^3)$ ($\mathbb{F}_p$ operations).
% \end{lemma}
\end{proof}

The following lemma implies that we need not classify $H_{a,b}$'s with $\overline{\rm Aut}(H_{a,b}) \cong \mathbb{Z}_3$ or $\mathbb{Z}_9$ collected in Step 2 into isomorphism classes:

\begin{lem}\label{lem:HabIsom}
If two hyperlliptic curves $H_{a,b}$ and $H_{a',b'}$ with reduced automorphism groups $\mathbb{Z}_3$ or $\mathbb{Z}_9$ are isomorphic, then $(a,b) = (a',b')$.
\end{lem}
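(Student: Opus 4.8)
The plan is to reduce everything to a Möbius transformation of the $x$-line and then exploit that the order-$3$ automorphism of $H_{a,b}$ has a distinguished fixed locus. Suppose $\phi\colon H_{a,b}\to H_{a',b'}$ is an isomorphism. By Lemma \ref{lem:isom} it is represented by a pair $(P,\lambda)$ with $P=\begin{pmatrix}\alpha&\beta\\\gamma&\delta\end{pmatrix}\in\mathrm{GL}_2(k)$, and it induces the Möbius transformation $M\colon x\mapsto \frac{\alpha x+\beta}{\gamma x+\delta}$ on the quotient line $\mathbb{P}^1=H_{a,b}/\langle\iota\rangle$. The isomorphism condition unwinds to the polynomial identity
\[
\lambda^2 f_{a,b}(x)=(\gamma x+\delta)^{10}\,f_{a',b'}\!\left(\tfrac{\alpha x+\beta}{\gamma x+\delta}\right),
\]
and the whole argument amounts to extracting $(a,b)=(a',b')$ from this identity.

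First I would pin down $M$ using the automorphism hypothesis. The order-$3$ automorphism $\sigma\colon (x,y)\mapsto(\zeta_3 x,\zeta_3^2 y)$ descends to $\bar\sigma\colon x\mapsto \zeta_3 x$ on $\mathbb{P}^1$, whose fixed-point set is exactly $\{0,\infty\}$. Since $\overline{\mathrm{Aut}}(H_{a,b})$ is $\mathbb{Z}_3$ or $\mathbb{Z}_9$, this $\bar\sigma$ generates the \emph{unique} subgroup of order $3$, and the same holds on the target. Conjugation by $M$ carries the order-$3$ subgroup of $\overline{\mathrm{Aut}}(H_{a,b})$ to that of $\overline{\mathrm{Aut}}(H_{a',b'})$; comparing fixed loci then forces $M(\{0,\infty\})=\{0,\infty\}$. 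Hence $P$ is either diagonal ($\beta=\gamma=0$, so $M(x)=\kappa x$) or anti-diagonal ($\alpha=\delta=0$, so $M(x)=\kappa/x$).

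I would then finish by substituting each case into the polynomial identity and matching coefficients. In the anti-diagonal case the identity equates a polynomial supported on the exponents $\{1,4,7,10\}$ with one supported on $\{0,3,6,9\}$; since the leading coefficient $\lambda^2$ is nonzero, this is impossible, so this case cannot occur. In the diagonal case the identity becomes $\lambda^2 f_{a,b}(x)=\delta^{10}f_{a',b'}(\kappa x)$; comparing the $x^{10}$ and $x^{7}$ coefficients gives $\kappa^3=1$, and then comparing the $x^{4}$ and $x^{1}$ coefficients (using $\kappa^{10}=\kappa^{7}=\kappa^{4}=\kappa$) yields $a=a'$ and $b=b'$, as desired.

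The routine part is the final coefficient bookkeeping; the genuine content — and the only place where the hypothesis $\overline{\mathrm{Aut}}\cong\mathbb{Z}_3$ or $\mathbb{Z}_9$ enters — is the middle step, namely showing that $M$ must preserve $\{0,\infty\}$. The hard part is justifying that conjugation by $M$ sends our order-$3$ subgroup to \emph{an order-$3$ subgroup with the same fixed locus}; this is precisely what fails for the richer groups (e.g.\ $A_4$ in case \textbf{7}), where several order-$3$ subgroups with distinct fixed loci coexist, so it is essential here that the order-$3$ subgroup is characteristic.
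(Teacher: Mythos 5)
Your proof is correct and follows essentially the same route as the paper's: both conjugate the order-$3$ automorphism through the isomorphism, invoke the uniqueness of the order-$3$ subgroup of $\mathbb{Z}_3$ or $\mathbb{Z}_9$ to force $P$ to be diagonal or anti-diagonal, then kill the anti-diagonal case by the support mismatch (exponents $\{0,3,6,9\}$ versus degree $10$) and extract $(a,b)=(a',b')$ from the same coefficient comparison, with $\kappa^3=1$ playing the role of the paper's $\alpha=\zeta^j\delta$. The only cosmetic difference is that you derive the shape of $P$ from the fixed locus $\{0,\infty\}$ of the induced M\"obius map, whereas the paper solves the matrix relation $P^{-1}AP=A^i$ in $\mathrm{PGL}_2(k)$ entry-wise.
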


\begin{proof}
Assume that there exists an isomorphicm $\rho : H_{a,b} \to H_{a',b'}$.
Recall from Lemma \ref{lem:isom} that $\rho$ is represented by $(P, \lambda) \in \mathrm{GL}_2(k) \times k^{\times}$ as in Lemma \ref{lem:isom}. 
Let $\zeta$ be a primitive $3$rd root of unity in $k$.
For an order-$3$ automorphism $\sigma : (x,y) \mapsto (\zeta x, \zeta^2 y)$ on $H_{a,b}$ represented by $A := \mathrm{diag}(\zeta,1)$, we set $\tau := \rho^{-1} \sigma \rho$;
\[
\begin{CD}
H_{a,b} @>{\sigma}>> H_{a,b} \\
@A{\rho}AA @VV{\rho^{-1}}V   \\
H_{a',b'} @>{\tau}>> H_{a',b'} .
\end{CD}
\]
Since $\tau$ also has order $3$ in $\overline{\rm Aut}(H)$, and since $\mathbb{Z}_9$ has a unique subgroup of order $3$, we have that $\tau$ is given by $(x,y) \mapsto (\zeta^i, \pm \zeta^{2i} y)$ for $i=1$ or $2$.
Thus, comparing the matrices corresponding to the both sides of $\tau = \rho^{-1} \sigma \rho$, we obtain an equation $P^{-1} A P = A^i$ in $\mathrm{PGL_2}(k)$, say
\[
\begin{pmatrix}
\zeta \alpha & \zeta \beta \\
\gamma & \delta
\end{pmatrix}
=
\mu
\begin{pmatrix}
\zeta^i \alpha & \beta \\
\zeta^i \gamma & \delta
\end{pmatrix}
\]
for some $\mu \in k^{\times}$.

If $\delta \neq 0$, then $\mu = 1$, $\beta = \gamma = 0$ and $i=1$.
In this case, $\rho$ is $(x,y) \mapsto (\alpha \delta^{-1} x, \delta^{-5} \lambda y)$, and thus $y^2 = f_{a,b}(x)$ is transformed into $\lambda^2 y^2 =  \alpha^{10} x^{10} + \alpha^{7} \delta^3 x^7 + a \alpha^4 \delta^6 x^4 + b \alpha \delta^9 x = \lambda^2 f_{a',b'}(x)$.
Therefore, $\alpha^{10} = \alpha^7 \delta^3 = \lambda^2$, and hence $\alpha = \zeta^j \delta$ for some $j$.
Since $\zeta^j \delta^{10} = \lambda^2$, it follows also from $a \zeta^j \delta^{10} = a' \lambda^2$ and $b \zeta^j \delta^{10} = b' \lambda^2$ that $a = a'$ and $b= b'$.

Suppose $\delta = 0$; then $\mu = \zeta$, $\alpha = 0$ and $i = 2$.
In this case, $\rho$ is $(x,y) \mapsto (\frac{\beta}{\gamma x}, \frac{\lambda y}{\gamma^5 x^5})$, and thus $y^2 = f_{a,b}(x)$ is transformed into $\lambda^2 y^2 =  \beta^{10} + \beta^{7} \gamma^3 x^3 + a \beta^4 \gamma^6 x^6 + b \beta \gamma^9 x^9 = \lambda^2 f_{a',b'}(x)$.
This is a contradiction since $f_{a',b'}$ has degree $10$.
\end{proof}

If $\overline{\rm Aut}(H_{a,b})$ is not isomorphic to $\mathbb{Z}_3$ nor $\mathbb{Z}_9$, it follows from Lemma \ref{lem:C3} that $\overline{\rm Aut}(H_{a,b}) \cong A_4$ and ${\rm Aut}(H_{a,b}) \cong \mathrm{SL}_2(\mathbb{F}_3)$.
In this case, by considering elements in $\mathrm{SL}_2(\mathbb{F}_3)$, there exists an order-$2$ element in $\overline{\mathrm{Aut}}(H_{a,b})$ whose order in $\mathrm{Aut}(H_{a,b})$ is $4$.
Any two of such $H_{a,b}$'s are isomorphic by Lemma \ref{lem:A4S4}, and one of them is detected by Lemma \ref{lem:isomA4} below; more generally, we have the following lemma, whose proof is essentially same as that of \cite[Theorem 3.1.1]{MK22}:

\begin{lem}\label{lem:isom3}
Let $H : y^2 = f(x)$ be a hyperelliptic curve of genus $g$ over $k$, where $f(x)$ is a separable polynomial over $k$ of degree $2g+2$.
%Assume also that $f'(x)$ is divided by $x$.
Then, the following are equivalent:
\begin{enumerate}
    \item[(1)] $\overline{\mathrm{Aut}}(H)$ has an element $\sigma$ of order $2$ which has order $4$ as an element in $\mathrm{Aut}(H)$.
    \item[(2)] There exist roots $a_1$ and $a_2$ in $k$ of $f$ such that
    \[
    \left\{ \frac{a_i - a_2}{a_i - a_1}  : 3 \leq i \leq 2g  \right\} = \left\{ - \frac{a_i - a_2}{a_i - a_1}  : 3 \leq i \leq 2g  \right\},
    \]
    where $a_3, \ldots , a_{2g+2}$ are the other roots of $f$.
\end{enumerate}
\end{lem}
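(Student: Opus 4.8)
The plan is to realize the reduced automorphism group inside $\mathrm{PGL}_2(k)$ and reformulate everything in terms of the branch locus. By Lemma \ref{lem:isom}, every element of $\overline{\mathrm{Aut}}(H)$ is represented by a Möbius transformation $\bar{\sigma} \in \mathrm{PGL}_2(k)$, and since $\deg f = 2g+2$, the branch locus $B$ of the double cover $x \colon H \to \mathbb{P}^1$ is exactly the set of the $2g+2$ distinct roots of $f$, with $\infty \notin B$; each $\bar{\sigma}$ permutes $B$. First I would record the well-definedness of the condition in (1): if $\bar{\sigma}$ has order $2$, its two lifts to $\mathrm{Aut}(H)$ are $\sigma$ and $\sigma \iota$, and since $\iota$ is central of order $2$ one has $(\sigma \iota)^2 = \sigma^2 \in \langle \iota \rangle$; hence $\sigma^2 \in \{ \mathrm{id}, \iota \}$ independently of the chosen lift, and ``$\sigma$ has order $4$'' means precisely $\sigma^2 = \iota$. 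Next, in characteristic $\neq 2$ a nontrivial involution of $\mathrm{PGL}_2(k)$ is diagonalizable and fixes exactly two points of $\mathbb{P}^1$; since $\bar{\sigma}$ permutes the $2g+2$ points of $B$ and the non-fixed ones fall into $2$-cycles, the number of fixed points lying in $B$ is $\equiv 2g+2 \equiv 0 \pmod 2$, hence equal to $0$ or $2$.

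The heart of the argument is the identity ``$\sigma$ has order $4$ if and only if both fixed points of $\bar{\sigma}$ lie in $B$'', which I would prove by a normalization that simultaneously drives the translation to (2). Write $a_1, a_2$ for the two fixed points of $\bar{\sigma}$; sending them to $0$ and $\infty$ by a Möbius coordinate change (e.g.\ $w = \frac{x - a_2}{x - a_1}$ when they are finite) conjugates $\bar{\sigma}$ to the unique involution of $\mathbb{P}^1$ fixing $0$ and $\infty$, namely $w \mapsto -w$. This coordinate change lifts to an isomorphism of hyperelliptic curves carrying $\sigma$ to an automorphism $\sigma'$ of the transformed model $y^2 = \tilde{f}(w)$, and conjugation preserves orders, so I may compute there. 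One has $\sigma'(w,y) = (-w, \lambda y)$, and the automorphism condition forces $\tilde{f}(-w) = \lambda^2 \tilde{f}(w)$; comparing leading coefficients gives $\lambda^2 = (-1)^{\deg \tilde{f}}$, whence $\sigma'^2 = (w, \lambda^2 y)$ equals $\iota$ exactly when $\deg \tilde{f}$ is odd. Now $\deg \tilde{f}$ is odd iff $\infty$ lies in the transformed branch locus, i.e.\ iff $a_1 \in B$; and then the parity statement from the first paragraph forces the other fixed point $a_2$ to lie in $B$ as well. Since the branch locus is \emph{intrinsic}, this proves order $4 \Leftrightarrow a_1, a_2 \in B$.

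It remains to match this with (2). In the coordinate $w = \frac{x - a_2}{x - a_1}$ the remaining roots $a_3, \dots, a_{2g+2}$ become $w_i = \frac{a_i - a_2}{a_i - a_1}$, and $\bar{\sigma}$ permuting $B$ while fixing $a_1, a_2$ is equivalent to $\{ w_i \}$ being stable under $w \mapsto -w$, i.e.\ to $\left\{ \frac{a_i - a_2}{a_i - a_1} \right\} = \left\{ -\frac{a_i - a_2}{a_i - a_1} \right\}$, which is exactly (2). For (1)$\Rightarrow$(2) I take the order-$2$ element furnished by (1), conclude its two fixed points lie in $B$ by the equivalence above, name them $a_1, a_2$ (automatically finite roots), and read off (2). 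For (2)$\Rightarrow$(1) I let $\bar{\sigma}$ be the involution fixing the given roots $a_1, a_2$; condition (2) guarantees it permutes $B$, so it is an order-$2$ element of $\overline{\mathrm{Aut}}(H)$, and since its fixed points lie in $B$ its lift has order $4$. The main obstacle I anticipate is bookkeeping rather than depth: justifying cleanly that a Möbius change of coordinates lifts to a hyperelliptic isomorphism intertwining the two automorphisms (so that orders and the branch locus transport correctly), and handling the degenerate configurations (a fixed point at $\infty$, or $\gamma = 0$) uniformly — which is exactly why moving the fixed points to $0$ and $\infty$ at the outset is worthwhile. This mirrors the genus-$3$ argument of \cite[Theorem 3.1.1]{MK22}.
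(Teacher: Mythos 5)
Your proof is correct and follows essentially the same route as the paper's: both conjugate the involution by the M\"obius map $w = \frac{x-a_2}{x-a_1}$ (the paper's matrix case \textbf{(A)}) so that $\sigma$ becomes $(w,y)\mapsto(-w,\lambda y)$, observe that order $4$ forces $\lambda^2=-1$ and hence an odd-degree model branched at $0$ and $\infty$, and read condition (2) off as stability of the transformed roots under $w\mapsto -w$. The only difference is packaging: where the paper invokes Proposition \ref{prop:aut} and the \textbf{(A)}/\textbf{(B)} matrix case analysis from \cite[Theorem 3.1.1]{MK22}, you derive the normalizing map intrinsically from the two fixed points of $\bar{\sigma}$ and replace the ``$f'$ is odd, hence divisible by $x$'' step by the parity count on the branch locus --- equivalent mechanisms for the same facts (and note that both arguments correctly use all $2g$ remaining roots, i.e.\ $3 \leq i \leq 2g+2$; the range $3 \leq i \leq 2g$ displayed in (2) appears to be a typo in the statement).
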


\begin{proof}
Assume (1).
By Proposition \ref{prop:aut}, there exists a hyperelliptic curve $H' : y^2 = f' (x)$ over $k$ and an isomorphism $\rho : H' \to H$ such that the automorphism $\tau := \rho^{-1} \sigma \rho$ of $H'$ is represented by $(\mathrm{diag}(-1, 1), \mu')  \in \mathrm{GL}_2(k) \times k^{\times}$;
\[
\begin{CD}
H @>{\sigma}>> H \\
@A{\rho}AA @VV{\rho^{-1}}V   \\
H' @>{\tau}>> H' ,
\end{CD}
\]
where $\mu'$ is an element in $k$ satisfying $(\mu')^2 = \pm 1$.
Since $\tau$ also has order $4$ in ${\rm Aut}(H)$, we have $(\mu')^2 = -1$ and thus $\mu' = \pm i$, where $i$ is an element in $k$ with $i^2 = -1$.
Moreover, it follows from $\tau \in \mathrm{Aut}(H)$ that $- f'(-x) = f'(x)$.
Therefore, $f'(x)$ is of degree $2g+1$ and is divided by $x$.

As for the form of a matrix representing $\rho$, we may assume from the proof of \cite[Theorem 3.1.1]{MK22} that it is either of the following:
\[
\textbf{(A):}\quad
\begin{pmatrix}
a_1 & a_2 \\
1 & 1 
\end{pmatrix}
\quad
\text{or}
\quad
\textbf{(B):}\quad
\begin{pmatrix}
1 & b_1 \\
0 & 1 
\end{pmatrix}
,
\]
where $a_1,a_2,b_1\in k$.
The case {\bf (B)} is impossible since $\rho$ does not send the point at infinity to itself.
In the case {\bf (A)}, the inverse map $\rho^{-1}$ is represented by 
\[
\begin{pmatrix}
a_1 & a_2 \\
1 & 1 
\end{pmatrix}
^{-1}=\frac{1}{a_1-a_2}
\begin{pmatrix}
1 & -a_2 \\
-1 & a_1 
\end{pmatrix}
.
\]
For a ramification point $(\alpha, 0)$ of $H$ with a root $\alpha$ of $f$, its image in $H'$ by $\rho^{-1}$ is $\left(-\frac{\alpha-a_2}{\alpha-a_1},0\right)$.
Since $H'$ has $0$ and $\infty$ as ramification points, we have that $a_1$ and $a_2$ are (mutually different) roots of $f$.
The condition $\tau \in \mathrm{Aut}(H)$ implies the assertion (2).

Conversely, if (2) holds, then we define $\rho$ as in {\bf (A)} with roots $a_1$ and $a_2$ of $f$, and then the image of $\rho$ is a hyperelliptic curve $H'$ with ramification points $\infty$, $(0,0)$ and $(\pm b_j, 0)$ for some $b_j \in k^{\times}$ with $1 \leq j \leq g$.
Therefore $H'$ has an automorphism $(x,y) \mapsto (-x, i y)$, as desired.
\end{proof}

As a particular case of Lemma \ref{lem:isom3}, we have the following:

\begin{lem}\label{lem:isomA4}
$\overline{\mathrm{Aut}}(H_{a,b}) \cong A_4$ if and only if there exist roots $a_1$ and $a_2$ of $f_{a,b}$ satisfying the condition (2) of Lemma \ref{lem:isom3} for $f = f_{a,b}$ and $g=4$.
\end{lem}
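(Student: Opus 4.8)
The plan is to derive Lemma~\ref{lem:isomA4} as a direct specialization of Lemma~\ref{lem:isom3}, which has already been established in full generality. The key observation is that Lemma~\ref{lem:isom3} applies to any hyperelliptic curve $H : y^2 = f(x)$ of genus $g$ with $f$ separable of degree $2g+2$, giving the equivalence between (1) the existence of an order-$2$ element $\sigma \in \overline{\mathrm{Aut}}(H)$ whose lift to $\mathrm{Aut}(H)$ has order $4$, and (2) the combinatorial cross-ratio condition on the roots of $f$. So the first thing I would do is record that $f_{a,b}$ has degree $10 = 2g+2$ with $g = 4$, and that for the relevant $(a,b)$ (namely those surviving Step~2 of Main Algorithm) the curve $H_{a,b}$ is genuinely a hyperelliptic curve, so $f_{a,b}$ is separable; thus the hypotheses of Lemma~\ref{lem:isom3} are satisfied with $f = f_{a,b}$.

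Next I would connect condition (1) of Lemma~\ref{lem:isom3} to the statement $\overline{\mathrm{Aut}}(H_{a,b}) \cong A_4$. By the classification results in the appendix (invoked earlier via Lemma~\ref{lem:C3}), we already know that $\overline{\mathrm{Aut}}(H_{a,b})$ is isomorphic either to $\mathbb{Z}_3$, $\mathbb{Z}_9$, or $A_4$, and that in the $A_4$ case one has $\mathrm{Aut}(H_{a,b}) \cong \mathrm{SL}_2(\mathbb{F}_3)$. The essential point is therefore to show that the existence of an order-$2$ element of $\overline{\mathrm{Aut}}(H_{a,b})$ lifting to an order-$4$ element of $\mathrm{Aut}(H_{a,b})$ \emph{characterizes} the $A_4$ case among these three possibilities. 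For the forward direction this is immediate: $A_4$ contains elements of order $2$ and, as noted in the discussion preceding Lemma~\ref{lem:isom3}, the corresponding lift in $\mathrm{SL}_2(\mathbb{F}_3)$ has order $4$, so condition (1) holds. For the converse, I would observe that $\mathbb{Z}_3$ and $\mathbb{Z}_9$ have no element of order $2$ at all, so condition (1) forces $\overline{\mathrm{Aut}}(H_{a,b})$ to be the remaining possibility $A_4$.

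Having established that $\overline{\mathrm{Aut}}(H_{a,b}) \cong A_4$ is equivalent to condition (1) of Lemma~\ref{lem:isom3}, the proof concludes by simply quoting the equivalence (1)$\Leftrightarrow$(2) of that lemma, specialized to $f = f_{a,b}$ and $g = 4$. This yields exactly the claimed criterion: the isomorphism $\overline{\mathrm{Aut}}(H_{a,b}) \cong A_4$ holds if and only if there exist roots $a_1$ and $a_2$ of $f_{a,b}$ satisfying condition (2).

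The main obstacle, and the only place requiring genuine care, is the converse implication ``(1) $\Rightarrow$ $A_4$'': it relies on knowing that the full list of possible reduced automorphism groups for curves in this family is precisely $\{\mathbb{Z}_3, \mathbb{Z}_9, A_4\}$, together with the fact that the lift-to-order-$4$ behavior is genuinely visible in $\mathrm{SL}_2(\mathbb{F}_3)$ but absent from the cyclic cases. Both ingredients are supplied by the appendix classification (Theorem~\ref{thm:app}) and Lemma~\ref{lem:C3}, so once those are cited the argument is short. I would take care to state explicitly that cyclic groups contain no order-$2$ element other than via the hyperelliptic involution, which is quotiented out in $\overline{\mathrm{Aut}}$, so that condition (1) cannot hold in the $\mathbb{Z}_3$ or $\mathbb{Z}_9$ cases.
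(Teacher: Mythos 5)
Your proposal is correct and matches the paper's reasoning: the paper states Lemma \ref{lem:isomA4} as an immediate particular case of Lemma \ref{lem:isom3}, with the surrounding discussion supplying exactly your two ingredients, namely the trichotomy $\overline{\mathrm{Aut}}(H_{a,b}) \cong \mathbb{Z}_3$, $\mathbb{Z}_9$ or $A_4$ from Lemma \ref{lem:C3} and the fact that in the $A_4$ case an order-$2$ element of $\overline{\mathrm{Aut}}(H_{a,b})$ lifts to an order-$4$ element of $\mathrm{Aut}(H_{a,b}) \cong \mathrm{SL}_2(\mathbb{F}_3)$, while the odd-order cyclic cases admit no order-$2$ element at all. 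Your attention to the hypotheses of Lemma \ref{lem:isom3} (that $f_{a,b}$ be separable of degree $10 = 2g+2$) is a welcome explicit check that the paper leaves implicit.
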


% \begin{lem}\label{lem:HabIsom}
% \textcolor{blue}{
% Fix the genus $g$.
% Then testing if two hyperellliptic curves of genus $g$ over $\mathbb{F}_{p^2}$ are $\overline{\mathbb{F}_{p}}$-isomorphic or not is done in constant time with respect to $p$.}
% \end{lem}

\if 0
\begin{lemma}\label{lem:HabCM}
The Cartier-Manin matrix $M_{a,b}$ are computed by recursively using \eqref{eq:formula1} and \eqref{eq:formula2}, and the total cost of the computation is $O(p^3)$.
\end{lemma}

\if 0
\begin{lemma}\label{lem:CM}
The Cartier-Manin matrix $M_{a,b}$ of $H_{a,b}$ is given as follows:
\begin{enumerate}
\item If $p \equiv 1 \pmod{3}$, the entries of $M_{a,b}$ other than $(1,1)$, $(2,2)$, $(3,3)$, $(4,4)$, $(1,4)$ and $(4,1)$ are zero.
\item If $p \equiv 2 \pmod{3}$, the entries of $M_{a,b}$ other than $(1,3)$, $(2,1)$, $(3,1)$, $(3,4)$ and $(4,3)$ are zero.
% \begin{equation}\label{HW}
% M_{a,b} = \left( \begin{array}{cccc}
% 	c_{p-1} & 0 & 0 & c_{p-4}\\
% 	0 & c_{2p-2} & 0 & 0 \\
% 	0 & 0 & c_{3p-3} & 0\\ 
% 	c_{4p-1} & 0 & 0 & c_{4p-4}
% 	\end{array} \right).
% \end{equation}
% \item If $p \equiv 2 \pmod{3}$,
% \begin{equation}\label{HW2}
% M_{a,b} = \left( \begin{array}{cccc}
% 	0 & 0 & c_{p-3} & 0\\
% 	0 & c_{2p-2} & 0 & 0 \\
% 	c_{3p-1} & 0 & 0 & c_{3p-4}\\ 
% 	0 & 0 & c_{4p-3} & 0
% 	\end{array} \right).
% \end{equation}
\end{enumerate}
Here $c_{ip-j}$ denotes the coefficient of $x^{ip-j}$ in $f_{a,b}^{(p-1)/2}$.
\end{lemma}
\fi
\fi

\if 0
\begin{lemma}\label{lem:HabIsom}
For a fixed $(a,b)$, the number of $\mathbb{F}_{p^2}$-isomorphisms from $H_{a,b}$ to $H_{a',b'}$ for some $(a',b')$ is at most $81 (p^2-1) + 1$, and all of such isomorphisms together with possible $H_{a',b'}$ can be computed in time $\tilde{O}(p^2)$.
\end{lemma}
\fi

\begin{rem}
Here, we list possible variants of Main Algorithm with $q=p^2$, and provide upper-bounds of their complexities;
Our bounds for Main Algorithm in Theorem \ref{thm:main1} does not exceed each of the bounds below: 
\begin{enumerate}
\item Brute force on $(a,b) \in \mathbb{F}_{q}^2$.
For each $(a,b) \in \mathbb{F}_{q}^2$, test $\mathrm{gcd}(f,f') = 1$ or not in constant time.
If $\mathrm{gcd}(f,f') =1$, compute $M_{a,b}$ in time $\tilde{O}(\sqrt{p})$.
The total complexity is $\tilde{O}(q^2 \sqrt{p})$.

\item For each $a \in \mathbb{F}_{q}$, compute $M_{a,b}$ in time $O(p^2)$ keeping $b$ as a parameter.
Then brute force on $b$:
Test $\mathrm{gcd}(f,f') = 1$ or not in constant time.
If $\mathrm{gcd}(f,f') =1$, evaluate it to $M_{a,b}$ in time $\tilde{O}(p)$.
The total complexity is $O(q (p^2 +q) )$.

\item For each $a \in \mathbb{F}_{q}$, compute $M_{a,b}$ in time $O(p^2)$ keeping $b$ as a parameter.
Compute the gcd of non-zero entries of $M_{a,b}$ in time $\tilde{O}(p)$.
Compute the roots of the gcd in time $\tilde{O}(p^2)$.
For each root $b$, test $\mathrm{gcd}(f,f') = 1$ or not in constant time.
The total complexity is $\tilde{O}(q p^2 )$.
\item Compute $M_{a,b}$ in time $O(p^3)$ keeping $a$ and $b$ as parameters.
Then brute force on $(a,b)$:
Test $\mathrm{gcd}(f,f') = 1$ or not in constant time.
If $\mathrm{gcd}(f,f') =1$, evaluate it to $M_{a,b}$ in time $O(p)$.
The total complexity is $O(p^3 + q^2 p )$.
\item Compute $M_{a,b}$ in time $O(p^3)$ keeping $a$ and $b$ as parameters.
Then brute force on $a$:
Evaluate it to $M_{a,b}$ in time $O(p)$, and compute the gcd of non-zero entries of $M_{a,b}$ in time $\tilde{O}(p)$.
Compute the roots of the gcd in time $\tilde{O}(p^2)$.
For each root $b$, test $\mathrm{gcd}(f,f') = 1$ or not in constant time.
The total complexity is $\tilde{O}(p^3 + q p^2 )$.
\end{enumerate}
\end{rem}

\subsection{Implementation and computational results}\label{subsec:imp}

We implemented Main Algorithm on Magma V2.26-10 on a PC with macOS Monterey 12.0.1, at 2.6 GHz CPU 6 Core (Intel Core i7) and 16GB memory (cf.\ \cite{HPkudo} for the source code ``NKT\_enum3.txt'').
Executing the implemented algorithm, we obtain Theorem \ref{thm:main22}.
Our computational results are summarized in Tables \ref{table:1} -- \ref{table:4} below.
% \begin{thm}\label{thm:main2}
% For every prime $p$ with $17 \leq p < 1000$, the number of $\overline{\mathbb{F}_{p}}$-isomorphism classes of s.sp.\ $H_{a,b}$'s are summarized in Tables \ref{table:1} -- \ref{table:4} below.
% In particular, for each $17 \leq p < 1000$ with $p \equiv 2 \bmod{3}$ (resp.\ $p \equiv 1 \bmod{3}$), there exists (resp.\ does not exist) $(a,b) \in \mathbb{F}_{p^2}^2$ such that $H_{a,b}$ is a s.sp.\ hyperelliptic curve.
% \end{thm}

%\begin{theorem}\label{thm:main2}
%\begin{enumerate}
%\item[(1)] For every prime $p$ with $17 \leq p < 1000$ and $p \equiv 2 \pmod{3}$, there exists $(a,b) \in \mathbb{F}_{p^2}$ such that $H_{a,b}$ is a s.sp.\ hyperelliptic curve.
%\item[(2)] For every prime $p$ with $17 \leq p < 200$, the number of $\mathbb{F}_{p^2}$-isomorphism %classes of s.sp.\ $H_{a,b}$'s are summarized in a table at \cite{HPkudo}.
%In particular, for each $17 \leq p < 200$ with $p \equiv 1 \pmod{3}$, there does not exist $(a,b) \in \mathbb{F}_{p^2}$ such that $H_{a,b}$ is a s.sp.\ hyperelliptic curve.
%\end{enumerate}
%\end{theorem}

% We can easily increase the upper bound on p in Theorem 3.3.1. For example, on the PC de-
% scribed above, computing all the s.sp. Ha,b’s took 13,226 seconds (about 64 hours), and finding a
% single example of a s.sp. Ha,b took only 98 seconds, for every p between 17 and 1000 .

We can easily increase the upper bound on $p$ in Theorem \ref{thm:main22}.
For example, on the PC described above, computing the $\overline{\mathbb{F}_p}$-isomorphicm classes of s.sp.\ $H_{a,b}$'s with $a,b \in \mathbb{F}_{p^2}$ for all $17 \leq p < 1000$ took 13,226 seconds (about 3.7 hours) in total, and the execution for $p=997$ took only 196 seconds.

\if 0
\begin{table}[t]
\centering{
\caption{Computational results}
\label{table:3}
% \scalebox{0.97}{
\begin{tabular}{c||c|c|c||c||c|c|c} \hline
$p$ & $p \bmod{3}$ & S.sp.\ or not & $\# C_p ( \mathbb{F}_{p^2} )$  & $p$ & $p \bmod{3}$ & S.sp.\ or not & $\# C_p ( \mathbb{F}_{p^2} )$ \\ \hline
$17$ & $2$ & $1$ & $10$     & $103$ & $1$ & Not S.sp. & $1938$   \\ \hline
$19$ & $1$ & $0$ & $66$ (Max.)  & $107$ & $2$ & S.sp. & $2586$ (Max.)  \\ \hline
$23$ & $2$ & $1$ & $48$     & $109$ & $1$ & S.sp. & $3234$ (Max.)  \\ \hline
$29$ & $2$ & $1$ & $192$   &  $113$ & $2$ &  S.sp. & $3954$ (Max.) \\ \hline
$31$ & $1$ & S.sp. & $210$ (Max.)  &  $127$ & $1$ & Not S.sp. & $3648$ \\ \hline
$37$ & $1$ & S.sp. & $426$ (Max.)  &  $131$ & $2$ & Not S.sp. & $4368$ \\ \hline
$41$ & $2$ & Not S.sp. & $336$     &  $137$ & $2$ & S.sp.  &  $5610$ (Max.)    \\ \hline
$43$ & $1$ & S.sp. &  $714$ (Max.) &  $139$ & $1$ &  Not S.sp. & $5376$  \\ \hline
$47$ & $2$ & S.sp. & $1074$ (Max.)  & $149$ & $2$ & Not S.sp.  & $6384$ \\ \hline
$53$ & $2$ & Not S.sp. & $1146$    &  $151$ & $1$ & S.sp.  & $7554$ (Max.)  \\ \hline
$59$ & $2$ & S.sp. & $1334$          & $157$ & $1$ & S.sp.  & $8634$ (Max.) \\ \hline
$61$ & $1$ & S.sp. & $2010$ (Max.)  &  $163$ & $1$ & Not S.sp.  & $9408$ \\ \hline
$67$ & $1$ & S.sp. & $2010$ (Max.)  &  $167$ & $2$ & Not S.sp.  & $9408$ \\ \hline
$71$ & $2$ & S.sp. & $2010$ (Max.)  &  $173$ & $2$ & Not S.sp.  & $9408$ \\ \hline
$73$ & $1$ & S.sp. & $2010$ (Max.)  &  $179$ & $2$ & Not S.sp.  & $9408$ \\ \hline
$79$ & $1$ & S.sp. & $2010$ (Max.)  &  $181$ & $1$ & Not S.sp.  & $9408$ \\ \hline
$83$ & $2$ & S.sp. & $2010$ (Max.)  &  $191$ & $2$ & Not S.sp.  & $9408$ \\ \hline
$89$ & $2$ & S.sp. & $2010$ (Max.)  &  $193$ & $1$ & Not S.sp.  & $9408$ \\ \hline
$97$ & $1$ & S.sp. & $2010$ (Max.)  &  $197$ & $2$ & Not S.sp.  & $9408$ \\ \hline
$101$ & $2$ & S.sp. & $2010$ (Max.)  &  $199$ & $1$ & Not S.sp.  & $9408$ \\ \hline
\end{tabular}
% }
}
\end{table}
\fi
\if 0
In the below tables, we show computational results for $17 \leq p<1000$, where
%``Num of $(a,b)$'' means the number of $(a,b)$ such that $H_{a,b}$ is a s.sp.\ hyperelliptic curve, and 
``Num.\ of $H_{a,b}$'' denotes the number of $\overline{\mathbb{F}_p}$-isomorphism classes of obtained $H_{a,b}$'s.
The time taken in each step (e.g., ``Step 1'' in the table means the time taken for Step 1 of the main computations of our code) and the total time (denoted by ``Total time'') are also shown in seconds.
%Note that Step 3 is skipped if $(a,b)$ is not found in Step 2.
\fi

%\newpage
%\renewcommand{\arraystretch}{0.9}
\begin{table}[H]
\centering{
\caption{Computational results for $17 \leq p < 200$ obtained by the execution of Main Algorithm in Theorem \ref{thm:main1}.
``Num.\ of $H_{a,b}$'' denotes the number of $\overline{\mathbb{F}_p}$-isomorphism classes of obtained $H_{a,b}$'s.
The time taken in each step (e.g., ``Step 1'' in the table means the time taken for Step 1 of Main Algorithm) and the total time (denoted by ``Total time'') are also shown in seconds.
``Deg.\ of gcd." denotes the degree of the gcd of the resultants computed in Step 2.}
\label{table:1}
% \vspace{-8pt}
\scalebox{1}{
\begin{tabular}{c||c|c|c||r|r|r||r} \hline
$p$ & $p \bmod{3}$ & Num.\ of $H_{a,b}$ & Deg.\ of gcd & Step 1 & Step 2 & Step 3 & Total time \\ \hline
17 & 2 & 1 & 1 &    0.010 & $<$ 0.020 & $<$ 0.001 & 0.020 \\ \hline
19 & 1 & 0 & 0 &    0.010 & 0.010 & - & 0.010 \\ \hline
23 & 2 & 2 & 2 &    < 0.001 & 0.010 & < 0.001 & 0.010 \\ \hline
29 & 2 & 1 & 1 &    < 0.001 & 0.020 & < 0.001 & 0.020 \\ \hline
31 & 1 & 0 & 0 &    < 0.001 & 0.020 & - & 0.020 \\ \hline
37 & 1 & 0 & 0 &    < 0.001 & 0.010 & - & 0.010 \\ \hline
41 & 2 & 4 & 4 &    < 0.001 & < 0.001 & 0.010 & 0.010 \\ \hline
43 & 1 & 0 & 0 &    < 0.001 & < 0.001 & - & < 0.001 \\ \hline
47 & 2 & 5 & 5 &    < 0.001 & 0.010 & 0.010 & 0.010 \\ \hline
53 & 2 & 4 & 6 &    < 0.001 & 0.010 & <0.001 & 0.010 \\ \hline
59 & 2 & 6 & 9 &    < 0.001 & 0.010 & 0.010 & 0.020 \\ \hline
61 & 1 & 0 & 0 &    < 0.001 & 0.001 & - & 0.010 \\ \hline
67 & 1 & 0 & 0 &    < 0.001 & 0.010 & - & 0.010 \\ \hline
71 & 2 & 9 & 10 &    < 0.001 & 0.010 & 0.010 & 0.020 \\ \hline
73 & 1 & 0 & 0 &    < 0.001 & 0.010 & - & 0.010 \\ \hline
79 & 1 & 0 & 0 &    0.010 & 0.010 & - & 0.020 \\ \hline
83 & 2 & 8 & 11 &    < 0.001 & 0.020 & 0.020 & 0.040 \\ \hline
89 & 2 & 7 & 7 &    < 0.001 & 0.030 & 0.010 & 0.030 \\ \hline
97 & 1 & 0 & 0 &    0.010 & 0.030 & - & 0.040 \\ \hline
101 & 2 & 8 & 10 &   0.010 & 0.060 & 0.020 & 0.090 \\ \hline
103 & 1 & 0 & 0 &   < 0.001 & 0.010 & - & 0.010 \\ \hline
107 & 2 & 4 & 5 &   0.010 & 0.020 & 0.010 & 0.040 \\ \hline
109 & 1 & 0 & 0 &   < 0.001 & 0.010 & - & 0.010 \\ \hline
113 & 2 & 14 & 15 &  < 0.001 & 0.020 & 0.030 & 0.050 \\ \hline
127 & 1 & 0 & 0 &    < 0.001 & 0.030 & - & 0.030 \\ \hline
131 & 2 & 18 & 22 &    0.010 & 0.040 & 0.040 & 0.80 \\ \hline
137 & 2 & 12 & 13 &   0.010 & 0.040 & 0.020 & 0.080 \\ \hline
139 & 1 & 0 & 0 &    < 0.001 & 0.040 & - & 0.040 \\ \hline
149 & 2 & 18 & 22 &    0.010 & 0.060 & 0.060 & 0.130 \\ \hline
151 & 1 & 0 & 0 &    0.010 & 0.050 & - & 0.060 \\ \hline
157 & 1 & 0 & 0 &    0.010 & 0.060 & - & 0.070 \\ \hline
163 & 1 & 0 & 0 &    0.010 & 0.070 & - & 0.080 \\ \hline
167 & 2 & 26 & 29 &    0.010 & 0.100 & 0.050 & 0.160 \\ \hline
173 & 2 & 22 & 22 &    0.020 & 0.110 & 0.060 & 0.190 \\ \hline
179 & 2 & 17 & 17 &    0.020 & 0.130 & 0.040 & 0.180 \\ \hline
181 & 1 & 0 & 0 &    0.010 & 0.100 & - & 0.120 \\ \hline
191 & 2 & 32 & 33 &    0.020 & 0.180 & 0.050 & 0.250 \\ \hline
193 & 1 & 0 & 0 &    0.020 & 0.140 & - & 0.160 \\ \hline
197 & 2 & 21 & 24 &    0.020 & 0.140 & < 0.001 & 0.160 \\ \hline
199 & 1 & 0 & 0 &    0.020 & 0.150 & - & 0.170 \\ \hline
\end{tabular}
}
}
\end{table}
%\renewcommand{\arraystretch}{1}
% \vspace{-10pt}

\begin{table}[H]
\centering{
\caption{Computational results for $200 \leq p < 450$ obtained by the execution of Main Algorithm in Theorem \ref{thm:main1}.
The notations are same as in Table \ref{table:1}.}
\label{table:2}
% \vspace{-8pt}
\scalebox{1}{
\begin{tabular}{c||c|c|c||r|r|r||r} \hline
$p$ & $p \bmod{3}$ & Num.\ of $H_{a,b}$ & Deg.\ of gcd & Step 1 & Step 2 & Step 3 & Total time \\ \hline
211 & 1 & 0 & 0 &    0.020 & 0.210 & - & 0.230 \\ \hline
223 & 1 & 0 & 0 &    0.030 & 0.270 & - & 0.290 \\ \hline
227 & 2 & 29 & 35 & 0.020 & 0.350 & 0.090 & 0.470 \\ \hline
229 & 1 & 0 & 0 &    0.020 & 0.300 & - & 0.320 \\ \hline
233 & 2 & 30 & 31 & 0.020 & 0.410 & 0.080 & 0.510 \\ \hline
239 & 2 & 36 & 40 & 0.030 & 0.450 & 0.070 & 0.550 \\ \hline
241 & 1 & 0 & 0 & 0.020 & 0.390 & - & 0.420 \\ \hline
251 & 2 & 28 & 31 & 0.030 & 0.570 & 0.060 & 0.660 \\ \hline
257 & 2 & 28 & 31 & 0.040 & 0.640 & 0.080 & 0.770 \\ \hline
263 & 2 & 58 & 60 & 0.030 & 0.720 & 0.150 & 0.910 \\ \hline
269 & 2 & 32 & 35 & 0.040 & 0.770 & 0.070 & 0.880 \\ \hline
271 & 1 & 0 & 0 & 0.050 & 0.650 & - & 0.700 \\ \hline
277 & 1 & 0 & 0 & 0.050 & 0.700 & - & 0.750 \\ \hline
281 & 2 & 34 & 37 & 0.060 & 0.960 & 0.080 & 1.110 \\ \hline
283 & 1 & 0 & 0 & 0.050 & 0.790 & - & 0.850 \\ \hline
293 & 2 & 29 & 31 & 0.060 & 1.170 & 0.080 & 1.310 \\ \hline
307 & 1 & 0 & 0 & 0.070 & 1.150 & - & 1.220 \\ \hline
311 & 2 & 62 & 69 & 0.070 & 1.150 & 0.120 & 1.770 \\ \hline
313 & 1 & 0 & 0 & 0.070 & 1.290 & - & 1.360 \\ \hline
317 & 2 & 21 & 24 & 0.060 & 1.670 & 0.070 & 1.800 \\ \hline
331 & 1 & 0 & 0 & 0.070 & 1.630 & - & 1.700 \\ \hline
337 & 1 & 0 & 0 & 0.080 & 1.780 & - & 1.860 \\ \hline
347 & 2 & 61 & 70 & 0.090 & 2.590 & 0.180 & 2.870 \\ \hline
349 & 1 & 0 & 0 & 0.090 & 2.100 & - & 2.200 \\ \hline
353 & 2 & 25 & 35 & 0.100 & 2.860 & 0.060 & 3.020 \\ \hline
359 & 2 & 55 & 61 & 0.110 & 3.030 & 0.090 & 3.230 \\ \hline
367 & 1 & 0 & 0 & 0.120 & 2.680 & - & 2.810 \\ \hline
373 & 1 & 0 & 0 & 0.120 & 2.870 & - & 3.000 \\ \hline
379 & 1 & 0 & 0 & 0.150 & 3.120 & - & 3.270 \\ \hline
383 & 2 & 72 & 75 & 0.130 & 4.110 & 0.150 & 4.390 \\ \hline
389 & 2 & 49 & 50 & 0.130 & 4.380 & 0.150 & 4.670 \\ \hline
397 & 1 & 0 & 0 & 0.140 & 3.760 & - & 3.900 \\ \hline
401 & 2 & 44 & 48 & 0.150 & 5.020 & 0.120 & 5.290 \\ \hline
409 & 1 & 0 & 0 & 0.150 & 4.330 & - & 4.490 \\ \hline
419 & 2 & 61 & 63 & 0.180 & 6.070 & 0.160 & 6.410 \\ \hline
421 & 1 & 0 & 0 & 0.170 & 4.850 & - & 5.020 \\ \hline
431 & 2 & 72 & 79 & 0.200 & 6.910 & 0.140 & 7.260 \\ \hline
433 & 1 & 0 & 0 & 0.170 & 5.580 & - & 5.760 \\ \hline
439 & 1 & 0 & 0 & 0.190 & 5.840 & - & 6.020 \\ \hline
443 & 2 & 50 & 52 & 0.190 & 7.780 & 0.150 & 8.130 \\ \hline
449 & 2 & 38 & 44 & 0.200 & 8.140 & 0.110 & 8.450 \\ \hline

\end{tabular}
}
}
\end{table}

\begin{table}[H]
\centering{
\caption{Computational results for $450 \leq p < 750$ obtained by the execution of Main Algorithm in Theorem \ref{thm:main1}.
The notations are same as in Table \ref{table:1}.}
%Results for $500 \leq p < 800$.}
\label{table:3}
% \vspace{-8pt}
\scalebox{1}{
\begin{tabular}{c||c|c|c||r|r|r||r} \hline
$p$ & $p \bmod{3}$ & Num.\ of $H_{a,b}$ & Deg.\ of gcd & Step 1 & Step 2~~ & Step 3~~ & Total time \\ \hline
457 & 1 & 0 & 0 & 0.210 & 6.980 & - & 7.200 \\ \hline
461 & 2 & 54 & 59 & 0.240 & 9.330 & 0.140 & 9.690 \\ \hline
463 & 1 & 0 & 0 & 0.220 & 7.450 & - & 7.680 \\ \hline
467 & 2 & 73 & 78 & 0.220 & 9.750 & 0.180 & 10.140 \\ \hline
479 & 2 & 82 & 93 & 0.240 & 10.950 & 0.180 & 11.370 \\ \hline
487 & 1 & 0 & 0 & 0.290 & 9.130 & - & 9.420 \\ \hline
491 & 2 & 79 & 84 & 0.300 & 12.200 & 0.200 & 12.710 \\ \hline
499 & 1 & 0 & 0 & 0.270 & 10.290 & - & 10.570 \\ \hline
503 & 2 & 93 & 105 & 0.280 & 13.560 & 0.210 & 14.060 \\ \hline
509 & 2 & 59 & 64 & 0.290 & 14.100 & 0.190 & 14.590 \\ \hline
521 & 2 & 70 & 78 & 0.300 & 15.690 & 0.200 & 16.200 \\ \hline
523 & 1 & 0 & 0 & 0.310 & 12.330 & - & 12.650 \\ \hline
541 & 1 & 0 & 0 & 0.350 & 14.270 & - & 14.630 \\ \hline
547 & 1 & 0 & 0 & 0.350 & 15.240 & - & 15.600 \\ \hline
557 & 2 & 67 & 68 & 0.380 & 20.610 & 0.190 & 21.190 \\ \hline
563 & 2 & 75 & 84 & 0.380 & 21.600 & 0.190 & 22.170 \\ \hline
569 & 2 & 78 & 87 & 0.390 & 22.660 & 0.210 & 23.270 \\ \hline
571 & 1 & 0 & 0 & 0.400 & 18.010 & - & 18.410 \\ \hline
577 & 1 & 0 & 0 & 0.420 & 19.000 & - & 19.430 \\ \hline
587 & 2 & 89 & 94 & 0.430 & 26.010 & 0.240 & 26.680 \\ \hline
593 & 2 & 94 & 101 & 0.450 & 26.740 & 0.280 & 27.470 \\ \hline
599 & 2 & 108 & 122 & 0.460 & 28.100 & 0.240 & 28.810 \\ \hline
601 & 1 & 0 & 0 & 0.460 & 22.440 & - & 22.910 \\ \hline
607 & 1 & 0 & 0 & 0.480 & 23.300 & - & 23.790 \\ \hline
613 & 1 & 0 & 0 & 0.490 & 24.220 & - & 24.720 \\ \hline
617 & 2 & 60 & 61 & 0.510 & 32.220 & 0.200 & 32.940 \\ \hline
619 & 1 & 0 & 0 & 0.510 & 25.350 & - & 25.870 \\ \hline
631 & 1 & 0 & 0 & 0.630 & 27.620 & - & 28.250 \\ \hline
641 & 2 & 84 & 95 & 0.680 & 37.300 & 0.240 & 38.230 \\ \hline
643 & 1 & 0 & 0 & 0.670 & 29.660 & - & 30.340 \\ \hline
647 & 2 & 106 & 118 & 0.670 & 38.640 & 0.250 & 39.570 \\ \hline
653 & 2 & 54 & 56 & 0.690 & 39.980 & 0.160 & 40.840 \\ \hline
659 & 2 & 89 & 99 & 0.700 & 41.270 & 0.280 & 42.260 \\ \hline
661 & 1 & 0 & 0 & 0.710 & 33.170 & - & 33.880 \\ \hline
673 & 1 & 0 & 0 & 0.850 & 36.200 & - & 37.060 \\ \hline
677 & 2 & 59 & 63 & 0.880 & 46.260 & 0.190 & 47.340 \\ \hline
683 & 2 & 102 & 106 & 0.780 & 48.000 & 0.310 & 49.100 \\ \hline
691 & 1 & 0 & 0 & 0.810 & 40.050 & - & 40.870 \\ \hline
701 & 2 & 68 & 78 & 0.940 & 53.350 & 0.210 & 54.510 \\ \hline
709 & 1 & 0 & 0 & 0.790 & 44.360 & - & 45.160 \\ \hline
719 & 2 & 112 & 122 & 1.010 & 59.640 & 0.250 & 60.910 \\ \hline
727 & 1 & 0 & 0 & 0.920 & 49.700 & - & 50.630 \\ \hline
733 & 1 & 0 & 0 & 0.940 & 3820.520 & - & 3821.470 \\ \hline
739 & 1 & 0 & 0 & 4.720 & 2682.300 & - & 2687.080 \\ \hline
743 & 2 & 124 & 131 & 4.770 & 158.440 & 0.510 & 163.780 \\ \hline
\end{tabular}
}
}
\end{table}

\begin{table}[H]
\centering{
\caption{Computational results for $750 \leq p < 1000$ obtained by the execution of Main Algorithm in Theorem \ref{thm:main1}.
The notations are same as in Table \ref{table:1}.}
\label{table:4}
\vspace{-2pt}
\scalebox{0.93}{
\begin{tabular}{c||c|c|c||r|r|r||r} \hline
$p$ & $p \bmod{3}$ & Num.\ of $H_{a,b}$ & Deg.\ of gcd & Step 1 & Step 2~~ & Step 3~ & Total time \\ \hline

751 & 1 & 0 & 0 & 1.290 & 76.070 & - & 77.370 \\ \hline
757 & 1 & 0 & 0 & 1.260 & 79.710 & - & 80.990 \\ \hline
761 & 2 & 129 & 137 & 1.330 & 102.390 & 0.560 & 104.300 \\ \hline
769 & 1 & 0 & 0 & 1.330 & 85.700 & - & 87.050 \\ \hline
773 & 2 & 106 & 109 & 1.390 & 108.570 & 0.490 & 110.470 \\ \hline
787 & 1 & 0 & 0 & 1.480 & 94.960 & - & 96.460 \\ \hline
797 & 2 & 90 & 95 & 1.530 & 125.590 & 0.430 & 127.570 \\ \hline
809 & 2 & 94 & 107 & 1.690 & 136.260 & 0.390 & 138.360 \\ \hline
811 & 1 & 0 & 0 & 1.560 & 105.830 & - & 107.400 \\ \hline
821 & 2 & 107 & 116 & 1.830 & 141.670 & 0.470 & 143.990 \\ \hline
823 & 1 & 0 & 0 & 1.610 & 113.860 & - & 115.490 \\ \hline
827 & 2 & 120 & 132 & 1.680 & 147.850 & 0.530 & 150.080 \\ \hline
829 & 1 & 0 & 0 & 1.690 & 116.740 & - & 118.450 \\ \hline
839 & 2 & 119 & 131 & 1.900 & 155.280 & 0.360 & 157.550 \\ \hline
853 & 1 & 0 & 0 & 1.840 & 132.260 & - & 134.120 \\ \hline
857 & 2 & 121 & 124 & 1.880 & 152.210 & 0.410 & 154.520 \\ \hline
859 & 1 & 0 & 0 & 1.400 & 97.910 & - & 99.320 \\ \hline
863 & 2 & 138 & 151 & 1.430 & 127.050 & 0.370 & 128.870 \\ \hline
877 & 1 & 0 & 0 & 1.510 & 107.420 & - & 108.940 \\ \hline
881 & 2 & 112 & 119 & 1.530 & 138.870 & 0.350 & 140.760 \\ \hline
883 & 1 & 0 & 0 & 1.540 & 110.550 & - & 112.100 \\ \hline
887 & 2 & 156 & 161 & 1.580 & 142.260 & 0.400 & 144.250 \\ \hline
*907 & 1 & 0 & 1 & 1.640 & 122.450 & 0.030 & 124.140 \\ \hline
911 & 2 & 182 & 198 & 1.680 & 163.200 & 0.440 & 165.340 \\ \hline
919 & 1 & 0 & 0 & 1.710 & 130.080 & - & 131.810 \\ \hline
929 & 2 & 89 & 92 & 2.050 & 172.330 & 0.240 & 174.610 \\ \hline
937 & 1 & 0 & 0 & 1.820 & 140.490 & - & 142.330 \\ \hline
941 & 2 & 126 & 139 & 1.840 & 181.600 & 0.400 & 183.860 \\ \hline
947 & 2 & 95 & 103 & 1.890 & 185.830 & 0.320 & 188.060 \\ \hline
953 & 2 & 109 & 114 & 2.280 & 192.910 & 0.340 & 195.540 \\ \hline
967 & 1 & 0 & 0 & 2.010 & 162.060 & - & 164.090 \\ \hline
971 & 2 & 122 & 133 & 2.290 & 206.790 & 0.360 & 210.190 \\ \hline
977 & 2 & 120 & 123 & 2.320 & 212.800 & 0.420 & 215.550 \\ \hline
983 & 2 & 115 & 125 & 2.360 & 218.530 & 0.360 & 221.270 \\ \hline
991 & 1 & 0 & 0 & 2.450 & 184.370 & - & 186.840 \\ \hline
997 & 1 & 0 & 0 & 2.590 & 192.880 & - & 195.500 \\ \hline
\end{tabular}
}
}
\end{table}
\vspace{-5mm}

For each $p$ with $p \equiv 2 \pmod{3}$ in the tables, a s.sp.\ $H_{a,b}$ with $(a,b) \in \mathbb{F}_{p^2}^2$ do exists.
On the other hand, for any $p$ with $p \equiv 1 \pmod{3}$, there is no $(a,b) \in \mathbb{F}_{p^2}^2$ such that $H_{a,b}$ is a s.sp. hyperelliptic curve.
The degree of the gcd of the resultants computed in Step 2 and the number of isomorphism classes of obtained $H_{a,b}$'s might follow $O(p)$, which implies that the complexity of Step 2 is $\tilde{O}(p^3)$ in practice, see Theorem \ref{thm:main1}.

Most of time is spent at Step 2.
We see from the tables that Steps 1 and 2 might follow our estimations $O(p^3)$ and $\tilde{O}(p^3)$ respectively.
As for Step 3, our estimation in the proof of Theorem \ref{thm:main1} is $O(p^2)$, but it might be $O(p)$ in practice since the number of $(a,b)$ for which $H_{a,b}$ is a s.sp.\ hyperelliptic curve would be $O(p)$.
%On the other hand, the cost of Step 3 does not exceed our estimation $O(p^2)$, but heavily depends on the number of $(a,b)$ for which $H_{a,b}$ is a superspecial hyperelliptic curve.
Note that for some $p$, strangely it took a long time for Step 1 or Step 2 compared with other cases (due to memory issue?): $p=733$, $739$ and $743$.
%, and Step 3 for $p=683$, $743$, $827$ and $977$. 

%=====================================
\section{Concluding remarks}
%=====================================

\if 0
In this extended abstract, an algorithm for enumerating s.sp.\ hyperelliptic curves of genus four was proposed.
The algorithm can find a single s.sp.\ curve in time $\tilde{O}(p^3)$, and enumerates all of s.sp.\ $H_{a,b}$ also in time $\tilde{O}(p^3)$.
% From our computational results, we obtain the following conjecture:
% \begin{conjecture}
% We have the following:
% \begin{enumerate}
% \item For each $23 < p \leq 200$ with $p \equiv 1 \pmod{3}$, there does not exist $(a,b) \in \mathbb{F}_{p^2}$ such that $H_{a,b}$ is superspecial.
% \item If $p \equiv 2 \pmod{3}$, then there exists $(a,b) \in \mathbb{F}_{p^2}$ such that $H_{a,b}$ is superspecial.
% \end{enumerate}
% \end{conjecture}
A future work is to theoretically prove the existence (resp.\ non-existence) of a s.sp.\ $H_{a,b}$ for arbitrary $p>23$ with $p \equiv 2 \pmod{3}$ (resp.\ $p \equiv 1 \pmod{3}$).
\fi

We realized an algorithm with complexity $\tilde{O}(p^4)$ in theory but $\tilde{O}(p^3)$ in practice, specific to producing s.sp.\ hyperelliptic curves of genus $4$, restricting to a parametric family of curves $H_{a,b}$ given by $y^2 = x^{10} + x^7 + a x^4 + b x$.
Our case is included in the case where $\overline{\rm Aut}(H) \supset \mathbb{Z}_3$ in Theorem \ref{thm:app}, while a recent work~\cite{OKH22} presented at WAIFI2022 treats the case where $\overline{\rm Aut}(H) \supset V_4:=\mathbb{Z}_2 \times \mathbb{Z}_2$ (Klein 4-group).
Our algorithm cannot enumerate all s.sp.\ hyperelliptic curves of genus $4$ different from the algorithm in \cite{KH18} at WAIFI2018, but it is expected from Theorem \ref{thm:main22} to surely find such a curve for arbitrary $p \geq 17$ with $p \equiv 2 \bmod{3}$.
By executing the algorithm on Magma, we succeeded in enumerating s.sp.\ hyperelliptic curves $H_{a,b}$ with $(a,b) \in \mathbb{F}_{p^2}^2$ for every $p$ between $17$ to $1000$, which is much larger than $p=17,19$ as in the enumeration of \cite{KH18}.
%Our algorithm works even more efficiently than the algorithm in \cite{KHH} at ANTS-XIV for non-hyperelliptic case with complexity $\tilde{O}(p^4)$ (arithmetic operations in $\mathbb{F}_{p^4}$), but different from \cite{KHH}, our algorithm could not find any s.sp. curve in the case where $p \equiv 1 \bmod{3}$.

A future work is to construct an algorithm for finding s.sp.\ hyperelliptic curves of genus $4$ in the case where $p \equiv 1 \bmod{3}$, with complexity lower than the algorithm of \cite{KH18}.
(Focusing on the case where $\overline{\rm Aut}(H) \cong \mathbb{Z}_4$ in Theorem \ref{thm:app} might provide a solution.)
%A future work is to theoretically prove the existence (resp.\ non-existence) of a s.sp.\ $H_{a,b}$ for arbitrary $p\geq 17$ with $p \equiv 2 \pmod{3}$ (resp.\ $p \equiv 1 \pmod{3}$).
Extending our method to higher genus cases is also an interesting problem.

\subsection*{Acknowledgement}
The authors thank Shushi Harashita and Ryo Ohashi for hepful comments.
This work was supported by JSPS Grantin-Aid for Young Scientists 20K14301 and JST CREST Grant Number JPMJCR2113.

% \subsubsection{Acknowledgements} 

%
% ---- Bibliography ----
%
% BibTeX users should specify bibliography style 'splncs04'.
% References will then be sorted and formatted in the correct style.
%
% \bibliographystyle{splncs04}
% \bibliography{mybibliography}
%
%\renewcommand{\baselinestretch}{0.9} 

\renewcommand{\baselinestretch}{1} 

\if 0

\newpage

\renewcommand{\arraystretch}{0.85}

\begin{table}[h]
\centering{
\caption{Computational results for $17 \leq p<200$ obtained by executing Main Algorithm over Magma.
``Num of $(a,b)$'' means the number of $(a,b)$ such that $H_{a,b}$ is a s.sp.\ hyperelliptic curve, and ``Num.\ of $H_{a,b}$'' denotes the number of isomorphism classes of obtained $H_{a,b}$'s.
The time taken in each step (e.g., ``Step 1'' in the table means the time taken for Step 1 of Main Algorithm) and the total time (denoted by ``Total time'') are also shown in seconds.
Note that Step 3 is skipped if $(a,b)$ is not found in Step 2.}
\label{table:3}
\vspace{-8pt}
\scalebox{0.87}{
\begin{tabular}{c||c|c|c||c|c|c||c} \hline
$p$ & $p \bmod{3}$ & Num.\ of $(a,b)$ & Num.\ of $H_{a,b}$ & Step 1 & Step 2 & Step 3 & Total time \\ \hline
17 & 2 & 0 & 1 &    0.010 & < 0.001 & < 0.001 & 0.010 \\ \hline
19 & 1 & 0 & 0 &    < 0.001 & 0.010 & - & 0.010 \\ \hline
23 & 2 & 2 & 2 &    < 0.001 & 0.010 & 0.020 & 0.030 \\ \hline
29 & 2 & 1 & 1 &    < 0.001 & 0.020 & < 0.001 & 0.020 \\ \hline
31 & 1 & 0 & 0 &    < 0.001 & 0.020 & - & 0.020 \\ \hline
37 & 1 & 0 & 0 &    < 0.001 & < 0.001 & - & < 0.001 \\ \hline
41 & 2 & 4 & 4 &    < 0.001 & < 0.001 & 0.090 & 0.090 \\ \hline
43 & 1 & 0 & 0 &    < 0.001 & < 0.001 & - & 0.010 \\ \hline
47 & 2 & 5 & 5 &    < 0.001 & < 0.001 & 0.210 & 0.210 \\ \hline
53 & 2 & 4 & 4 &    < 0.001 & < 0.001 & 0.100 & 0.100 \\ \hline
59 & 2 & 6 & 6 &    < 0.001 & < 0.001 & 0.250 & 0.250 \\ \hline
61 & 1 & 0 & 0 &    < 0.001 & < 0.001 & - & < 0.001 \\ \hline
67 & 1 & 0 & 0 &    < 0.001 & 0.010 & - & 0.010 \\ \hline
71 & 2 & 9 & 9 &    < 0.001 & 0.020 & 0.710 & 0.730 \\ \hline
73 & 1 & 0 & 0 &    < 0.001 & 0.010 & - & 0.010 \\ \hline
79 & 1 & 0 & 0 &    < 0.001 & 0.020 & - & 0.020 \\ \hline
83 & 2 & 8 & 8 &    < 0.001 & 0.020 & 0.550 & 0.570 \\ \hline
89 & 1 & 7 & 7 &    < 0.001 & 0.030 & 0.370 & 0.400 \\ \hline
97 & 1 & 0 & 0 &    < 0.001 & 0.030 & - & 0.030 \\ \hline
101 & 2 & 8 & 8 &   0.010 & 0.060 & 0.440 & 0.510 \\ \hline
103 & 1 & 0 & 0 &   < 0.000 & 0.010 & - & 0.010 \\ \hline
107 & 2 & 4 & 4 &   0.010 & 0.010 & 0.100 & 0.120 \\ \hline
109 & 1 & 0 & 0 &   < 0.001 & 0.020 & - & 0.020 \\ \hline
113 & 2 & 14 & 14 &  < 0.001 & 0.020 & 1.420 & 1.440 \\ \hline
127 & 1 & 0 & 0 &    < 0.001 & 0.030 & - & 0.030 \\ \hline
131 & 2 & 18 & 18 &    0.010 & 0.030 & 2.450 & 2.490 \\ \hline
137 & 2 & 12 & 12 &   0.010 & 0.050 & 0.910 & 0.980 \\ \hline
139 & 1 & 0 & 0 &    < 0.001 & 0.040 & - & 0.050 \\ \hline
149 & 2 & 18 & 18 &    0.010 & 0.060 & 2.200 & 2.270 \\ \hline
151 & 1 & 0 & 0 &    0.010 & 0.050 & - & 0.060 \\ \hline
157 & 1 & 0 & 0 &    0.010 & 0.060 & - & 0.070 \\ \hline
163 & 1 & 0 & 0 &    0.010 & 0.070 & - & 0.080 \\ \hline
167 & 2 & 26 & 26 &    0.010 & 0.100 & 5.380 & 5.490 \\ \hline
173 & 2 & 22 & 22 &    0.010 & 0.110 & 3.240 & 3.370 \\ \hline
179 & 2 & 17 & 17 &    0.010 & 0.130 & 1.980 & 2.120 \\ \hline
181 & 1 & 0 & 0 &    0.010 & 0.110 & - & 0.120 \\ \hline
191 & 2 & 32 & 32 &    0.020 & 0.170 & 8.320 & 8.510 \\ \hline
193 & 1 & 0 & 0 &    0.010 & 0.140 & - & 0.150 \\ \hline
197 & 2 & 21 & 21 &    0.010 & 0.200 & 3.010 & 3.220 \\ \hline
199 & 1 & 0 & 0 &    0.020 & 0.160 & - & 0.180 \\ \hline
\end{tabular}
}
}
\end{table}

\vspace{-10pt}
Most of time is spent at Step 3 if $(a,b)$ is found in Step 2.
We see also from Table \ref{table:3} that Step 2 might follow precisely our estimation $\tilde{O}(p^3)$.
On the other hand, the cost of Step 3 does not exceed our estimation $O(p^2)$, but heavily depends on the number of $(a,b)$.
\fi

\if 0

17 & 2 & 0 & 1 &    0.001 & 0.003 & < 0.001 & 0.003 \\ \hline
19 & 1 & 0 & 0 &    < 0.001 & 0.003 & - & 0.003 \\ \hline
23 & 2 & 2 & 2 &    0.001 & 0.007 & 0.026 & 0.034 \\ \hline
29 & 2 & 1 & 1 &    < 0.001 & 0.019 & < 0.001 & 0.020 \\ \hline
31 & 1 & 0 & 0 &    0.001 & 0.025 & - & 0.026 \\ \hline
37 & 1 & 0 & 0 &    0.002 & < 0.001 & - & 0.002 \\ \hline
41 & 2 & 4 & 4 &    0.001 & 0.002 & 0.116 & 0.119 \\ \hline
43 & 1 & 0 & 0 &    < 0.001 & 0.003 & - & 0.004 \\ \hline
47 & 2 & 5 & 5 &    0.001 & 0.004 & 0.261 & 0.266 \\ \hline
53 & 2 & 4 & 4 &    0.001 & 0.007 & 0.121 & 0.129 \\ \hline
59 & 2 & 6 & 6 &    0.003 & 0.009 & 0.330 & 0.342 \\ \hline
61 & 1 & 0 & 0 &    0.002 & 0.009 & - & 0.011 \\ \hline
67 & 1 & 0 & 0 &    0.002 & 0.015 & - & 0.018 \\ \hline
71 & 2 & 9 & 9 &    0.002 & 0.018 & 0.916 & 0.936 \\ \hline
73 & 1 & 0 & 0 &    0.002 & 0.015 & - & 0.017 \\ \hline
79 & 1 & 0 & 0 &    0.003 & 0.024 & - & 0.027 \\ \hline
83 & 2 & 8 & 8 &    0.003 & 0.031 & 0.694 & 0.728 \\ \hline
89 & 1 & 7 & 7 &    0.004 & 0.042 & 0.457 & 0.503 \\ \hline
97 & 1 & 0 & 0 &    0.005 & 0.046 & - & 0.052 \\ \hline
101 & 2 & 8 & 8 &   0.005 & 0.073 & 0.563 & 0.641 \\ \hline
103 & 1 & 0 & 0 &   0.008 & 0.022 & - & 0.031 \\ \hline
107 & 2 & 4 & 4 &   0.011 & 0.028 & 0.118 & 0.157 \\ \hline
109 & 1 & 0 & 0 &   0.007 & 0.020 & - & 0.027 \\ \hline
113 & 2 & 14 & 14 &  0.007 & 0.032 & 1.884 & 1.923 \\ \hline
127 & 1 & 0 & 0 &    0.008 & 0.042 & - & 0.051 \\ \hline
131 & 2 & 18 & 18 &    0.016 & 0.060 & 3.382 & 3.458 \\ \hline
137 & 2 & 12 & 12 &   0.011 & 0.063 & 1.275 & 1.349 \\ \hline
139 & 1 & 0 & 0 &    0.012 & 0.049 & - & 0.062 \\ \hline
149 & 2 & 18 & 18 &    0.014 & 0.086 & 3.057 & 3.159 \\ \hline
151 & 1 & 0 & 0 &    0.015 & 0.069 & - & 0.085 \\ \hline
157 & 1 & 0 & 0 &    0.017 & 0.080 & - & 0.099 \\ \hline
163 & 1 & 0 & 0 &    0.017 & 0.091 & - & 0.109 \\ \hline
167 & 2 & 26 & 26 &    0.018 & 0.134 & 7.252 & 7.406 \\ \hline
173 & 2 & 22 & 22 &    0.025 & 0.149 & 4.528 & 4.703 \\ \hline
179 & 2 & 17 & 17 &    0.026 & 0.175 & 2.755 & 2.957 \\ \hline
181 & 1 & 0 & 0 &    0.025 & 0.137 & - & 0.165 \\ \hline
191 & 2 & 32 & 32 &    0.028 & 0.233 & 11.356 & 11.618 \\ \hline
193 & 1 & 0 & 0 &    0.028 & 0.175 & - & 0.205 \\ \hline
197 & 2 & 21 & 21 &    0.030 & 0.231 & 4.337 & 4.620 \\ \hline
199 & 1 & 0 & 0 &    0.034 & 0.208 & - & 0.243 \\ \hline

$19$ & $1$ & $0$ & $66$ (Max.)   \\ \hline
$23$ & $2$ & $1$ & $48$     &  \\ \hline
$29$ & $2$ & $1$ & $192$   &   \\ \hline
$31$ & $1$ & S.sp. & $210$ (Max.)  &  \\ \hline
$37$ & $1$ & S.sp. & $426$ (Max.)  &   \\ \hline
$41$ & $2$ & Not S.sp. & $336$     &    \\ \hline
$43$ & $1$ & S.sp. &  $714$ (Max.) &   \\ \hline
$47$ & $2$ & S.sp. & $1074$ (Max.)  & \\ \hline
$53$ & $2$ & Not S.sp. & $1146$    &    \\ \hline
$59$ & $2$ & S.sp. & $1334$          & \\ \hline
$61$ & $1$ & S.sp. & $2010$ (Max.)  &  \\ \hline
$67$ & $1$ & S.sp. & $2010$ (Max.)  &   \\ \hline
$71$ & $2$ & S.sp. & $2010$ (Max.)  &   \\ \hline
$73$ & $1$ & S.sp. & $2010$ (Max.)  &  \\ \hline
$79$ & $1$ & S.sp. & $2010$ (Max.)  &  \\ \hline
$83$ & $2$ & S.sp. & $2010$ (Max.)  &   \\ \hline
$89$ & $2$ & S.sp. & $2010$ (Max.)  &  \\ \hline
$97$ & $1$ & S.sp. & $2010$ (Max.)  &   \\ \hline
$101$ & $2$ & S.sp. & $2010$ (Max.)  &   \\ \hline
$103$ & $1$ & Not S.sp. & $1938$ \\ \hline
& $107$ & $2$ & S.sp. & $2586$ (Max.) \\ \hline
$109$ & $1$ & S.sp. & $3234$ (Max.) \\ \hline
$113$ & $2$ &  S.sp. & $3954$ (Max.) \\ \hline
$127$ & $1$ & Not S.sp. & $3648$ \\ \hline
$131$ & $2$ & Not S.sp. & $4368$ \\ \hline
$137$ & $2$ & S.sp.  &  $5610$ (Max.)  \\ \hline
$139$ & $1$ &  Not S.sp. & $5376$ \\ \hline
$149$ & $2$ & Not S.sp.  & $6384$ \\ \hline
$151$ & $1$ & S.sp.  & $7554$ (Max.) \\ \hline
$157$ & $1$ & S.sp.  & $8634$ (Max.) \\ \hline
 $163$ & $1$ & Not S.sp.  & $9408$ \\ \hline
 $167$ & $2$ & Not S.sp.  & $9408$ \\ \hline
 $173$ & $2$ & Not S.sp.  & $9408$ \\ \hline
 $179$ & $2$ & Not S.sp.  & $9408$ \\ \hline
 $181$ & $1$ & Not S.sp.  & $9408$ \\ \hline
 $191$ & $2$ & Not S.sp.  & $9408$ \\ \hline
 $193$ & $1$ & Not S.sp.  & $9408$ \\ \hline
 $197$ & $2$ & Not S.sp.  & $9408$ \\ \hline
 $199$ & $1$ & Not S.sp.  & $9408$ \\ \hline
\appendix
\fi

\appendix

%============================
\section{Automorphism groups of hyperelliptic curves of genus four}\label{sec:app}
%============================

In this section, we classify hyperelliptic curves $H$ of genus $4$ over an algebraically closed field $k$ of characteristic $p \geq 7$ in terms of their automorphism groups $\mathrm{Aut}(H)$ as finite groups.
%After classifying them, we investigate the Cartier-Manin matrices of $H$ with $\# \overline{\mathrm{Aut}}(H) > 2$, for each type of $\overline{\mathrm{Aut}}(H)$.
%In particular, we explicitly determine the type of $H$ as a cyclic cover to the projective line $\mathbb{P}^1$, so that we apply Elkin's method~\cite{Elkin} reviewed in Subsection \ref{subsec:cyclic}.
Such a classification of hyperelliptic curves of given or arbitrary genus is classically an important task in algebraic geometry, and the case of characteristic zero has been completely solved by Shaska, see \cite{Shaska03} and \cite{Shaska04}. 
He applied a classification of finite subgroups of the projective linear group $\mathrm{PGL}_2$ to determining $\overline{\mathrm{Aut}}(H)$ which is canonically embedded into $\mathrm{PGL}_2$.
After determining possible types of $\overline{\mathrm{Aut}}(H)$, he found an equation (in reduced form) defining $H$ for each type and the structure of $\mathrm{Aut}(H)$, with the action of elements in $\overline{\mathrm{Aut}}(H)$ as matrices.
His idea can be applied to the positive characteristic case, with carefully considering some exceptional cases depending on $p$ and $g$ such as the existence of $p$-subgroups of $\mathrm{PGL}_2$, see Proposition \ref{prop:Faber} below.
For the case of genus $2$ and $3$, explicit classifications in characteristic $p$ are given in \cite{Igusa} (and \cite{IKO}) and \cite[Table 2]{LRS} respectively.

Analogously to \cite{Shaska03} in the characteristic zero case and \cite{LRS} in the genus-$3$ case, we shall prove the below theorem (Theorem \ref{thm:app}) for the genus-$4$ case.
This result is referred in the main contents of this paper to characterize superspecial curves obtained in previous and this works.
Some lemmas to prove Theorem \ref{thm:app} are also used in Section \ref{subsec:mainalg} to make our algorithm quite efficient.

\begin{theor}\label{thm:app}
Assume that $p \geq 7$.
The reduced automorphism group $\overline{\mathrm{Aut}(H)}$ of a hyperelliptic curve $H$ of genus $4$ over an algebraically closed field $k$ of characteristic $p$ is isomorphic to either of the $10$ finite groups listed in Table \ref{table:aut}.
In each type of $\overline{\mathrm{Aut}(H)}$, the hyperelliptic curve $H$ is isomorphic to $y^2 = f(x)$ given in the forth column of Table \ref{table:aut}, and the finite group isomorphic to $\mathrm{Aut}(H)$ is provided in the fifth column of the table.
\end{theor}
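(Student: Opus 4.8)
The plan is to follow Shaska's strategy from the characteristic-zero case \cite{Shaska03}, \cite{Shaska04} (and its genus-$3$ adaptation in \cite{LRS}), transferring it to characteristic $p \geq 7$. The starting observation is that the hyperelliptic involution $\iota$ is central in $\mathrm{Aut}(H)$, so $\overline{\mathrm{Aut}}(H) = \mathrm{Aut}(H)/\langle \iota \rangle$ acts faithfully on the quotient $H/\langle \iota \rangle \cong \mathbb{P}^1$. This realizes $\overline{\mathrm{Aut}}(H)$ as a finite subgroup of $\mathrm{PGL}_2(k)$ which permutes the set $B$ of $2g+2 = 10$ branch points of the double cover $H \to \mathbb{P}^1$. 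The whole classification is thus reduced to understanding which finite subgroups of $\mathrm{PGL}_2(k)$ can stabilize a $10$-point set, together with the structure of the lift back to $\mathrm{Aut}(H)$.

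First I would exclude wild behaviour. Since $p \geq 7 > g+1 = 5$, the Roquette bound \cite{Roq} gives $|\mathrm{Aut}(H)| \leq 84(g-1) = 252$; more importantly, no element of order $p$ can preserve $B$. Indeed a nontrivial unipotent element of $\mathrm{PGL}_2(k)$ fixes a single point and has all other orbits of size $p$, so preserving the $10$ points of $B$ would force $p \mid 10$ or $p \mid 9$, impossible for $p \geq 7$. Hence $\overline{\mathrm{Aut}}(H)$ has order prime to $p$, and by the tame classification of finite subgroups of $\mathrm{PGL}_2(k)$ (Proposition \ref{prop:Faber}, based on \cite{Faber}) it is cyclic, dihedral, or one of $A_4$, $S_4$, $A_5$. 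Here one invokes Proposition \ref{prop:Faber} precisely to confirm that characteristic $p \geq 7$ produces no $p$-irregular subgroups and hence no automorphisms beyond those already available in characteristic zero.

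Next I would carry out the orbit analysis. For each candidate $G \subseteq \mathrm{PGL}_2(k)$, the branch set $B$ must be a union of $G$-orbits, where a generic orbit has size $|G|$ and the short orbits sit at fixed points of elements of $G$. Writing $10$ as a sum of admissible orbit lengths constrains $|G|$ and pins down the orbit distribution: for instance $S_4$ and $A_5$ are ruled out because their orbit lengths (minimum $6$, resp.\ $12$) cannot sum to $10$, whereas $A_4$ survives since $10 = 4 + 6$ is a sum of $A_4$-orbit lengths. Conjugating $G$ into standard position in $\mathrm{PGL}_2(k)$ (e.g.\ a cyclic group to $\mathrm{diag}(\zeta,1)$) and normalizing via Lemma \ref{lem:isom} and Remark \ref{rem:iso}, one then reads off a reduced model $y^2 = f(x)$ for each surviving type, matching the fourth column of Table \ref{table:aut}. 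One must also verify \emph{genericity}: each listed $f$ must have reduced automorphism group equal to, not merely containing, the asserted $G$ for generic values of its free parameters, so that the ten strata are genuinely disjoint.

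Finally I would determine $\mathrm{Aut}(H)$ as a central extension $1 \to \langle \iota \rangle \to \mathrm{Aut}(H) \to \overline{\mathrm{Aut}}(H) \to 1$. For each reduced automorphism, represented by a matrix $P$ through Lemma \ref{lem:isom}, I would lift it to a genuine automorphism by solving for the scalar $\lambda$ making $(P,\lambda)$ preserve $y^2 = f(x)$, and decide whether the lift has the same order as its image or twice that order; the non-split case is exactly what separates, say, $\mathrm{Aut}(H) \cong \mathrm{SL}_2(\mathbb{F}_3)$ from $A_4 \times \mathbb{Z}_2$ when $\overline{\mathrm{Aut}}(H) \cong A_4$, and Lemmas \ref{lem:isom3} and \ref{lem:isomA4} furnish the criterion for an order-$2$ reduced automorphism to lift to an order-$4$ element. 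The main obstacle is this last step together with the genericity verification: one must treat each of the ten types by an explicit, somewhat delicate computation with $(P,\lambda)$-pairs, tracking which extensions by $\langle \iota \rangle$ split and confirming that no spurious automorphisms arise in characteristic $p \geq 7$.
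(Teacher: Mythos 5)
Your proposal is correct in outline and shares the paper's overall skeleton (both follow Shaska's strategy: embed $\overline{\mathrm{Aut}}(H)$ in $\mathrm{PGL}_2(k)$, invoke the Beauville--Faber classification, rule out wild automorphisms for $p \geq 7$ via the translation-orbit argument forcing $p \mid 9$ or $p \mid 10$, then normalize and lift to $\mathrm{Aut}(H)$), but your key exclusion step is genuinely different. Where you rule out $S_4$, $A_5$ (and implicitly $D_3$, $D_9$, and the collapses of $\mathbb{Z}_4,\mathbb{Z}_5,\mathbb{Z}_8,\mathbb{Z}_{10}$ into dihedral types) by a uniform combinatorial analysis of which unions of $G$-orbits on $\mathbb{P}^1$ can have cardinality $10$ --- legitimate in tame characteristic since point stabilizers are cyclic and the special-orbit structure matches characteristic zero --- the paper instead argues equation-by-equation: Lemma \ref{lem:aut2} diagonalizes a single automorphism to constrain coefficients, Lemma \ref{lem:A5} kills $A_5$ by computing a $3\times 3$ determinant $2^2\cdot 3\cdot 5^5(2\cdot 3^2\zeta^3+2\cdot 3^2\zeta^2+29)\neq 0$ for $p\geq 7$, Lemma \ref{lem:A4S4} eliminates $S_4$ and pins down $A=\pm 2i\sqrt{3}$ by coefficient comparison in $(x-i)^{10}f\bigl(\tfrac{x+i}{x-i}\bigr)=\lambda^2 f(x)$, and Lemma \ref{lem:noD3D9} derives a contradiction from the shape $f = ax^{10}+bx^7+cx^4+dx$. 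Your orbit method is more conceptual and handles all groups at once (e.g., $10 = 4+6$ for $A_4$ even produces the tetrahedral quartic $x^4\pm 2i\sqrt{3}x^2+1$ directly), whereas the paper's computational route delivers the explicit $(P,\lambda)$-representations that are reused elsewhere (Lemmas \ref{lem:HabIsom}, \ref{lem:isom3}, \ref{lem:isomA4}) and verifies edge cases like $p=5$ by machine. One place your plan is thinner than the paper: you defer the fifth-column determination of $\mathrm{Aut}(H)$ and the genericity/disjointness checks as ``the main obstacle,'' and you should note that the emptiness of the exact types $\mathbb{Z}_4,\mathbb{Z}_5,\mathbb{Z}_8,\mathbb{Z}_{10}$ is not a generic-parameter issue but is forced identically by the normal forms (e.g., $y^2=x^9+ax^5+x$ always admits $x\mapsto 1/x$, as in the paper's Lemmas \ref{lem:Dg} and \ref{lem:D45810}); your framework detects this, but the proposal as written does not make it explicit.
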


\renewcommand{\arraystretch}{1.4}
\begin{table}[H]
\centering{
\caption{Possible finite groups isomorphic to $\overline{\rm Aut}(H)$ for hyperelliptic curves $H$ of genus $4$ over an algebraically closed field $k$ of characteristic $p \geq 7$, and hyperelliptic equations $y^2 = f(x)$ defining $H$, where $a,b,c,d \in k$ and where $i$ is an element satisfying $i^2 = -1$ in $k$.
We denote by ``dim.'' the dimension of the moduli space of $H$ with ${\rm Aut}(H)$ of given type.}
\label{table:aut}
%\vspace{-5pt}
\scalebox{0.85}{
\begin{tabular}{c||c|c|l|c|c|c} \hline
Type & $\overline{\mathrm{Aut}}(H)$ & $\# \overline{\mathrm{Aut}}(H)$ & $y^2 = f(x)$ birational to $H$ & ${\mathrm{Aut}}(H)$ & $\# {\mathrm{Aut}}(H)$ & dim.\ \\ \hline 
{\bf 1} & $\{ 0 \}$ & 1 & $y^2 = (\mbox{square-free polynomial in $x$ of degree 9 or 10})$   & $\mathbb{Z}_2$ & 2 & 7 \\ \hline
{\bf 2-1} & $\mathbb{Z}_2$ & 2 & $y^2 = x^{10} + a x^8 + b x^6 + c x^4 + d x^2 + 1$   & $V_4$ & 4 & 4 \\ \hline
{\bf 2-2} & $\mathbb{Z}_2$ & 2 & $y^2 = x^{9} + a x^7 + b x^5 + c x^3 + x$ & $\mathbb{Z}_4$ & 4 & 3 \\ \hline
{\bf 3} & $\mathbb{Z}_3$ & 3 & $y^2 = x^{10} + a x^7 + b x^4 + x$  & $\mathbb{Z}_6$ & 6 & \multirow{3}{*}{2} \\ 
\multirow{2}{*}{\bf 4-1} & \multirow{2}{*}{$V_4$} & \multirow{2}{*}{4} & $y^2 = x^{10} + a x^8 + b x^6 + b x^4 + a x^2 + 1$, or  & \multirow{2}{*}{$D_4$} & 8 &  \\ 
 &  &  & $y^2 = x^{9} + a x^7 + b x^5 + a x^3 + x$ &  & 8 &  \\ \hline
{\bf 4-2} & $V_4$ & 4 & $y^2 = x (x^4-1)(x^4+a x^2+1)$ & $Q_8$ & 8 & \multirow{3}{*}{1}\\ 
{\bf 5} & $D_4$ & 8 & $y^2 = x^{9} + a x^5 + x$ & $D_8$ & 16 &  \\ 
{\bf 6} & $D_5$ & 10 & $y^2 = x^{10} + a x^5 + 1$ & $D_{10}$ & 20 &  \\ \hline
{\bf 7} & $A_4$ & 12 & $y^2 = x (x^4-1)(x^4 + 2i \sqrt{3} x^2 + 1)$ & $\mathrm{SL}_2(\mathbb{F}_3)$ & 24 & \multirow{4}{*}{0}\\ 
{\bf 8} & $D_8$ & 16 & $y^2 = x^{9} + x$ & $\mathbb{Z}_{16} \rtimes \mathbb{Z}_2$ & 32 &  \\ 
{\bf 9} & $\mathbb{Z}_9$ & 9 & $y^2 = x^{10} + x$ & $\mathbb{Z}_{18}$  & 18 &  \\
{\bf 10} & $D_{10}$ & 20 & $y^2 = x^{10}+1$ & $ \mathbb{Z}_5 \rtimes D_{4} $ & 40 &   \\ \hline
\end{tabular}
}
}
\end{table}
\renewcommand{\arraystretch}{1}

% \begin{figure}[H]
% \label{fig:aut}
%  \centering
% \caption{Dimensions and containment relationships among the moduli spaces of genus-$4$ hyperelliptic curves with given automorphism groups ${\rm Aut}(H)$.}
% \renewcommand{\arraystretch}{0.8}
% {\small 
% \[
% \xymatrix{
%  & &  \mathbb{Z}_2  \ar@{-}[rd] \ar@{-}[dd] \ar@{-}[llddd]  & & & \mbox{7-dimensional}\\
%  & & & V_4 \ar@{-}[dd] \ar@{-}[ldd] \ar@{-}[rddd]&   & \mbox{4-dimensional}\\
%  & & \mathbb{Z}_4 \ar@{-}[d] \ar@{-}[ldd]  & & & \mbox{3-dimensional}\\
%  \mathbb{Z}_6 \ar@{-}[dd] \ar@{-}[rdd]  & & \mathbb{Z}_2 \times \mathbb{Z}_4 \ar@{-}[d] & \mathbb{Z}_2 \times V_4 \ar@{-}[rdd] \ar@{-}[ld] &  & \mbox{2-dimensional}\\
%  & Q_8 \ar@{-}[d]\ar@{-}[rd]  & \mathbb{Z}_2 \times D_4 \ar@{-}[d] &   & D_{10} \ar@{-}[d] & \mbox{1-dimensional}\\
%  \mathbb{Z}_{18} & \mathrm{SL}_2(\mathbb{F}_3) & \mathbb{Z}_{16} \rtimes \mathbb{Z}_2 & & \mathbb{Z}_5 \rtimes D_4 & \mbox{0-dimensional}
% }
% \] 
% }
% \renewcommand{\arraystretch}{1}
% \end{figure}

\renewcommand{\arraystretch}{1}

\begin{figure}[H]
\label{fig:aut}
 \centering
\caption{Dimensions and containment relationships among the moduli spaces of genus-$4$ hyperelliptic curves with given automorphism groups ${\rm Aut}(H)$.}
\renewcommand{\arraystretch}{0.8}
{\small 
\[
\xymatrix{
 & &  \mathbb{Z}_2  \ar@{-}[rd] \ar@{-}[dd] \ar@{-}[llddd]  & & & \mbox{7-dimensional}\\
 & & & V_4 \ar@{-}[dd]  \ar@{-}[rddd]&   & \mbox{4-dimensional}\\
 & & \mathbb{Z}_4 \ar@{-}[dd] \ar@{-}[rd]   & & & \mbox{3-dimensional}\\
 \mathbb{Z}_6 \ar@{-}[dd] \ar@{-}[rdd]  & &  \ar@{-}[d] & D_4 \ar@{-}[rdd] \ar@{-}[d] &  & \mbox{2-dimensional}\\
 & & Q_8 \ar@{-}[ld]\ar@{-}[rd]  &  D_8 \ar@{-}[d]    & D_{10} \ar@{-}[d] & \mbox{1-dimensional}\\
 \mathbb{Z}_{18} & \mathrm{SL}_2(\mathbb{F}_3) & & \mathbb{Z}_{16} \rtimes \mathbb{Z}_2  & \mathbb{Z}_5 \rtimes D_4 & \mbox{0-dimensional}
}
\] 
}
\renewcommand{\arraystretch}{1}
\end{figure}
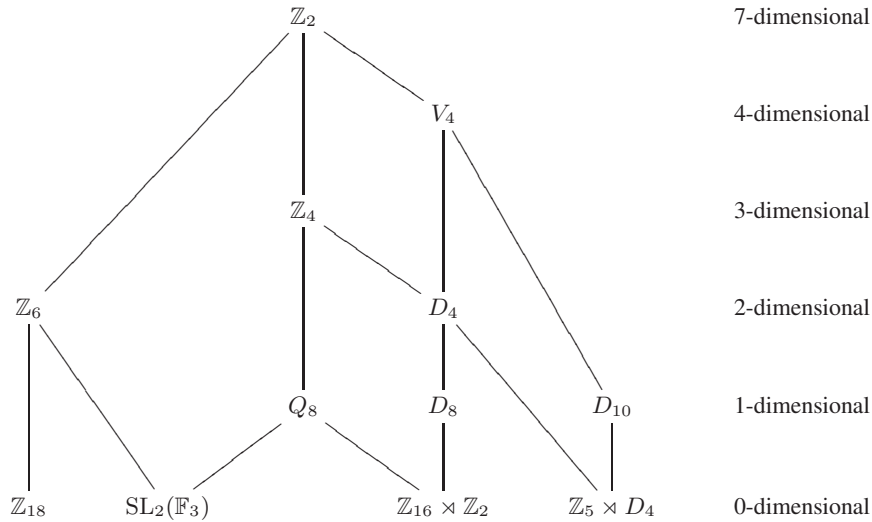

%=================================================
\subsection{Reduced automorphism groups of hyperelliptic curves}
%=================================================

Let $H : y^2 = f(x)$ be a hyperelliptic curve of genus $g$ over an algebraically closed field $k$ of characteristic $p \geq 0$ with $p \neq 0$.
As a particular case of Lemma \ref{lem:isom}, 
%each automorphism of a hyperelliptic curve can be represented by an element of $\mathrm{GL}_2(k) \times k^{\times}$.
%(see \cite[Section 4.1]{KH18b} for details):
%More precisely, 
we have a group isomorphism
\[
G_f := \{ (P, \lambda ) \in \mathrm{GL}_2(k) \times k^{\times} : P.f = \lambda^2 f(x) \} / \{ (\mu I_2, \mu^{g+1}) : \mu \in I_2 \} \cong \mathrm{Aut}(H)
\]
sending $[P, \lambda ]$ to $(x,y) \mapsto \left(  \frac{\alpha x + \beta}{\gamma x + \delta}, \frac{\lambda y}{(\gamma x + \delta)^{g+1}} \right)$ for $P = \left( \begin{smallmatrix}
\alpha & \beta \\
\gamma & \delta
\end{smallmatrix} \right)$, where $P.f := (\gamma x + \delta)^{2g+2} f \left(  \frac{\alpha x + \beta}{\gamma x + \delta} \right)$ and where $[P, \lambda]$ denotes the equivalence class of $(P, \lambda)$ in the left hand side of the isomorphism.
In the following, we identify ${\rm Aut}(H)$ with $G_f$ via the above isomorphism.
We also note that the reduced automorpism group $\overline{\mathrm{Aut}}(H) \cong G_f / \langle [I_2, -1] \rangle$ is isomorphic to a subgroup of the projective linear group $\mathrm{PGL}_2(k)$, where an injective homomorphism $\overline{\mathrm{Aut}}(H) \cong G_f / \langle [I_2, -1] \rangle \to \mathrm{PGL}_2(k)$ is induced from $\mathrm{Aut}(H) \to \mathrm{PGL}_2(k) \ ; \ [P,\lambda] \to P$.
More explicitly, we have
\begin{equation}\label{eq:RedAut}
    \overline{\mathrm{Aut}}(H) \cong \overline{G}_{f} := \{ P \in \mathrm{PGL}_2(k): P.f(x) = \lambda^2 f(x) \mbox{ for some } \lambda \in k^{\times} \} .
\end{equation}

As it is also noted in \cite[Section 2]{GS05} (without proof), any automorphism $\sigma$ of a hyperelliptic curve is represented by $\mathrm{diag}(\mu,1)\in \mathrm{GL}_2(k)$ for a primitive $\ell$-th root $\mu$ of unity with $\ell = \mathrm{ord}(\sigma)$ (the order of $\sigma$ as an element of $\overline{\rm Aut}(H)$), if $\ell$ is coprime to the characteristic of $k$.
An explicit proof of this fact is given in \cite{MK22}, and we here state assertions only:

\begin{prop}[{\cite[Proposition 2.2.2]{MK22}}]\label{prop:aut}
Let $H$ be a hyperelliptic curve of genus $g$ over an algebraically closed field $k$, and $\ell$ a positive integer coprime to $\mathrm{char}(k)$.
Assume that $\sigma$ has order $\ell$ in the reduced automorphism group of $H$.
Then there exists a hyperelliptic curve $H':y^2 = f(x)$ over $k$ and an isomorphism $\rho : H' \to H$ such that the automorphism $\rho^{-1}  \sigma \rho$ of $H'$ is represented by $(\mathrm{diag}(\mu,1),\mu')\in \mathrm{GL}_2(k) \times k^{\times}$, where $\mu$ is a primitive $\ell$-th root of $1$, and where $\mu'$ is an element satisfying $(\mu')^{\ell} = 1$ or $-1$.
We also have $\mu' = \pm \mu^{g+1}$ if $\mathrm{deg}(f) = 2g+2$, and $\mu' =\pm \sqrt{\mu^{2g+1}}$ if $\mathrm{deg}(f) = 2g+1$. 
Moreover, $\sigma$ is the hyperelliptic involution (i.e., $\ell=1$) if and only if $\mu=1$.
\end{prop}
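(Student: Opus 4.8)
The plan is to transport the problem into $\mathrm{PGL}_2(k)$ via the identification \eqref{eq:RedAut} and to exploit that an element of finite order coprime to $\mathrm{char}(k)$ is semisimple. Writing $\sigma$ as an element of $\mathrm{Aut}(H) \cong G_f$ represented by $(P_0, \lambda_0)$, its image in $\overline{\mathrm{Aut}}(H) \hookrightarrow \mathrm{PGL}_2(k)$ is the class $P := [P_0]$ of order $\ell$. I would first choose a lift $\tilde{P} \in \mathrm{GL}_2(k)$ of $P$; since $P^\ell = 1$ in $\mathrm{PGL}_2(k)$ we have $\tilde{P}^\ell = c\,I_2$ for some $c \in k^\times$, and as $k$ is algebraically closed I may rescale $\tilde{P}$ by a scalar $t$ with $t^\ell = c^{-1}$, after which $\tilde{P}^\ell = I_2$ and in fact $\tilde{P}$ has order exactly $\ell$ (a smaller power being the identity would force $P$ to have order $< \ell$).

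Here the hypothesis $\gcd(\ell, \mathrm{char}(k)) = 1$ enters decisively: the annihilating polynomial $X^\ell - 1$ is separable over $k$, so $\tilde{P}$ is diagonalizable with eigenvalues $\mu_1, \mu_2$ that are $\ell$-th roots of unity. Because $P$ has order exactly $\ell$ in $\mathrm{PGL}_2(k)$ it is non-scalar for $\ell \geq 2$, so $\mu_1 \neq \mu_2$ and the quotient $\mu := \mu_1 \mu_2^{-1}$ is a primitive $\ell$-th root of unity (for $\ell = 1$ the element $P$ is a scalar class and $\mu = 1$). Choosing $Q \in \mathrm{GL}_2(k)$ that diagonalizes $\tilde{P}$, we obtain $Q^{-1} P Q = \mathrm{diag}(\mu, 1)$ in $\mathrm{PGL}_2(k)$.

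Next I would translate this back to curves through Lemma \ref{lem:isom}: the matrix $Q$ defines an isomorphism $\rho : H' \to H$, where $H' : y^2 = f(x)$ is obtained by acting with $Q$ on a defining equation of $H$, and then $\rho^{-1}\sigma\rho$ is represented by $(\mathrm{diag}(\mu,1), \mu')$ for a suitable $\mu' \in k^\times$ (using the equivalence $(P,\lambda) \sim (\mu P, \mu^{g+1}\lambda)$ to normalize the matrix to be exactly $\mathrm{diag}(\mu,1)$). To pin down $\mu'$ I would use that this is an automorphism of $H'$, hence $\mathrm{diag}(\mu,1).f = f(\mu x) = (\mu')^2 f(x)$; comparing the $x^i$-coefficients forces $\mu^i = (\mu')^2$ for every $i$ with nonzero coefficient. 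Reading off the leading term gives $(\mu')^2 = \mu^{2g+2}$ when $\deg f = 2g+2$ and $(\mu')^2 = \mu^{2g+1}$ when $\deg f = 2g+1$, that is $\mu' = \pm \mu^{g+1}$ and $\mu' = \pm\sqrt{\mu^{2g+1}}$ respectively. Raising to the $\ell$-th power and using $\mu^\ell = 1$ then yields $((\mu')^\ell)^2 = 1$, so $(\mu')^\ell = \pm 1$ in both cases. The final equivalence is immediate: $\sigma$ is the hyperelliptic involution precisely when its class is trivial in $\overline{\mathrm{Aut}}(H)$, i.e. $\ell = 1$, which by definition of a primitive $\ell$-th root of unity means $\mu = 1$.

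The main obstacle is the bookkeeping in the passage between $\mathrm{GL}_2$ and $\mathrm{PGL}_2$ and the resulting scalar ambiguity: one must check that acting by $Q$ still produces a genuine hyperelliptic equation $y^2 = f(x)$ with $f$ separable, and carefully track that $\rho$ is determined only up to the equivalence in Lemma \ref{lem:isom}, which is exactly what leaves $\mu'$ ambiguous by a sign (equivalently, by composition with $\iota$). A related point needing care is that applying $Q$ can interchange the degrees $2g+1$ and $2g+2$ according to whether $Q$ fixes the point at infinity, and this is precisely why the formula for $\mu'$ splits into the two stated cases.
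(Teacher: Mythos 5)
Your argument is correct: lifting the order-$\ell$ class in $\mathrm{PGL}_2(k)$ to an element of $\mathrm{GL}_2(k)$ with $\tilde{P}^\ell = I_2$, using $\gcd(\ell,\mathrm{char}(k))=1$ so that $X^\ell-1$ is separable and $\tilde{P}$ is diagonalizable, conjugating to $\mathrm{diag}(\mu,1)$ with $\mu$ primitive of order $\ell$, normalizing via the equivalence $(P,\lambda)\sim(\nu P,\nu^{g+1}\lambda)$ of Lemma \ref{lem:isom}, and then reading $\mu'$ off the leading coefficient of $f(\mu x)=(\mu')^2 f(x)$ is exactly the standard route, and your flagged caveats (separability of $Q.f$ and the possible interchange of degrees $2g+1$ and $2g+2$) are the right ones and are easily discharged. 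Note that this paper does not prove the proposition itself but defers to \cite[Proposition 2.2.2]{MK22}, whose proof proceeds by the same diagonalization/normalization of the semisimple element, so your proposal matches the cited proof in essence.
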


\begin{lem}\label{lem:aut2}
Let $H$ and $\ell$ be as in Proposition \ref{prop:aut}.
%positive integer which is coprime to the characteristic of $k$.
If $H$ has an automorphism $\sigma$ of order $\ell \geq 2$ in the reduced automorphism group, then $\ell$ must divide $2g$, $2g+1$ or $2g+2$.
Moreover, $H$ is isomorphic to $y^2= f(x)$, where $f(x)$ is either of the following forms:
\begin{enumerate}
    \item[(1)] If $\ell$ divides $2g+2$, then
    \[
    f(x) =  x^{n \ell} + a_{(n-1)\ell} x^{(n-1)\ell}  + \cdots + a_{\ell} x^{\ell} + 1,
    \]
    where $n \ell = 2g+2$.
    \item[(2)] If $\ell$ divides $2g+1$, then
    \begin{align*}
        f(x) &= x ( x^{n \ell} + a_{(n-1)\ell+1} x^{(n-1)\ell}  + \cdots + a_{\ell +1} x^{\ell} + 1),
    \end{align*}
    where $n \ell = 2g+1$.
    \item[(3)] If $\ell$ divides $2g$, then
    \begin{align*}
        f(x) &= x ( x^{n \ell} + a_{(n-1)\ell+1} x^{(n-1)\ell}  + \cdots + a_{\ell +1} x^{\ell} + 1),
    \end{align*}
    where $n \ell = 2g$.
\end{enumerate}
\end{lem}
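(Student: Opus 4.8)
The plan is to reduce to the normal form supplied by Proposition \ref{prop:aut} and then read off everything from the action of $\sigma$ on the branch locus of the double cover $H \to \mathbb{P}^1$. First I would invoke Proposition \ref{prop:aut}: after replacing $H$ by an isomorphic curve $H' : y^2 = f(x)$, the order-$\ell$ automorphism $\sigma$ is represented by $(\mathrm{diag}(\mu,1), \mu')$ with $\mu$ a primitive $\ell$-th root of unity, so the induced element $\bar\sigma \in \mathrm{PGL}_2(k)$ is the map $x \mapsto \mu x$. Since $\ell$ is coprime to $p$ and $\bar\sigma$ has exactly the two fixed points $0$ and $\infty$ on $\mathbb{P}^1$, every $\bar\sigma$-orbit away from $\{0,\infty\}$ has cardinality exactly $\ell$.

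For the divisibility claim I would use that $\bar\sigma$ permutes the set $B$ of the $2g+2$ branch points of $H \to \mathbb{P}^1$ (the roots of $f$, together with $\infty$ when $\deg f = 2g+1$). Writing $\epsilon := |B \cap \{0,\infty\}| \in \{0,1,2\}$, the complement $B \setminus \{0,\infty\}$ is a disjoint union of $\ell$-orbits, so $\ell \mid (2g+2-\epsilon)$. Hence $\epsilon = 0$ forces $\ell \mid 2g+2$, $\epsilon = 1$ forces $\ell \mid 2g+1$, and $\epsilon = 2$ forces $\ell \mid 2g$, which is exactly the first assertion.

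To obtain the explicit shapes of $f$, I would translate membership in $B$ into properties of $f$: one has $0 \in B \iff f(0)=0 \iff x \mid f$, and $\infty \in B \iff \deg f = 2g+1$. The relation $P.f = \lambda^2 f$ from \eqref{eq:RedAut} (with $\lambda = \mu'$) reads $f(\mu x) = (\mu')^2 f(x)$, and comparing coefficients shows a monomial $a_i x^i$ can be nonzero only when $\mu^i = (\mu')^2$. As the leading term survives, $(\mu')^2 = \mu^{\deg f}$, so the surviving exponents are precisely those $i$ with $i \equiv \deg f \pmod{\ell}$. When $\epsilon = 0$ we have $\deg f = 2g+2$ and $f(0) \neq 0$; normalizing the top and constant coefficients to $1$ by Remark \ref{rem:iso} (a scaling $x \mapsto cx$, which commutes with $\bar\sigma$ in $\mathrm{PGL}_2(k)$ and so keeps $\sigma$ diagonal) leaves exponents $\equiv 0 \pmod{\ell}$, giving form (1). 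When $\epsilon = 2$ we have $\deg f = 2g+1$ and $x \mid f$, and the surviving exponents are $\equiv 1 \pmod{\ell}$, giving form (3) after the analogous normalization.

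The main subtlety is the case $\epsilon = 1$, where a priori either $0 \in B,\ \infty \notin B$ (so $\deg f = 2g+2$, $x \mid f$) or $\infty \in B,\ 0 \notin B$ (so $\deg f = 2g+1$, $f(0) \neq 0$). I would reduce the second sub-case to the first by the involution $x \mapsto 1/x$, which swaps $0$ and $\infty$ and conjugates the diagonal $\bar\sigma$ to $x \mapsto \mu^{-1} x$, again a generator of $\langle \bar\sigma \rangle$ of order $\ell$; concretely it replaces $f$ by $x^{2g+2} f(1/x)$, which is divisible by $x$ and of degree $2g+2$. Thus both sub-cases reduce to $\deg f = 2g+2$, $x \mid f$, with surviving exponents $\equiv 2g+2 \equiv 1 \pmod{\ell}$, and normalizing the leading and linear coefficients to $1$ yields form (2). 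The only point requiring genuine care is checking that each scaling or inversion keeps $\sigma$ represented by a (anti-)diagonal matrix, which holds because diagonal and anti-diagonal elements of $\mathrm{PGL}_2(k)$ normalize the diagonal torus; everything else is routine coefficient bookkeeping.
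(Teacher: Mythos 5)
Your proof is correct, and its second half (the normal form from Proposition \ref{prop:aut}, the coefficient filtering from $f(\mu x) = (\mu')^2 f(x)$ with surviving exponents $i \equiv \deg f \pmod{\ell}$, the normalization via Remark \ref{rem:iso}, and the inversion $x \mapsto 1/x$ to fold the sub-case $\deg f = 2g+1$, $f(0) \neq 0$ into case (2)) coincides with the paper's argument, including its case (ii)$'$ reduction. Where you genuinely diverge is the divisibility claim: you count free orbits of $\langle \bar\sigma \rangle$ on the branch locus $B$, noting that $\bar\sigma \colon x \mapsto \mu x$ fixes only $0$ and $\infty$, so $\ell \mid 2g+2-\epsilon$ with $\epsilon = |B \cap \{0,\infty\}| \in \{0,1,2\}$, whereas the paper argues purely algebraically: if neither $\mu^{-d} = 1$ nor $\mu^{-(d-1)} = 1$ then the coefficient relations force $a_0 = a_1 = 0$, so $x^2 \mid f$, contradicting separability, whence $\ell \mid d$ or $\ell \mid d-1$ for $d \in \{2g+1, 2g+2\}$. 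The two derivations are equivalent in content (your trichotomy $\epsilon = 0, 1, 2$ matches the paper's cases (i), (ii)/(ii)$'$, (iii) exactly), but yours makes the geometric source of the three divisors $2g+2$, $2g+1$, $2g$ transparent and explains a priori why exactly three cases occur, while the paper's version is more self-contained, staying entirely at the level of coefficient bookkeeping and needing no appeal to the action on Weierstrass points. The only implicit step in your version worth making explicit is that $\bar\sigma$ preserves $B$, which holds because the hyperelliptic involution is central so automorphisms permute Weierstrass points; with that noted, your argument is complete.
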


\begin{proof}
We may suppose from Proposition \ref{prop:aut} that $\sigma$ is represented by $(\mathrm{diag}(\mu,1), \mu') $, where $\mu$ and $\mu'$ are as in Proposition \ref{prop:aut}.
Let $y^2 = f(x)$ be an equation of $H$, where $f(x) = \sum_{i=0}^{d} a_i x^i$ with $d = 2g+1$ or $2g+2$, so that $(\mu')^{-2} = \mu^{-d}$.
One has that 
\begin{align*}
    (\mu')^{-2} f(\mu x) & = \mu^{d(\ell-1)} \sum_{i=0}^{d} a_{d-i} \mu^{d-i} x^{d-i}  = \sum_{i=0}^{d} a_{d-i} \mu^{d \ell -i} x^{d-i} = \sum_{i=0}^{d} a_{d-i} \mu^{-i} x^{d-i} \\
    & = a_{d} x^{d} + a_{d-1} \mu^{-1} x^{d-1} + \cdots + a_1 \mu^{-(d-1)} x + a_0 \mu^{-d},
\end{align*}
and hence it follows from $f(x) = (\mu')^{-2} f(\mu x)$ that $a_{d-i} = \mu^{-i} a_{d-i}$ for each $0 \leq i \leq d$.

If $\mu^{-(d-1)} \neq 1$ and $\mu^{-d} \neq 1$, then $a_1 = a_0 = 0$, namely $f(x)$ is divided by $x^2$, a contradiction.
Thus $\mu^{-(d-1)} = 1$ or $\mu^{-d} = 1$, and hence $\ell$ divides $d-1$ or $d$.
In any case, we obtain
\[
f(x) = a_{d} x^{d} + a_{d-\ell} x^{d -\ell} + a_{d-2\ell} x^{d-2\ell} + \cdots + a_{d-n\ell}x^{d-n\ell}
\]
where $n \ell = d-1$ or $d$.
More precisely, we have the following four (in fact three) cases:
\begin{itemize}
    \item[(i)] If $d= 2g+2$ and $\mu^{-d} = 1$, then $\ell$ divides $2g+2$, and
    \[
    f(x) = a_{n \ell} x^{n \ell} + a_{(n-1)\ell} x^{(n-1)\ell}  + \cdots + a_{\ell} x^{\ell} + a_{0}.
    \]
    By Remark \ref{rem:iso}, this case is nothing but the case (1).
    \item[(ii)] If $d= 2g+2$ and $\mu^{-(d-1)} = 1$, then $\ell$ divides $2g+1$, and
    \begin{align*}
        f(x) &= a_{n \ell +1} x^{n \ell + 1} + a_{(n-1)\ell + 1} x^{(n-1)\ell + 1}  + \cdots + a_{\ell + 1} x^{\ell + 1} + a_{1} x\\
        &= x (a_{n \ell+1} x^{n \ell} + a_{(n-1)\ell+1} x^{(n-1)\ell}  + \cdots + a_{\ell +1} x^{\ell} + a_{1})
    \end{align*}
    with $n \ell = 2g+1$.
    This is the case (2), by Remark \ref{rem:iso}.
    \item[(ii)'] If $d= 2g+1$ and $\mu^{-d} = 1$, then $\ell$ divides $2g+1$, and
    \[
    f(x) = a_{n \ell} x^{n \ell} + a_{(n-1)\ell} x^{(n-1)\ell}  + \cdots + a_{\ell} x^{\ell} + a_{0} 
    \]
    with $n \ell = 2g+1$.
    In this case, the hyperelliptic equation $y^2 = f(x)$ is transformed by $(x,y) \mapsto (1/x,y/x^{g+1})$ into
    \begin{align*}
    y^2 & = a_{n \ell} x^{(n \ell + 1) - n \ell} + \cdots + a_{i \ell} x^{(n \ell + 1) - i\ell} + \cdots  + a_{0} x^{n \ell + 1}\\
    &= a_0 x^{n \ell + 1}  + \cdots + a_{i \ell} x^{(n-i) \ell + 1} + \cdots  + a_{n \ell} x, 
    \end{align*}
    which is included in the case (ii).
    \item[(iii)] If $d= 2g+1$ and $\mu^{-(d-1)} = 1$, then $\ell$ divides $2g$, and
    \begin{align*}
        f(x) &= a_{n \ell +1} x^{n \ell + 1} + a_{(n-1)\ell + 1} x^{(n-1)\ell + 1}  + \cdots + a_{\ell + 1} x^{\ell + 1} + a_{1} x
    \end{align*}
    with $n \ell = 2g$.
    This is the case (3), by Remark \ref{rem:iso}.
\end{itemize}

\end{proof}

\begin{ex}\label{ex:aut}
For $p \geq 11$, hyperelliptic curves $H$ of genus $3$ over $k$ with extra automorphisms are (roughly) classified as follows, up to $k$-isomorphisms, in terms of reduced automorphism groups:
\begin{enumerate}
    %\item $\overline{\mathrm{Aut}}(H) = \{ 0 \}$:
    %y$^2 = x^{10} + a x^9 + b x^8 + c x^4 + d x^2 + 1$ for $a, b, c, d \in k$,
    \item ($\overline{\mathrm{Aut}}(H) \supset \mathbb{Z}_2$):
    $y^2 = x^{8} + a x^6 + b x^4 + c x^2 + 1$ or $y^2 = x^7 + a x^5 + b x^3 + x$ for $a, b, c \in k$.
     \item ($\overline{\mathrm{Aut}}(H) \supset \mathbb{Z}_3$):
    $y^2 = x^7 + a x^4 + x$ for $a \in k$.
    \item ($\overline{\mathrm{Aut}}(H) \supset \mathbb{Z}_4$): $y^2 = x^{8} + a x^4 +1$ for $a \in k$.
    \item ($\overline{\mathrm{Aut}}(H) \supset \mathbb{Z}_6$): $y^2 = x^{7} + x$.
    \item ($\overline{\mathrm{Aut}}(H) \supset \mathbb{Z}_7$): $y^2 = x^{8} + x$ (or $y^2 = x^7 + 1$).
    \item ($\overline{\mathrm{Aut}}(H) \supset \mathbb{Z}_{8}$): $y^2 = x^{8} + 1$.
\end{enumerate}
See e.g., \cite[Table 1]{GSS05} and \cite[Table 2]{LRS} for more precise classifications in the genus-$3$ hyperelliptic case.
\end{ex}

Recall from the begining of this section that $\overline{\mathrm{Aut}}(H)$ is embedded into $\mathrm{PGL}_2(k)$, and thus we obtain the classification of $\overline{\mathrm{Aut}}(H)$ as abstract finite groups in Theorem \ref{thm:RedAut} below, by applying the following classification (Theorem \ref{thm:Faber} and Proposition \ref{prop:Faber}) of subgroups of $\mathrm{PGL}_2(k)$ in characteristic $p$ (cf.\ the classification in Theorem \ref{thm:Faber} is well-known in characteristic zero):

\begin{thm}[\cite{Bea} and {\cite[Theorem C]{Faber}}]\label{thm:Faber}
Let $k$ be a separably closed field of characteristic $p$, and $G$ a finite subgroup of $\mathrm{PGL}_2(k)$ such that $p$ does not divide $\# G$.
%Any subgroup of $\mathrm{PGL}_2(k)$ of order comprime to $p$ is either of $\mathbb{Z}_n$, $D_n$, $A_4$, $S_4$ or $A_5$.
Then, $G$ is isomorphic up to conjugation to one of the following subgroups:
\begin{enumerate}
    \item[(1)] $G = \langle \begin{pmatrix}
    \zeta & 0 \\
    0 & 1
    \end{pmatrix}\rangle$ for some primitive root $\zeta$ of unity in $k$; here $G$ is cyclic.
    \item[(2)] $G = \langle \begin{pmatrix}
    \zeta & 0 \\
    0 & 1
    \end{pmatrix}\rangle \rtimes \langle \begin{pmatrix}
    0 & 1 \\
    1 & 0
    \end{pmatrix}\rangle$ for some primitive root $\zeta$ of unity in $k$; here $G$ is dihedral.
    \item[(3)] $G = N \rtimes C \cong A_4$, where $N = \left\{ \begin{pmatrix}
    \pm 1 & 0 \\
    0 & 1
    \end{pmatrix}, \begin{pmatrix}
    0 & \pm 1 \\
    1 & 0
    \end{pmatrix}\right\}$
    and $C = \left\{ I_2, \begin{pmatrix}
    1 & i \\
    1 & -i
    \end{pmatrix}, \begin{pmatrix}
    1 & 1 \\
    -i & i
    \end{pmatrix}\right\}$.
    \item[(4)] $G = \langle T, \begin{pmatrix}
    i & 0\\
    0 & 1
    \end{pmatrix} \rangle \cong S_4$, where $T = N \rtimes C \cong A_4$ is the group in the case 3.
    \item[(5)] $G = \langle s, t \rangle$, where $s = \begin{pmatrix}
    \lambda & 0 \\
    0 & 1
    \end{pmatrix}$,
    $t = \begin{pmatrix}
    1 & 1-\lambda-\lambda^{-1} \\
    1 & -1
    \end{pmatrix}$ and $\lambda$ is any primitive fifth root of unity in $k$.
    These generators satisfy $s^5 = t^2 = (st)^3 = I_2$.
\end{enumerate}
\end{thm}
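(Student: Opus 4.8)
The plan is to analyse the action of $G$ on $\mathbb{P}^1(k)$ by M\"{o}bius transformations and run the classical Klein counting argument, the hypothesis $p \nmid \#G$ being precisely what forces the characteristic-$p$ situation to behave like the tame/classical one. First I would record the structural facts coming from tameness. Since $k$ is separably closed and every $g \in G$ has order prime to $p$, each non-identity $g$ is semisimple with two distinct eigenvalues: a repeated eigenvalue would make $g$ either a scalar (hence trivial in $\mathrm{PGL}_2$) or unipotent of order $p$, both excluded. The relevant eigenvalues are roots of unity of order prime to $p$, hence separable and so lying in $k$, whence $g$ has exactly two $k$-rational fixed points on $\mathbb{P}^1$. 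Likewise, the stabilizer $G_x$ of a point $x$ lies in the Borel $\mathbb{G}_a \rtimes \mathbb{G}_m$ fixing $x$; as $\# G_x$ is prime to $p$ it meets the additive part $\mathbb{G}_a$ trivially, so $G_x$ embeds in $\mathbb{G}_m$ and is cyclic.

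Next I would count incidences. Write $N = \#G$ and let $S \subset \mathbb{P}^1$ be the set of points fixed by some non-identity element; since every such element has exactly two fixed points, counting pairs $(g,x)$ with $g \neq 1$ and $gx = x$ in two ways gives $\sum_{x \in S}(\#G_x - 1) = 2(N-1)$. Grouping the points of $S$ into $G$-orbits with stabilizer orders $n_1, \dots, n_r$ (all $\geq 2$), an orbit of a point with stabilizer order $n_i$ has size $N/n_i$ and contributes $N(1 - 1/n_i)$, so after dividing by $N$ one obtains the classical relation $\sum_{i=1}^r (1 - 1/n_i) = 2 - 2/N$. Each summand lies in $[1/2,1)$ and the right-hand side is $<2$, so $r \leq 3$, and a short case analysis pins down the abstract type: $r \leq 1$ cannot occur for nontrivial $G$; $r = 2$ forces $n_1 = n_2 = N$, so $G$ fixes two points and is cyclic; and $r = 3$ forces $(n_1,n_2,n_3)$ to be $(2,2,N/2)$, $(2,3,3)$, $(2,3,4)$ or $(2,3,5)$, yielding the dihedral group, $A_4$, $S_4$ and $A_5$ (the last with presentation $s^5 = t^2 = (st)^3 = 1$) respectively.

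It remains to produce the explicit generators listed and to check their realizability in characteristic $p$. In the cyclic case I would conjugate the two common fixed points to $0$ and $\infty$, so that $G$ consists of diagonal classes generated by $\mathrm{diag}(\zeta,1)$; the dihedral case adjoins the single extra class interchanging $0$ and $\infty$, which can be normalized to $\left(\begin{smallmatrix} 0 & 1 \\ 1 & 0 \end{smallmatrix}\right)$. For $A_4$, $S_4$ and $A_5$ I would invoke rigidity: each of these groups has, up to equivalence, a unique irreducible two-dimensional projective representation, so any subgroup of the given isomorphism type is conjugate to a standard one; writing down the stated matrices for the normal Klein four-group $N$, the order-$3$ element $C$, and the order-$4$ and order-$5$ generators then exhibits representatives, using that $i$ with $i^2 = -1$ (and, for $A_5$, a primitive fifth root $\lambda$ together with the resulting $\sqrt{5}$) exist in the separably closed field $k$.

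I expect the main obstacle to be this last step: establishing the rigidity (uniqueness up to conjugation) of the three exceptional subgroups and verifying that the standard realizations persist in characteristic $p$ whenever $p \nmid \#G$. In characteristic zero this is Klein's classical study of invariant point configurations, but in positive characteristic one must rule out accidental degenerations --- for instance, that the relevant eigenvalue and cross-ratio conditions still admit separable solutions in $k$ and that the groups do not collapse for the small primes permitted by $p \nmid \#G$ --- which is exactly the refinement supplied by Faber~\cite{Faber} over the classical statement of Beauville~\cite{Bea}.
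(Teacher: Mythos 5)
The paper contains no internal proof of Theorem \ref{thm:Faber}: it is imported verbatim from Beauville \cite{Bea} and Faber \cite[Theorem C]{Faber}, so your attempt can only be measured against those sources. Your sketch is essentially the classical Klein argument that underlies them, and it is sound in outline: the tameness hypothesis $p \nmid \#G$ makes every non-identity element semisimple with two distinct $k$-rational fixed points and every point stabilizer cyclic (your Borel-subgroup argument is the right one), the incidence count $\sum_{i=1}^r (1-1/n_i) = 2 - 2/N$ then limits the branch data to $(n,n)$, $(2,2,n)$, $(2,3,3)$, $(2,3,4)$, $(2,3,5)$, and normalizing fixed points to $0,\infty$ produces the diagonal and anti-diagonal representatives in cases (1)--(2). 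Note that Beauville's actual theorem treats arbitrary fields $K$, deducing the general case from the separably closed one by Galois cohomology, while Faber's main contribution is the $p$-irregular case $p \mid \#G$ (cf.\ Proposition \ref{prop:Faber}); the statement at hand is exactly the classical tame case, so your route is the intended one rather than a genuinely different proof.

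The one step you justify incorrectly is the rigidity of the exceptional subgroups. Your claim that each of $A_4$, $S_4$, $A_5$ has ``up to equivalence, a unique irreducible two-dimensional projective representation'' is false for $A_5$: it has two inequivalent such representations, interchanged by its outer automorphism. What you actually need --- and what is true --- is conjugacy of the \emph{images}: the two embeddings of $A_5$ have conjugate image subgroups in $\mathrm{PGL}_2(k)$, since the outer automorphism is realized by conjugation on the icosahedral subgroup. A cleaner and characteristic-robust route, closer to what the cited sources do, is elementary: any two Klein four-subgroups of $\mathrm{PGL}_2(k)$ are conjugate (simultaneously normalize the three commuting involutions), and since $A_4$ and $S_4$ each contain a normal $V_4$, they are pinned down inside the normalizer of the standard $V_4$, which yields the matrices $N$ and $C$ of case (3) and the extra generator $\mathrm{diag}(i,1)$ of case (4); for $A_5$, conjugate an element of order $5$ to $s = \mathrm{diag}(\lambda,1)$ and solve the relations $t^2 = (st)^3 = 1$ for the involution $t$, which forces the stated matrix up to the residual diagonal conjugation and the choice of primitive root. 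This replaces the representation-theoretic rigidity appeal entirely and avoids the degeneration worries you raise at the end, since the only inputs are the existence in $k$ of $i$ and $\lambda$ (separable over the prime field because $p \nmid \#G$), which you already verified.
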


\begin{prop}[{\cite[Proposition 4.1]{Faber}}]\label{prop:Faber}
Let $k$ be an algebraically closed field of characteristic $p>0$.
If $G$ is a cyclic group of $\mathrm{PGL}_2(k)$ with $\# G = p$, then $G$ is conjugate to $\begin{pmatrix}
1 & \mathbb{F}_{p}\\
0 & 1
\end{pmatrix}$.
\end{prop}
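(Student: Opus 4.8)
The plan is to reduce everything to the Jordan normal form of a lift of a generator. Let $g$ be a generator of $G$, so that $g$ has order exactly $p$ in $\mathrm{PGL}_2(k)$, and choose a lift $\tilde{g} \in \mathrm{GL}_2(k)$. First I would use that $k$ is algebraically closed: then $\tilde{g}$ admits a Jordan form, so it is conjugate either to a diagonal matrix $\mathrm{diag}(\lambda,\mu)$ or to a single nontrivial Jordan block $\left(\begin{smallmatrix} \lambda & 1 \\ 0 & \lambda \end{smallmatrix}\right)$ with $\lambda \neq 0$.

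The key observation, which I expect to be the main (though short) conceptual point, is that the diagonalizable case cannot yield an element of order $p$. Indeed, the image of $\mathrm{diag}(\lambda,\mu)$ in $\mathrm{PGL}_2(k)$ equals that of $\mathrm{diag}(\lambda\mu^{-1},1)$, whose order is the multiplicative order of $\lambda\mu^{-1}$ in $k^{\times}$. But in characteristic $p$ one has $x^p - 1 = (x-1)^p$, so $k^{\times}$ contains no element of order $p$; hence the order of the image is coprime to $p$ (or the image is trivial), contradicting $\mathrm{ord}(g) = p$. Thus the absence of nontrivial $p$-torsion in $k^{\times}$ forces the unipotent case, and this is the heart of the argument.

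Therefore $\tilde{g}$ is conjugate to $\left(\begin{smallmatrix} \lambda & 1 \\ 0 & \lambda \end{smallmatrix}\right)$. Dividing by the scalar $\lambda$ in $\mathrm{PGL}_2(k)$ gives the image of $\left(\begin{smallmatrix} 1 & \lambda^{-1} \\ 0 & 1 \end{smallmatrix}\right)$, and then conjugating by $\mathrm{diag}(\lambda,1)$ clears the off-diagonal entry, bringing $g$ to the standard unipotent element $u := \left(\begin{smallmatrix} 1 & 1 \\ 0 & 1 \end{smallmatrix}\right)$. A direct induction shows $u^n = \left(\begin{smallmatrix} 1 & n \\ 0 & 1 \end{smallmatrix}\right)$, so $u^p = I_2$ while $u^i \neq I_2$ for $0 < i < p$, confirming that $u$ has order exactly $p$ and that the reduction is consistent.

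Finally, since $G = \langle g \rangle$ is conjugate to $\langle u \rangle$, and the powers of $u$ are precisely the matrices $\left(\begin{smallmatrix} 1 & n \\ 0 & 1 \end{smallmatrix}\right)$ with $n$ ranging over $\mathbb{Z}/p\mathbb{Z} \cong \mathbb{F}_p$, the group $G$ is conjugate to $\left\{ \left(\begin{smallmatrix} 1 & c \\ 0 & 1 \end{smallmatrix}\right) : c \in \mathbb{F}_p \right\}$, which is exactly the asserted subgroup.
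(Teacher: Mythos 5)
Your proof is correct: the reduction to Jordan form, the observation that $k^{\times}$ has no $p$-torsion in characteristic $p$ (so a diagonalizable lift can never give an element of order $p$ in $\mathrm{PGL}_2(k)$), and the normalization of the Jordan block to $\left(\begin{smallmatrix} 1 & 1 \\ 0 & 1 \end{smallmatrix}\right)$ by a scalar and a diagonal conjugation are all sound, and they do yield that $G$ is conjugate to the full unipotent subgroup $\left(\begin{smallmatrix} 1 & \mathbb{F}_p \\ 0 & 1 \end{smallmatrix}\right)$. Note that the paper itself gives no proof of this statement --- it is imported verbatim as Proposition 4.1 of Faber's classification of finite $p$-irregular subgroups of $\mathrm{PGL}_2(k)$ --- and your Jordan-form argument is essentially the standard one underlying that reference, so there is no divergence to report.
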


\begin{thm}\label{thm:RedAut}
Let $H : y^2 = f(x)$ be a hyperelliptic curve of genus $g$ over a field of characteristic $p \neq 2$.
Then any subgroup of $\overline{\mathrm{Aut}}(H)$ (embedded in $\mathrm{PGL}_2(k)$) of order comprime to $p$ is conjugate to either of $\mathbb{Z}_n$, $D_n$, $A_4$, $S_4$ or $A_5$.
More precisely, there exists a hyperelliptic curve $H' : y^2 = f'(x)$ with an isomorphism $\rho : H' \to H$ such that the reduced automorphism group $\overline{G}_{f'}$ given in \eqref{eq:RedAut} at the beginning of this section is one of the five groups $G$ presented in Theorem \ref{thm:Faber}.
%In particular, if $p > 84 (g-1)$, then $\overline{\mathrm{Aut}}(H)$ itself is either of the above five types.
If $\overline{\rm Aut}(H)$ has an element whose order is divided by $p$, then $2g+1$ or $2g+2$ is divided by $p$.
%Any subgroup of $\mathrm{Aut}(H)$
\end{thm}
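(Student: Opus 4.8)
The plan is to read off both assertions from the embedding $\overline{\mathrm{Aut}}(H) \cong \overline{G}_f \hookrightarrow \mathrm{PGL}_2(k)$ of \eqref{eq:RedAut}, combined with the group-theoretic classifications of Theorem \ref{thm:Faber} and Proposition \ref{prop:Faber}. For the first assertion, I would let $\Gamma$ be any subgroup of $\overline{\mathrm{Aut}}(H) \cong \overline{G}_f$ with $p \nmid \#\Gamma$. Then $\Gamma$ is a finite subgroup of $\mathrm{PGL}_2(k)$ of order coprime to $p$, so Theorem \ref{thm:Faber} applies verbatim and shows that $\Gamma$ is conjugate in $\mathrm{PGL}_2(k)$ to one of its five listed subgroups, which are abstractly $\mathbb{Z}_n$, $D_n$, $A_4$, $S_4$, and $A_5$. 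This already yields the possible isomorphism types.

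For the refined statement, I would specialize to $\Gamma = \overline{G}_f$ in the case where $\#\overline{G}_f$ is coprime to $p$ (the only case in which $\overline{G}_{f'}$ can literally equal one of the prime-to-$p$ groups $G$). Theorem \ref{thm:Faber} then furnishes $P_0 \in \mathrm{PGL}_2(k)$ conjugating $\overline{G}_f$ to a standard $G$. Lifting $P_0$ to $\mathrm{GL}_2(k)$ and setting $f' := P_0.f$, normalized as in Remark \ref{rem:iso}, Lemma \ref{lem:isom} produces an isomorphism $\rho : H' \to H$ with $H' : y^2 = f'(x)$. A direct computation from the definition \eqref{eq:RedAut} shows that $\overline{G}_{f'}$ is exactly the $P_0$-conjugate of $\overline{G}_f$, hence equals $G$; this is the desired model.

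For the last assertion, suppose $\sigma \in \overline{G}_f$ has order $\ell$ with $p \mid \ell$. Since $\ell/p$ divides $\ell$, the power $\sigma^{\ell/p}$ has order exactly $p$ and generates a cyclic subgroup of $\mathrm{PGL}_2(k)$ of order $p$. By Proposition \ref{prop:Faber} this subgroup is conjugate to the unipotent group generated by $\left(\begin{smallmatrix}1&1\\0&1\end{smallmatrix}\right)$; passing to the corresponding model $H' : y^2 = f'(x)$ via Lemma \ref{lem:isom}, the generator becomes the translation $\tau : x \mapsto x+1$, so $\tau \in \overline{G}_{f'}$ gives $f'(x+1) = \lambda^2 f'(x)$ for some $\lambda \in k^\times$. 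Because $\tau$ fixes $\infty$, the substitution leaves the leading coefficient of $f'$ unchanged whether $\deg f' = 2g+1$ or $2g+2$, forcing $\lambda^2 = 1$ and hence $f'(x+1) = f'(x)$. Thus the roots of $f'$ are permuted by $\langle \tau \rangle \cong \mathbb{Z}_p$; no root is fixed (a fixed root would give $1=0$), so every orbit has size exactly $p$. Since $H'$ is smooth, $f'$ is separable, so its distinct roots number $\deg f'$; as they split into orbits of size $p$, we get $p \mid \deg f'$, and therefore $p \mid 2g+1$ or $p \mid 2g+2$.

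The bulk of the classification is a direct appeal to Theorem \ref{thm:Faber}, so the genuine labor lies in two bookkeeping points. The first is translating ``conjugate in $\mathrm{PGL}_2(k)$'' into ``equal for a different hyperelliptic model,'' which requires matching the $\mathrm{PGL}_2$-conjugation by $P_0$ with the curve isomorphism of Lemma \ref{lem:isom} and checking that $\overline{G}$ transforms by conjugation under \eqref{eq:RedAut}; this is routine but must be recorded precisely. The more delicate point, which I expect to be the main obstacle, is the $p$-torsion argument: one must verify $\lambda^2 = 1$ uniformly in both degree cases and then invoke smoothness (separability of $f'$) to guarantee that each $\mathbb{F}_p$-orbit of roots has full size $p$, so that the count of roots, and hence $\deg f'$, is divisible by $p$.
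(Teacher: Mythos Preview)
Your proposal is correct and follows essentially the same approach as the paper: apply Theorem \ref{thm:Faber} to obtain the conjugacy classification and transport the conjugating matrix to a change of hyperelliptic model, then for the $p$-part reduce via Proposition \ref{prop:Faber} to the translation $x\mapsto x+1$ and count roots of $f'$ in free $\mathbb{Z}_p$-orbits. The only cosmetic differences are that the paper invokes Cauchy's theorem to produce an element of order $p$ where you power up $\sigma^{\ell/p}$, and the paper phrases the final count in terms of ramification points rather than $\deg f'$; your extra verification that $\lambda^2=1$ is harmless but not actually needed, since $f'(x+1)=\lambda^2 f'(x)$ already sends roots to roots.
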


\begin{proof}
As for the first half part, it follows from Theorem \ref{thm:Faber} that there exists $P \in {\rm PGL}_2(k)$ such that $\overline{G}_{f} = P^{-1} G P$, where $G$ is one of the five groups given in Theorem \ref{thm:Faber}.
Hence, letting $H' : y^2 = f'(x) := P.f$ be a hyperelliptic curve with an isomorphism defined by $P$ from $H'$ to $H$, we have that $\overline{G}_{f'} = G$, namely there exists a one-to-one correspondence $\tau = \rho^{-1} \sigma \rho$ for $\tau \in \overline{\rm Aut}(H') = \overline{G}_{f'}$ and $\sigma \in \overline{\rm Aut}(H) = \overline{G}_{f}$, say
\[
\begin{CD}
H @>{\sigma}>> H \\
@A{\rho}AA @VV{\rho^{-1}}V   \\
H' @>{\tau}>> H' .
\end{CD}
\]
Assume that $\overline{\rm Aut}(H)$ has an element whose order is divided by $p$.
In this case, it follows from Cauchy's theorem that $\overline{\rm Aut}(H)$ has an element of order $p$.
Thus, by Proposition \ref{prop:Faber}, we may assume that $\overline{\rm Aut}(H)$ has a subgroup isomorphic to $\begin{pmatrix}
1 & \mathbb{F}_{p}\\
0 & 1
\end{pmatrix}$.
In particular, $H$ has an order-$p$ reduced automorphism $(x,y) \mapsto (x+1, \lambda y)$ represented by $\begin{pmatrix}
1 & 1\\
0 & 1
\end{pmatrix}$ and some $\lambda \in k^{\times}$.
From this, the number of ramification points is a multiple of $p$, as desired.
\end{proof}

\if 0
\begin{rem}
In the proof of Proposition \ref{prop:aut} (see the proof of \cite[Proposition 2.2.2]{MK22}), the field of definition for $H'$ is given as follows.
Assume that any entry of $A$ belongs to a field $K$, and also assume for simplicity that $\lambda = 1$.
Let $K'$ be an extension field of $K$ which contains all the $g+1$-th roots of unity, all the $\ell$-th roots of unity, and an $\ell$-th root of the primitive $g+1$-th root of unity.
Then, one can check that $P$ can be taken to be a matrix with entries in $K'$, and thus $H'$ is defined over $K'$.
For example, if $g=4$, $\ell = 3$ and $K=\mathbb{F}_{p^2}$, then we can take $K' = \mathbb{F}_{p^{12}}$.
    %\item Choosing $P$ and $\nu \in k^{\times}$ suitably, we may assume that $\sigma_{P,\nu}$ sends $H$ to a hyperelliptic curve $H' : y^2 = f(x)$ for some monic $f(x)$ of degree $9$ or $10$.
%\end{enumerate}
\end{rem}
\fi

%=================================================
\subsection{Some families of hyperelliptic curves whose reduced automorphism groups are dihedral}
%=================================================

In this subsection, we provide some explicit hyperellipitc curves whose reduced automorphism groups are $D_{g}$, $D_{g+1}$, $D_{2g}$ or $D_{2g+2}$.

First, we consider the cases of $\overline{\rm Aut}(H) \cong D_{2g}$ and $\overline{\rm Aut}(H) \cong D_{2g+2}$.

\begin{lem}\label{lem:D2g}
Assume that $g \geq 3$ and that $p$ does not divide $2g$, $2g+1$ nor $2g+2$.
Then we have the following:
\begin{enumerate}
    \item[(1)] A hyperelliptic curve of genus $g$ over $k$ defined by
    \begin{equation}\label{eq:D2g}
    y^2 = x^{2g+1} + x
    \end{equation}
    has the reduced automotphism group isomorphic to $D_{2g}$.
    
    Conversely, any hyperelliptic curve $H$ of genus $g$ over $k$ such that $\overline{\rm Aut}(H) $ contains a subgroup isomorphic to $\mathbb{Z}_{2g}$ is birational to \eqref{eq:D2g}.
    
    \item[(2)] A hyperelliptic curve of genus $g$ over $k$ defined by
    \begin{equation}\label{eq:D2g2}
    y^2 = x^{2g+2} + 1
    \end{equation}
    has the reduced automotphism group isomorphic to $D_{2g+2}$.
    
    Conversely, any hyperelliptic curve $H$ of genus $g$ over $k$ such that $\overline{\rm Aut}(H) $ contains a subgroup isomorphic to $\mathbb{Z}_{2g+2}$ is birational to \eqref{eq:D2g2}.
\end{enumerate}
\end{lem}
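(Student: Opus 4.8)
The plan is to prove each of the two parts by splitting it into a \emph{forward} direction (the displayed curve really has the asserted reduced automorphism group) and a \emph{converse} direction (any genus-$g$ curve whose reduced automorphism group contains the relevant cyclic group is birational to the displayed curve). Before starting either case I would record the one structural consequence of the hypothesis $p \nmid 2g$, $2g+1$, $2g+2$: since $\overline{\mathrm{Aut}}(H)$ is finite, if $p$ divided its order then by Cauchy's theorem it would contain an element of order $p$, which by the second assertion of Theorem \ref{thm:RedAut} forces $p \mid 2g+1$ or $p \mid 2g+2$, contrary to assumption. Hence $\#\overline{\mathrm{Aut}}(H)$ is prime to $p$, so the classification of Theorem \ref{thm:Faber} applies and Lemma \ref{lem:aut2} may be invoked for every element.

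For the converse directions the argument is short. In case (1), assume $\overline{\mathrm{Aut}}(H) \supseteq \mathbb{Z}_{2g}$ and apply Lemma \ref{lem:aut2} with $\ell = 2g$: for $g \geq 3$ the integer $2g$ divides neither $2g+1$ nor $2g+2$, so only case (3) of that lemma occurs, with $n\ell = 2g$ forcing $n = 1$. Since there are then no positive multiples of $2g$ strictly below $2g$, the form collapses to $f(x) = x(x^{2g}+1) = x^{2g+1}+x$, i.e.\ $H$ is birational to \eqref{eq:D2g}. Case (2) is identical with $\ell = 2g+2$: only $2g+2 \mid 2g+2$ is possible, so Lemma \ref{lem:aut2}(1) with $n = 1$ gives $f(x) = x^{2g+2}+1$, i.e.\ \eqref{eq:D2g2} (both normalizations of highest and lowest coefficients being justified by Remark \ref{rem:iso}).

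For the forward directions I would exhibit explicit generators of $\overline{G}_f \subseteq \mathrm{PGL}_2(k)$ via \eqref{eq:RedAut}. Writing $\zeta$ for a primitive $2g$-th (resp.\ $(2g+2)$-th) root of unity, a direct check of $P.f = (\gamma x + \delta)^{2g+2} f\!\left(\tfrac{\alpha x + \beta}{\gamma x+\delta}\right)$ gives, for $P = \mathrm{diag}(\zeta,1)$, the relation $P.f = \zeta f$ (resp.\ $P.f = f$), and for $P = \left(\begin{smallmatrix} 0 & 1 \\ 1 & 0 \end{smallmatrix}\right)$ the relation $P.f = f$ in both cases; so both classes lie in $\overline{G}_f$. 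The first has order $2g$ (resp.\ $2g+2$) in $\mathrm{PGL}_2(k)$, and the matrix identity $\left(\begin{smallmatrix} 0 & 1 \\ 1 & 0 \end{smallmatrix}\right)\mathrm{diag}(\zeta,1)\left(\begin{smallmatrix} 0 & 1 \\ 1 & 0 \end{smallmatrix}\right) = \mathrm{diag}(1,\zeta) \equiv \mathrm{diag}(\zeta^{-1},1)$ in $\mathrm{PGL}_2(k)$ shows the involution inverts the cyclic generator. Thus $\overline{\mathrm{Aut}}(H) \supseteq D_{2g}$ (resp.\ $D_{2g+2}$).

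It then remains to establish the reverse inclusions, that no strictly larger group occurs, and this is the step I expect to carry the real content. Since $\overline{\mathrm{Aut}}(H)$ is non-abelian and of order prime to $p$, Theorem \ref{thm:Faber} leaves only the types $D_n$, $A_4$, $S_4$, $A_5$. The groups $A_4$, $S_4$, $A_5$ contain no element of order exceeding $3$, $4$, $5$ respectively, whereas $\overline{\mathrm{Aut}}(H)$ has an element of order $2g \geq 6$ (resp.\ $2g+2 \geq 8$), ruling them out. For $D_n$, the inclusion $\mathbb{Z}_{2g} \subseteq \overline{\mathrm{Aut}}(H)$ gives $2g \mid n$ (resp.\ $(2g+2)\mid n$), while Lemma \ref{lem:aut2} forces the cyclic part to satisfy $n \mid 2g$, $2g+1$ or $2g+2$; combining the two divisibilities pins down $n = 2g$ (resp.\ $n = 2g+2$). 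The main obstacle is organizing this final case analysis cleanly and verifying that the hypotheses $g \geq 3$ and $p \nmid 2g(2g+1)(2g+2)$ are exactly what exclude both the sporadic groups and the spurious larger dihedral cases; the $\mathrm{PGL}_2(k)$ computations underlying the forward directions are routine.
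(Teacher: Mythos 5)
Your proposal is correct and follows essentially the same route as the paper's proof: exhibit the dihedral subgroup via the explicit elements $\mathrm{diag}(\zeta,1)$ and $\left(\begin{smallmatrix}0&1\\1&0\end{smallmatrix}\right)$, rule out larger groups by combining the classification in Theorems \ref{thm:Faber} and \ref{thm:RedAut} with the element-order bound from Lemma \ref{lem:aut2}, and deduce both converses directly from Lemma \ref{lem:aut2}. If anything, your case analysis is slightly more thorough than the paper's, which excludes $A_4$ and $A_5$ via their dihedral subgroups but never explicitly mentions $S_4$, nor the cases $D_n$ with $2g < n \leq 2g+2$; your element-order and divisibility arguments ($2g \mid n$ together with $n \mid 2g$, $2g+1$ or $2g+2$ forcing $n = 2g$) dispose of these cleanly.
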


\begin{proof}
We here prove the case (1) only, since the case (2) is proved similarly.
Let $H: y^2 = x^{2g+1} + x$, and $\zeta$ a primitive $2g$-th root of unity in $k$.
Since $H$ has two automorphisms $\sigma : (x,y) \mapsto (\zeta x,y)$ of order $2g$ and $\tau : (x,y) \mapsto \left(\frac{1}{x},\frac{y}{x^{g+1}} \right)$ of order $2$ with $(\sigma \tau)^2 = \iota_H$, we have $\overline{\mathrm{Aut}}(H)\supset D_{2g}$.
Here $\overline{\mathrm{Aut}}(H)$ is either of the five types in Theorem \ref{thm:RedAut}, but clearly $\mathbb{Z}_n$ is impossible.
Recall from Lemma \ref{lem:aut2} that the order of any element in $\overline{\rm Aut}(H)$ is upper-bounded by $2g+2$, and thus $\overline{\mathrm{Aut}}(H) = D_n$ with $n > 2g+2$ does not hold.
As for $A_5$, the dihedral groups contained in it are $D_2 = V_4$, $D_{3}=S_3$ and $D_5$, and thus $\overline{\mathrm{Aut}}(H)$ is not isomorphic to $A_5$.
Similarly, $A_4$ is also not possible.
% Conversely, if a hyperelliptic curve $H$ of genus $g$ over $k$ has the reduced automorphism group $\overline{\rm Aut}(H) $ containing a subgroup isomorphic to $D_{2g+2}$, then $\overline{\rm Aut}(H) $ has an element of order $2g+2$, and hence it follows from Lemma \ref{lem:aut2} that $H$ is isomorphic to $y^2 = x^{2g+2} + 1$, as desired.
The converses follow immediately from Lemma \ref{lem:aut2}.
\end{proof}

The next cases which we consider are the cases $\overline{\rm Aut}(H) \supset D_{g}$ and $\overline{\rm Aut}(H) \supset D_{g+1}$.

\begin{lem}\label{lem:Dg}
Assume that $g \geq 3$ and that $p$ does not divide $g$ nor $g+1$.
Then we have the following:
\begin{enumerate}
    \item[(1)] A hyperelliptic curve of genus $g$ over $k$ defined by
    \begin{equation}\label{eq:Dg}
    y^2 = x^{2g+1} + a x^{g+1} + x
    \end{equation}
    for some $a \in k$ has the reduced automorphism group containing $D_g$.
    
    Conversely, any hyperelliptic curve $H$ of genus $g$ over $k$ such that $\overline{\rm Aut}(H) $ contains a subgroup isomorphic to $\mathbb{Z}_{g}$ is birational to \eqref{eq:Dg}.
    
    \item[(2)] A hyperelliptic curve of genus $g$ over $k$ defined by
    \begin{equation}\label{eq:Dg1}
    y^2 = x^{2g+2} + a x^{g+1} + 1
    \end{equation}
    for some $a \in k$ has the reduced automorphism group containing $D_{g+1}$
    
    Conversely, any hyperelliptic curve $H$ of genus $g$ over $k$ such that $\overline{\rm Aut}(H) $ contains a subgroup isomorphic to $\mathbb{Z}_{g+1}$ is birational to \eqref{eq:Dg1}.
\end{enumerate}
In the case (1) (resp.\ (2)), $\overline{\rm Aut}(H) $ is isomorphic to $D_g$, $D_{2g}$ (resp.\ $D_{g+1}$, $D_{2g+2}$), $A_4$, $S_4$ or $A_5$.
%(Hence, since $S_4 \supset A_4$, there is no hyperelliptic curve $H$ of genus $4$ over $k$ such that $\overline{\rm Aut}(H) \cong S_4$.)

% A hyperelliptic curve of genus $g$ over $k$ given by $y^2 = x^{2g+1} + a x^{g+1} + x$ (resp.\ $y^2 = x^{2g+2} + a x^{g+1} + 1$) for some $a \in k$ has the reduced automorphism group containing $D_g$ (resp.\ $D_{g+1}$) as a subgroup.
\end{lem}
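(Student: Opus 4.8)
The plan is to treat parts (1) and (2) in parallel, since they differ only in whether $\deg f = 2g+1$ or $2g+2$, and then to deduce the final list of possible groups from Theorem \ref{thm:RedAut}. First I would prove the forward containments. For (1), on $H : y^2 = x^{2g+1} + a x^{g+1} + x$ in \eqref{eq:Dg} I would exhibit two candidate reduced automorphisms: the scaling $\sigma$ represented by $\mathrm{diag}(\zeta,1)$, where $\zeta$ is a primitive $g$-th root of unity in $k$ (which exists precisely because $p \nmid g$), and the involution $\tau$ represented by the anti-diagonal matrix $\left(\begin{smallmatrix}0&1\\1&0\end{smallmatrix}\right)$, i.e. $(x,y)\mapsto (1/x, y/x^{g+1})$. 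Using the action $P.f = (\gamma x + \delta)^{2g+2} f((\alpha x + \beta)/(\gamma x + \delta))$ and the description \eqref{eq:RedAut} of $\overline{\mathrm{Aut}}(H) \cong \overline{G}_f$, a direct substitution shows $\mathrm{diag}(\zeta,1).f = f(\zeta x) = \zeta f(x)$ and $\left(\begin{smallmatrix}0&1\\1&0\end{smallmatrix}\right).f = x^{2g+2} f(1/x) = f(x)$, so both matrices lie in $\overline{G}_f$. Since in $\mathrm{PGL}_2(k)$ one has $\tau \sigma \tau^{-1} = \sigma^{-1}$ together with $\sigma^g = \tau^2 = 1$, the subgroup $\langle \sigma, \tau\rangle$ is isomorphic to $D_g$. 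For (2), on $y^2 = x^{2g+2} + a x^{g+1} + 1$ from \eqref{eq:Dg1} the same $\tau$ and the scaling $\sigma$ by a primitive $(g+1)$-th root of unity (available since $p \nmid g+1$) satisfy the analogous identities and generate $D_{g+1}$.

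For the converses I would invoke Lemma \ref{lem:aut2}. A subgroup isomorphic to $\mathbb{Z}_g$ in $\overline{\mathrm{Aut}}(H)$ provides an order-$\ell$ reduced automorphism with $\ell = g$, so $\ell$ must divide $2g$, $2g+1$ or $2g+2$. Here the point is that for $g \ge 3$ exactly one of these occurs: $g \mid 2g+1$ would force $g \mid 1$ and $g \mid 2g+2$ would force $g \mid 2$, both impossible, so only the case $g \mid 2g$ (case (3) of Lemma \ref{lem:aut2}) survives, with $n\ell = 2g$ and hence $n = 2$. Reading off the normal form gives $f(x) = x(x^{2g} + a_{g+1} x^{g} + 1) = x^{2g+1} + a_{g+1} x^{g+1} + x$, which is \eqref{eq:Dg}. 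For (2), an order-$(g+1)$ element falls only into case (1) of Lemma \ref{lem:aut2} (since $(g+1)\mid 2g+1$ and $(g+1)\mid 2g$ are both impossible for $g\ge 3$), yielding $f(x) = x^{2g+2} + a_{g+1} x^{g+1} + 1$, i.e. \eqref{eq:Dg1}.

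For the final classification clause I would combine the containment $D_g \subseteq \overline{\mathrm{Aut}}(H)$ (resp. $D_{g+1}$) with Theorem \ref{thm:RedAut}. The first step is to rule out that $p$ divides $\#\overline{\mathrm{Aut}}(H)$: by Cauchy's theorem an order-$p$ element would exist, whence by the second assertion of Theorem \ref{thm:RedAut} the number $2g+2$ of Weierstrass points would be a multiple of $p$; but $p \nmid 2g+2 = 2(g+1)$ since $p \neq 2$ and $p \nmid g+1$, a contradiction. Thus $\overline{\mathrm{Aut}}(H)$ has order coprime to $p$, so by Theorem \ref{thm:RedAut} it is isomorphic to one of $\mathbb{Z}_n$, $D_n$, $A_4$, $S_4$, $A_5$. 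The non-abelian subgroup $D_g$ ($g \ge 3$) excludes the cyclic case. If $\overline{\mathrm{Aut}}(H) \cong D_n$, then $g \mid n$ and, since every element has order $\le 2g+2$ by Lemma \ref{lem:aut2}, we get $n \le 2g+2$; together these force $n \in \{g, 2g\}$, giving $D_g$ or $D_{2g}$ (and analogously $D_{g+1}$ or $D_{2g+2}$ in part (2)). The remaining possibilities are exactly $A_4$, $S_4$, $A_5$, which completes the stated list.

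The main obstacle I anticipate is the final classification step rather than the explicit computations: one must be careful to control the $p$-divisible case, which is precisely where the hypothesis $p \nmid g+1$ (via the Weierstrass-point count in Theorem \ref{thm:RedAut}) is essential, and then to combine the order bound from Lemma \ref{lem:aut2} with the abstract list from Theorem \ref{thm:RedAut} to pin down the dihedral possibilities. I would also note that the containment $D_g \subseteq \overline{\mathrm{Aut}}(H)$ in fact excludes $A_4$ for $g \ge 3$ (its only dihedral subgroups being $D_1$ and $D_2$), so the stated list is an over-approximation that is harmless for the conclusion; establishing the tighter statement is not needed here, as the coarse list suffices for the applications in Theorem \ref{thm:app}.
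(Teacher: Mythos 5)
Your treatment of the two containments and of the converses coincides with the paper's own proof: the paper likewise exhibits $\sigma : (x,y) \mapsto (\zeta x, y)$ and $\tau : (x,y) \mapsto \left(\frac{1}{x}, \frac{y}{x^{g+1}}\right)$ and concludes $\overline{\mathrm{Aut}}(H) \supset D_g$ (resp.\ $D_{g+1}$), and it dispatches the converses with the single sentence that they ``follow immediately from Lemma \ref{lem:aut2}.'' Your explicit divisibility analysis (the impossibility of $g \mid 2g+1$ and $g \mid 2g+2$ for $g \geq 3$, forcing case (3) of Lemma \ref{lem:aut2} with $n=2$, and analogously for $g+1$) is exactly the routine verification the paper omits, and it is done correctly. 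You go beyond the paper in attempting an actual proof of the final classification clause, which the paper's proof passes over in silence.

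However, your argument for that clause has a genuine gap at the coprimality step. Theorem \ref{thm:RedAut} does not say that an order-$p$ element of $\overline{\mathrm{Aut}}(H)$ forces $p \mid 2g+2$; it says it forces $p \mid 2g+1$ \emph{or} $p \mid 2g+2$ (the translation $x \mapsto x+1$ fixes exactly one point of $\mathbb{P}^1$, so the orbit count on the $2g+2$ Weierstrass points gives $2g+2 \equiv 0$ or $1 \pmod{p}$). The hypotheses of the lemma ($p \nmid g$, $p \nmid g+1$, $p$ odd) rule out $p \mid 2g+2$ but not $p \mid 2g+1$. For instance $g=3$, $p=7$ satisfies the hypotheses, yet the curve $y^2 = x^7 + x$ of family \eqref{eq:Dg} (with $a=0$) admits the order-$7$ reduced automorphisms $x \mapsto x + c$ for the seven roots of $c^7 + c = 0$, so $\overline{\mathrm{Aut}}(H)$ contains $\mathbb{Z}_7$ and the stated list of groups fails. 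Hence your step ``the number $2g+2$ of Weierstrass points would be a multiple of $p$'' is not justified, and the final clause is in fact only correct under the additional hypothesis $p \nmid 2g+1$ (which the companion Lemma \ref{lem:D2g} does assume, and which holds automatically in the paper's application, since for $g=4$ and $p \geq 7$ one has $2g+1 = 9$). To repair your proof, add this hypothesis explicitly or restrict the clause to the appendix's setting; the remainder of your pinning-down argument ($g \mid n$ and $n \leq 2g+2$ forcing $n \in \{g, 2g\}$, the exclusion of the cyclic case by non-abelianness, and the observation that $A_4$ is in fact excluded for $g \geq 3$) is sound.
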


\begin{proof}
The reduced automotphism group of $y^2 = x^{2g+1} + a x^{g+1} + x$ (resp.\ $y^2 = x^{2g+2} + a x^{g+1} + 1$) has elements $\sigma : (x,y) \mapsto (\zeta x,y)$ of order $g$ (resp.\ $g+1$) and $\tau : (x,y) \mapsto \left(\frac{1}{x},\frac{y}{x^{g+1}} \right)$ of order $2$ with $(\sigma \tau)^2 = \iota_H$.
Thus, we have $\overline{\mathrm{Aut}}(H)\supset D_{g}$ (resp.\ $\overline{\mathrm{Aut}}(H)\supset D_{g+1}$).

The converses follow immediately from Lemma \ref{lem:aut2}.
\end{proof}

% \begin{cor}
% Let $H$ be a hyperelliptic curve of genus $g$ over $k$ with $\overline{\mathrm{Aut}}(H)\supset \mathbb{Z}_{g}$ (resp.\ $\overline{\mathrm{Aut}}(H)\supset \mathbb{Z}_{g+1}$).
% If $\overline{\mathrm{Aut}}(H)$ is not isomorphic to $A_4$, $S_4$ nor $A_5$, then $\overline{\mathrm{Aut}}(H)$ is isomorphic to $D_{g}$ or $D_{2g}$ (resp.\ $D_{g+1}$ or $D_{2g+2}$), and in this case, $H$ is birational to \eqref{eq:Dg} or \eqref{eq:D}
% \end{cor}

% \begin{lem}
% If $\overline{\mathrm{Aut}}(H) \supset D_2$, then $H$ is isomorphic to one of the following:
% \begin{itemize}
%     \item $y^2 = x^{2g+2} + a_{2g} x^{2g} + a x^6 + b x^4 + a x^2 + 1$ for some $a,b \in k$.
%     \item $y^2 = x^{10} + a x^8 + b x^6 + b x^4 + a x^2 + 1$ for some $a,b \in k$.
% \end{itemize}
% \end{lem}

% \begin{proof}

% \end{proof}

%=================================================
\subsection{Classification of automorphism groups and defining equations when $g=4$}
%=================================================

Assume $p \geq 7$.
As in Example \ref{ex:aut} for the genus-$3$ case, it follows from Lemma \ref{lem:aut2} together with Theorem \ref{thm:RedAut} that hyperelliptic curves $H$ of genus $4$ over $k$ with $\# \overline{\mathrm{Aut}}(H) \geq 2$ are (roughly) classified up to $k$-isomorphisms:
\begin{enumerate}
    %\item $\overline{\mathrm{Aut}}(H) = \{ 0 \}$:
    %y$^2 = x^{10} + a x^9 + b x^8 + c x^4 + d x^2 + 1$ for $a, b, c, d \in k$,
    \item ($\overline{\mathrm{Aut}}(H) \supset \mathbb{Z}_2$):
    $y^2 = x^{10} + a x^8 + b x^6 + c x^4 + d x^2 + 1$ for $a, b, c, d \in k$,
    \item ($\overline{\mathrm{Aut}}(H) \supset \mathbb{Z}_2$):
    $y^2 = x^{9} + a x^7 + b x^5 + c x^3 + x$ for $a, b, c \in k$,
    \item ($\overline{\mathrm{Aut}}(H) \supset \mathbb{Z}_3$): $y^2 = x^{10} + a x^7 + b x^4 + x$ for $a, b \in k$,
    \item ($\overline{\mathrm{Aut}}(H) \supset \mathbb{Z}_4$): $y^2 = x^{9} + a x^5 + x$ for $a \in k$,
    \item ($\overline{\mathrm{Aut}}(H) \supset \mathbb{Z}_5$): $y^2 = x^{10} + a x^5 + 1$ for $a \in k$,
    \item ($\overline{\mathrm{Aut}}(H) \supset \mathbb{Z}_8$): $y^2 = x^{9} + x$,
    \item ($\overline{\mathrm{Aut}}(H) \supset \mathbb{Z}_9$): $y^2 = x^{10} + x$,
    \item ($\overline{\mathrm{Aut}}(H) \supset \mathbb{Z}_{10}$): $y^2 = x^{10} + 1$.
\end{enumerate}
Moreover, since $D_n$ has an element of order $n$, we have that $\overline{\mathrm{Aut}}(H) $ is isomorphic to one of the following:
$\{ 0 \}$, $\mathbb{Z}_n$, $D_n$ for $n= 2,3,4,5,8,9,10$, $A_4$, $S_4$ or $A_5$.

% \begin{thm}
% \textcolor{red}{The reduced automorphism group of a hyperelliptic curve of genus $4$ over $k$ is either of the following: $\{ 0 \}$, $\mathbb{Z}_2$, $\mathbb{Z}_3$, $D_4$, $D_5$, $A_4$, $D_8$, $\mathbb{Z}_9$, $D_{10}$}.
% \end{thm}
In the following, we provide a more precised classification: The possible types of $\overline{\mathrm{Aut}}(H) $ and corresponding equations $y^2 = f(x)$, in order to prove Theorem \ref{thm:app}.
Note that $D_2$ and $D_3$ are nothing but the Klein 4-group $V_4 := \mathbb{Z}_2 \times \mathbb{Z}_2$ and the symmetric group of degree three $S_3$ respectively.
We start with considering the cases where $\overline{\rm Aut}(H)\cong \mathbb{Z}_2$ or $\overline{\rm Aut}(H) \cong D_2=V_4$.

\begin{lem}\label{lem:Z2V4}
Let $H$ be a hyperelliptic curve of genus $4$ over an algebraically closed field $k$ of characteristic $p = 0$ or $p \geq 3$.
Assume that $\overline{\rm Aut}(H) \supset \mathbb{Z}_2$, i.e., the reduced automorphism group has an element $\sigma$ of order $2$;
$\sigma^2 = id_{H}$ or $\iota_H$ in $\mathrm{Aut}(H)$, equivalently ${\rm Aut}(H) \supset V_4$ or $\mathbb{Z}_4$.
In each case, $H$ is isomorphic to $y^2 = f(x)$ given as follows:
\begin{enumerate}
    \item[{\bf 2-1}.] $({\rm Aut}(H) \supset V_4)$ $y^2 = x^{10} + a x^8 + b x^6 + c x^4 + d x^2 + 1$ for some $a, b, c, d \in k$.
    \item[{\bf 2-2}.] $({\rm Aut}(H) \supset \mathbb{Z}_4)$ $y^2 = x^{9} + a x^7 + b x^5 + c x^3 + x$ for some $a, b, c\in k$.
\end{enumerate}
Moreover, if $\overline{\rm Aut}(H) \supset V_4$, then $H$ is isomorphic to $y^2 = f(x)$ given as follows:
\begin{enumerate}
    \item[{\bf 4-1}.] $y^2 = (x^2+1)(x^4 + A x^2 + 1) (x^4 + B x^2 + 1) = x^{10} + a x^8 + b x^6 + b x^4 + a x^2 + 1$ for some $A, B, a, b \in k$, so that ${\rm Aut}(H) \supset D_4$.
    \item[{\bf 4-2}.] $y^2 = x(x^4 + A x^2 + 1) (x^4 + B x^2 + 1) = x^{9} + a x^7 + b x^5 + a x^3 + x $ for some $A, B, a, b \in k$, so that ${\rm Aut}(H) \supset D_4$.
    \item[{\bf 4-3}.] $y^2 = x(x^4 -1) (x^4 + A x^2 + 1) = x^{9} + A x^7 - A x^3 - x $ for some $A\in k$, so that ${\rm Aut}(H) \supset Q_8$.
\end{enumerate}
\end{lem}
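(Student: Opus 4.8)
The plan is to reduce everything to the normal forms furnished by Proposition~\ref{prop:aut} and Lemma~\ref{lem:aut2}, and then to read off the full automorphism group by tracking the scalar parts of the relevant automorphisms.

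For the first assertion (cases \textbf{2-1} and \textbf{2-2}) I would apply Proposition~\ref{prop:aut} to the given order-$2$ element $\sigma \in \overline{\mathrm{Aut}}(H)$, so that after replacing $H$ by an isomorphic curve $\sigma$ is represented by $(\mathrm{diag}(-1,1),\mu')$. Since $g=4$, Lemma~\ref{lem:aut2} with $\ell=2$ (noting $2 \mid 2g+2 = 10$ and $2 \mid 2g = 8$, but $2 \nmid 2g+1 = 9$) yields exactly the two shapes $f(x) = x^{10} + a x^8 + b x^6 + c x^4 + d x^2 + 1$ (when $\deg f = 10$) and $f(x) = x^9 + a x^7 + b x^5 + c x^3 + x$ (when $\deg f = 9$). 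The dichotomy $\mathrm{Aut}(H) \supset V_4$ versus $\mathrm{Aut}(H) \supset \mathbb{Z}_4$ is then decided by the order of $\sigma$ inside $\mathrm{Aut}(H)$: from $\mu' = \pm\mu^{g+1} = \mp 1$ in the degree-$10$ case one gets $\sigma^2 = \mathrm{id}_H$, hence $\langle \sigma,\iota\rangle \cong V_4$, whereas from $\mu' = \pm\sqrt{\mu^{2g+1}} = \pm i$ in the degree-$9$ case one gets $\sigma^2 = \iota_H$, hence $\langle \sigma\rangle \cong \mathbb{Z}_4$.

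For the ``moreover'' part I would first use Theorem~\ref{thm:RedAut} (via Theorem~\ref{thm:Faber}, case (2) with $\zeta = -1$) to conjugate the subgroup $V_4 \subset \overline{\mathrm{Aut}}(H) \hookrightarrow \mathrm{PGL}_2(k)$ into the standard position generated by $s \colon x \mapsto -x$ and $t \colon x \mapsto 1/x$; after the corresponding isomorphism we may assume $s,t \in \overline{G}_f$ in the notation of \eqref{eq:RedAut}. Invariance of $f$ under $s$, i.e.\ $f(-x) = \lambda_s^2 f(x)$, forces $f$ to be even of degree $10$ or odd of degree $9$, as above. Invariance under $t$, i.e.\ $x^{10} f(1/x) = \lambda_t^2 f(x)$, then compares the top and bottom coefficients to give $\lambda_t^4 = 1$, so $\lambda_t^2 = \pm 1$; the sign is the crucial invariant. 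Here I would be careful \emph{not} to normalize the lowest coefficient of $f$ to $1$ prematurely, since it is pinned to $\lambda_t^2$. When $\lambda_t^2 = 1$ one gets a palindromic polynomial, yielding $f = x^{10} + a x^8 + b x^6 + b x^4 + a x^2 + 1$ (case \textbf{4-1}) or $f = x^9 + a x^7 + b x^5 + a x^3 + x$ (case \textbf{4-2}); when $\lambda_t^2 = -1$ the anti-palindromic condition together with $p \neq 2$ kills the middle coefficient and gives $f = x^9 + A x^7 - A x^3 - x$ (case \textbf{4-3}). The stated factorizations follow because a palindromic even quintic in $u = x^2$ has $u = -1$ as a root (extracting $x^2+1$) and a palindromic quartic in $u$ splits as $(u^2 + Au + 1)(u^2 + Bu + 1)$, and similarly for the odd and the anti-palindromic cases (the latter extracting $x^4 - 1$).

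Finally, to pin down $\mathrm{Aut}(H)$ I would compute, for each of the three involutions $s$, $t$ and $st \colon x \mapsto -1/x$, the square of its lift $\tilde{\cdot}$ in $\mathrm{Aut}(H)$: a direct computation gives $\tilde s^{\,2} = (x, \mu'^2 y)$, $\tilde t^{\,2} = (x, \lambda_t^2 y)$ and $\widetilde{st}^{\,2} = (x, -\lambda_{st}^2 y)$, each of which equals $\mathrm{id}_H$ or $\iota_H$, and the preimage of $V_4$ in $\mathrm{Aut}(H)$ is identified by how many of these squares equal $\iota_H$: exactly one gives $D_4$ and all three give $Q_8$. Running through the cases (with $\lambda_{st}^2$ determined by the parity of $f$) yields $D_4$ for \textbf{4-1} and \textbf{4-2} and $Q_8$ for \textbf{4-3}. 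The main obstacle, and the part needing genuine care, is precisely this bookkeeping of scalar parts: keeping the normalization honest so the $\pm 1$ distinction survives, and correctly reconciling the degree-$9$ and degree-$10$ models (some $D_4$, respectively $Q_8$, curves admit both, and one passes between them by moving a ramification point to infinity via Remark~\ref{rem:iso}) so that each isomorphism class is represented exactly once.
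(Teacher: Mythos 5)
Your route for the ``moreover'' part is genuinely different from the paper's. The paper argues geometrically: once $V_4$ is conjugated to $\{\pm x, \pm 1/x\}$, the set of ten ramification points is stable under this action, whose orbits are generically $\{\alpha,-\alpha,1/\alpha,-1/\alpha\}$ with the short orbits $\{0,\infty\}$, $\{\pm 1\}$, $\{\pm i\}$; counting to ten forces $f$ to be a constant multiple of $(x^2\pm 1)g_A g_B$, $x\,g_A g_B$ or $x(x^4-1)g_B$ with $g_A(x)=x^4+Ax^2+1$, and the $(x^2-1)$ variant is folded into case {\bf 4-1} via $(x,y)\mapsto(ix,iy)$. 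Your coefficient analysis of the invariance equations $f(-x)=\lambda_s^2 f(x)$ and $x^{10}f(1/x)=\lambda_t^2 f(x)$ reaches the same normal forms by palindromy, and it actually buys something the paper's proof omits: the paper never verifies the claims ${\rm Aut}(H)\supset D_4$ or $Q_8$ (this is deferred to ``straightforward computation'' in the proof of Theorem \ref{thm:app}), whereas your tracking of $\mu'^2$, $\lambda_t^2$ and the squares of the three lifted involutions does establish them, and your counting criterion (exactly one lift squaring to $\iota_H$ gives $D_4$, all three give $Q_8$) is group-theoretically sound, since that count separates $D_4$, $Q_8$, $\mathbb{Z}_4\times\mathbb{Z}_2$ and $\mathbb{Z}_2^3$ among the order-$8$ extensions of $V_4$ by $\langle\iota_H\rangle$. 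Your first half (Proposition \ref{prop:aut} plus Lemma \ref{lem:aut2} with $\ell=2$, reading the $V_4$ versus $\mathbb{Z}_4$ dichotomy off $\mu'^2=1$ in degree $10$ and $\mu'^2=-1$ in degree $9$) is essentially the paper's intended argument, which it carries out by reference to Lemma \ref{lem:isom3}.

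There is, however, one genuine omission in your case split. Your sentence ``when $\lambda_t^2=-1$ the anti-palindromic condition kills the middle coefficient and gives $f=x^9+Ax^7-Ax^3-x$'' silently assumes the anti-palindromic case occurs only in degree $9$. It does not: an even $f$ of degree $10$ with $\lambda_t^2=-1$ is perfectly possible and gives $f=x^{10}+ax^8+bx^6-bx^4-ax^2-1$, where no coefficient is killed because the six even-degree coefficients pair off completely (there is no self-paired middle term). This subcase is nonempty --- $y^2=x^{10}-1$ realizes it with the standard $V_4$ --- so your enumeration as written is incomplete. The fix stays inside your framework: setting $u=x^2$, the anti-palindromic quintic $F(u)=u^5+au^4+bu^3-bu^2-au-1$ satisfies $F(1)=0$, so $(x^2-1)\mid f$ with palindromic octic cofactor, i.e.\ $f=(x^2-1)g_A g_B$, and the twist $(x,y)\mapsto(ix,iy)$ carries this to the {\bf 4-1} normal form $(x^2+1)g_{-A}g_{-B}$ --- exactly the identification the paper makes at the end of its proof. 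With that subcase added, your argument is complete and correct.
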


\begin{proof}
The first half part is proved similarly to the proof of (1) $\Rightarrow$ (2) of Lemma \ref{lem:isom3}, based on Proposition \ref{prop:aut}.
To prove the second half part, we suppose that $\overline{\rm Aut}(H) \supset V_4$, and let $y^2 = f(x)$ be an equation defining $H$.
In this case, we may assume from Theorems \ref{thm:Faber} and \ref{thm:RedAut} that $H$ has automorphisms $(x,y) \mapsto (-x,\lambda y)$, $(x,y) \mapsto \left( \frac{1}{x}, \frac{\mu y}{x^5} \right)$ and $(x,y) \mapsto \left( \frac{-1}{x}, \frac{\lambda \mu y}{x^5} \right)$ with $\lambda^2 = \pm 1$ and $\mu^2 = \pm 1$.
Thus, if $(\alpha, 0)$ is a ramification point of $H$, then so are $(\pm \alpha, 0)$ and $\left( \pm \frac{1}{\alpha}, 0 \right)$.
This implies that $f(x)$ is a constant multiple of $f_1(x) := (x^2 \pm 1) g_A(x) g_B(x)$, $f_2 (x):=x g_A(x) g_B(x)$ or $f_3(x):=x (x^4-1)g_B(x)$ with $g_A(x) := (x^4 + A x^2 + 1)$ for $A = \alpha^2 + \frac{1}{\alpha^2}$ and $g_{B}(x) = x^4 + B x^2 + 1$ for $B = \beta^2 + \frac{1}{\beta^2}$, where $(\alpha, 0)$ and $(\beta, 0)$ are ramification points of $H$ with $\alpha \neq 0$ and $\beta \neq 0$.
Therefore, $H$ is isomorphic to $y^2=f_j(x)$ for some $j$, where $y^2=(x^2+1)g_A(x)g_B(x)$ and $y^2 = (x^2-1)g_{-A}(x)g_{-B}(x)$ are isomorphic via $(x,y) \mapsto (i x, i y)$.
\end{proof}

Next we consider the cases $A_4$, $S_4$ and $A_5$.

\begin{lem}\label{lem:A4orS4}
If the reduced automorphism group of a hyperelliptic curve $H$ of genus $4$ over $k$ of characteristic $p \notin \{ 2, 3 \}$ contains a subgroup isomorphic to $A_4$, then it is isomorphic to $A_4$ itself, $S_4$, or its order is divided by $12p$.
Hence if $p \geq 7$, then $\overline{\rm Aut}(H) \cong A_4$ or $S_4$.
\end{lem}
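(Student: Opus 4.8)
The plan is to set $G := \overline{\mathrm{Aut}}(H)$, regarded via \eqref{eq:RedAut} as a finite subgroup of $\mathrm{PGL}_2(k)$, and to exploit the nonempty $G$-invariant set $B \subset \mathbb{P}^1$ consisting of the $2g+2 = 10$ branch points of the hyperelliptic cover $H \to \mathbb{P}^1 = H/\langle \iota \rangle$ (the images of the Weierstrass points); this set is $G$-invariant because $\iota$ is central in $\mathrm{Aut}(H)$. I would then split into the two cases $p \nmid \# G$ and $p \mid \# G$.

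First suppose $p \nmid \# G$. Then Theorem \ref{thm:RedAut} (equivalently Theorem \ref{thm:Faber}) applies to $G$ itself, so $G$ is conjugate to one of $\mathbb{Z}_n$, $D_n$, $A_4$, $S_4$ or $A_5$. Since $A_4 \leq G$ is nonabelian it is neither cyclic nor a subgroup of any dihedral group (every subgroup of $D_n$ is cyclic or dihedral, and $A_4$ is neither: having no element of order $4$ or $6$, it is not isomorphic to $D_6$). Hence $G \in \{A_4, S_4, A_5\}$, and it remains to exclude $G \cong A_5$. In this subcase $p \neq 2,3,5$ (as $p \nmid 60$), so the primitive fifth root of unity of Theorem \ref{thm:Faber}(5) is genuine and $A_5$ acts on $\mathbb{P}^1$ as the classical icosahedral group. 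Every point stabilizer is then cyclic, being conjugate into a maximal torus $\cong \mathbb{G}_m$ of $\mathrm{PGL}_2(k)$ (a finite subgroup of order prime to $p$ fixing a point of $\mathbb{P}^1$ meets the unipotent radical, which is $p$-torsion, trivially), hence of order in $\{1,2,3,5\}$; thus every orbit has size $60/|\mathrm{stab}| \in \{12,20,30,60\}$. Any nonempty $A_5$-invariant subset of $\mathbb{P}^1$ therefore has at least $12$ points, contradicting $\# B = 10$. So $G \cong A_4$ or $S_4$.

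Now suppose $p \mid \# G$. By Cauchy's theorem $G$ has an element of order $p$, so the second half of Theorem \ref{thm:RedAut} gives $p \mid 2g+1 = 9$ or $p \mid 2g+2 = 10$; with $p \notin \{2,3\}$ this forces $p = 5$. Since $A_4 \leq G$ gives $12 \mid \# G$ and $\gcd(12,5) = 1$, we conclude $12p = 60 \mid \# G$. Combining the two cases yields the trichotomy $G \cong A_4$, $G \cong S_4$, or $12p \mid \# G$. For the final assertion, if $p \geq 7$ then $p \notin \{2,3,5\}$, so the case $p \mid \# G$ cannot occur and $G \cong A_4$ or $S_4$.

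The main obstacle is the exclusion of $A_5$: the order bound of Lemma \ref{lem:aut2} is by itself insufficient, since all element orders of $A_5$ (namely $1,2,3,5$) already divide one of $8,9,10$. The decisive input is instead the geometric orbit-size bound on $\mathbb{P}^1$, for which one must justify that point stabilizers of a finite order-prime-to-$p$ subgroup of $\mathrm{PGL}_2(k)$ are cyclic and that the minimal nontrivial $A_5$-orbit has size $12 > 2g+2$.
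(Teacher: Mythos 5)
Your proof is correct, and at its decisive step it takes a genuinely different route from the paper's. The shared skeleton is the same: split on whether $p$ divides $\#\overline{\mathrm{Aut}}(H)$; in the coprime case invoke Theorem \ref{thm:RedAut} (i.e.\ Theorem \ref{thm:Faber}) and eliminate the cyclic and dihedral possibilities since they contain no copy of $A_4$; otherwise combine Cauchy's theorem, Lagrange ($12 \mid \#\overline{\mathrm{Aut}}(H)$), $\gcd(12,p)=1$, and the second half of Theorem \ref{thm:RedAut} (which moreover pins $p=5$), exactly as the paper does, including the deduction for $p\geq 7$. The divergence is the exclusion of $A_5$, which does contain $A_4$ and so survives the cyclic/dihedral elimination: the paper's proof of this lemma merely asserts that ``only $A_4$ and $S_4$ are possible,'' and the actual exclusion is carried out separately in Lemma \ref{lem:A5} by normalizing the generators via Theorem \ref{thm:Faber}(5), reducing $f$ to $ax^{10}+bx^5+c$, and checking that a $3\times 3$ linear system on $(a,b,c)$ has determinant $2^2\cdot 3\cdot 5^5(2\cdot 3^2\zeta^3+2\cdot 3^2\zeta^2+29)\neq 0$ for $p\geq 7$ (note that Lemma \ref{lem:A5} is stated only \emph{after} this lemma, though its proof is independent, so there is no circularity on either reading). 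Your replacement is a computation-free geometric argument: point stabilizers of a prime-to-$p$ finite subgroup of $\mathrm{PGL}_2(k)$ are cyclic, since they meet the $p$-torsion unipotent radical of the Borel trivially and hence embed in $k^{\times}$, so $A_5$-orbits on $\mathbb{P}^1$ have size $60/\#(\mathrm{stab})\in\{12,20,30,60\}$ (as $A_5$ has cyclic subgroups only of orders $1,2,3,5$), which is incompatible with the nonempty invariant branch locus of $2g+2=10$ points. This is sound in every detail, it correctly identifies that Lemma \ref{lem:aut2} alone cannot rule out $A_5$, and it actually repairs the unproved assertion in the paper's own proof; what it buys is self-containedness and easy generalization (it excludes $A_5\subset\overline{\mathrm{Aut}}(H)$ whenever $2g+2<12$), while the paper's explicit computation buys the stand-alone nonexistence statement of Lemma \ref{lem:A5}, which is re-used later (e.g.\ in Lemma \ref{lem:C3}).
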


\begin{proof}
If $\overline{\rm Aut}(H)$ has order coprime to $p$, then it is isomorphic to $\{ 0 \}$, $\mathbb{Z}_n$, $D_n$ for $n= 2,3,4,5,8,9,10$, $A_4$, $S_4$ or $A_5$;
among them, only $A_4$ and $S_4$ are possible.
Otherwise, clearly the order of $\overline{\rm Aut}(H)$ is divided by $12 p$.
%Since $\# \mathrm{Aut}(H) \leq 42 (g-1) = 126$, 
It follows from Theorem \ref{thm:RedAut} that we have $\overline{\rm Aut}(H) \cong A_4$ or $S_4$ if $p \geq 7$.
\end{proof}

\begin{lem}\label{lem:A4S4}
Let $H$ be a hyperelliptic curve $H$ of genus $4$ over $k$ of characteristic $p \geq 7$.
If $\overline{\rm Aut}(H) $ contains a subgroup isomorphic to $A_4$, then it is isomorphic to $A_4$, and is birational to 
\begin{equation}\label{eq:A4}
y^2 = x (x^4-1)(x^4 + 2 i \sqrt{3} x^2 + 1).
\end{equation}
Conversely, a hyperelliptic curve given by \eqref{eq:A4} has the reduced automotphism group $A_4$ if $p \geq 5$, where $i$ is an element in $\mathbb{F}_{p^2} \subset k$ satisfying $i^2= -1$.
(Hence, since $S_4 \supset A_4$, there is no hyperelliptic curve $H$ of genus $4$ over $k$ such that $\overline{\rm Aut}(H) \cong S_4$.)
\end{lem}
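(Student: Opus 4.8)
The plan is to use the classification of finite subgroups of $\mathrm{PGL}_2(k)$ to pin down the branch locus of $H$ as a union of $A_4$-orbits on $\mathbb{P}^1$. First I would invoke Lemma~\ref{lem:A4orS4}: since $p \geq 7$ and $\overline{\rm Aut}(H)$ contains $A_4$, we have $\overline{\rm Aut}(H) \cong A_4$ or $S_4$, and in particular $\#\overline{\rm Aut}(H)$ is coprime to $p$. By Theorem~\ref{thm:RedAut} I may then replace $H$ by a $k$-isomorphic curve $H' : y^2 = f'(x)$ for which $\overline{G}_{f'}$ is literally the explicit copy of $A_4 = N \rtimes C$ exhibited in Theorem~\ref{thm:Faber}(3), where $N$ acts on the $x$-line by $x \mapsto \pm x,\ \pm 1/x$ and $C$ is generated by the order-$3$ Möbius map $x \mapsto (x+i)/(x-i)$. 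The key observation is that the $2g+2 = 10$ Weierstrass points of $H'$ (the finite roots of $f'$, together with $\infty$ when $\deg f' = 9$) are permuted by $\overline{\rm Aut}(H')$, hence form a disjoint union of $A_4$-orbits in $\mathbb{P}^1$.

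Next I would compute those orbits. The group $A_4 \subset \mathrm{PGL}_2(k)$ has special orbits of sizes $4$, $4$ and $6$ and generic orbits of size $12$, so the only way to partition $10$ points into $A_4$-orbits is $10 = 4 + 6$. The size-$6$ orbit consists of the fixed points of the three involutions in $N$, namely $\{0, \infty, 1, -1, i, -i\}$, i.e.\ the roots of $x(x^4-1)$ together with $\infty$. Each size-$4$ orbit is the zero set of a palindromic quartic $x^4 + A x^2 + 1$ (automatically $N$-invariant), and imposing invariance under the order-$3$ generator of $C$ forces $A^2 = -12$, i.e.\ $A = \pm 2 i \sqrt{3}$. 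Taking the size-$6$ orbit together with one size-$4$ orbit therefore gives $f'(x) = x(x^4 - 1)(x^4 + 2 i \sqrt{3}\, x^2 + 1)$, which is exactly \eqref{eq:A4}; the two choices of tetrahedron $A = \pm 2i\sqrt{3}$ are interchanged by a normalizing Möbius transformation and hence yield $k$-isomorphic curves.

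To finish the first assertion I would rule out $S_4$. The extra involution generating $S_4$ over $A_4$ swaps the two size-$4$ orbits, so the branch locus, which contains exactly one of them, cannot be $S_4$-invariant; equivalently, $S_4 \subset \mathrm{PGL}_2(k)$ has special orbit sizes $6, 8, 12$ and generic size $24$, and no sub-sum of these equals $10$. Hence $\overline{\rm Aut}(H) = A_4$, and in particular no genus-$4$ hyperelliptic curve has reduced automorphism group $S_4$.

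For the converse I would verify directly that the three generators $x \mapsto -x$, $x \mapsto 1/x$ and $x \mapsto (x+i)/(x-i)$ of the explicit $A_4$ preserve the root set of $f(x) = x(x^4-1)(x^4 + 2i\sqrt{3}\, x^2 + 1)$, hence lift (with suitable $\lambda$) to reduced automorphisms via the description \eqref{eq:RedAut}; separability of $f$ for $p \geq 5$ follows since the six points $0, \pm1, \pm i$ are distinct from the quartic's roots (a root coincidence would force $p = 2$) and the quartic has nonzero discriminant because $A^2 - 4 = -16 \neq 0$ and $A \neq 0$ for $p \neq 2, 3$. Maximality (that $\overline{\rm Aut}(H) = A_4$, not larger) then follows from Lemma~\ref{lem:A4orS4} and the exclusion of $S_4$ above. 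I expect the technical heart of the argument to be the explicit identification of the tetrahedral orbit polynomials $x^4 \pm 2i\sqrt{3}\, x^2 + 1$ through the order-$3$ Möbius map, while the genuine but routine bookkeeping will be tracking the characteristic conditions that guarantee $i$ and $\sqrt{3}$ lie in $k$ and that $\#A_4 = 12$ is prime to $p$.
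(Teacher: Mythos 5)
Your proposal is correct, but it reaches \eqref{eq:A4} by a genuinely different route than the paper. The paper first puts $H$ into one of three normal forms $(x^2+1)g_A(x)g_B(x)$, $x\,g_A(x)g_B(x)$, $x(x^4-1)g_A(x)$ with $g_A(x)=x^4+Ax^2+1$, exploiting only the subgroup $\langle x \mapsto -x,\ x \mapsto 1/x\rangle$ (as in the proof of Lemma \ref{lem:Z2V4}); it then kills the first form and pins down $A = \pm 2i\sqrt{3}$ by comparing coefficients in the functional equation $(x-i)^{10} f\left(\frac{x+i}{x-i}\right) = \lambda^2 f(x)$ (yielding $8iA+16i=\lambda^2$ and $16iA-96i=\lambda^2 A$, hence $A^2=-12$), using the orbit $\{\infty,0,\pm 1,\pm i\}$ of $0$ only to pass from the second form to the third. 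You instead organize everything around the orbit combinatorics of $A_4 \subset \mathrm{PGL}_2(k)$ on the ten Weierstrass points: since $p \nmid 12$ the special orbits have sizes $4,4,6$ (generic $12$), so $10 = 4+6$ is forced, the $6$-orbit is the fixed locus of the involutions of $N$, and each $4$-orbit --- which avoids $\{0,\infty\}$ by disjointness from the $6$-orbit, so its vanishing polynomial is indeed a palindromic even quartic, a small point you should state --- satisfies $A^2=-12$ by $\tau$-invariance; that last computation is exactly the paper's coefficient comparison, so the computational core coincides. Your framing buys a cleaner exclusion of $S_4$ (no sub-sum of the $S_4$-orbit sizes $6,8,12,24$ equals $10$; equivalently the normalizing element swaps the two tetrahedra, which is how the paper phrases it via $\rho: x \mapsto ix$ carrying $A$ to $-A$) and dispenses with the three-way case analysis; the paper's route buys that it never needs to verify that the characteristic-$0$ orbit structure of $A_4$ on $\mathbb{P}^1$ persists in characteristic $p$, every step being a direct polynomial identity.

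One genuine caveat concerns the converse at $p=5$: your appeal to Lemma \ref{lem:A4orS4} does not close the maximality argument there, since that lemma yields $\overline{\mathrm{Aut}}(H)\cong A_4$ or $S_4$ only for $p \geq 7$; at $p=5$ (where $p \mid 2g+2$, cf.\ Theorem \ref{thm:RedAut}) an overgroup of order divisible by $12p$ is not excluded, and your tame orbit counting does not apply to such wild subgroups. The paper settles $p=5$ by a Magma computation; you would need to do the same, or extend the orbit analysis to the $p$-irregular subgroups in Faber's classification.
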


\begin{proof}
Lemma \ref{lem:A4orS4} implies that $\overline{\rm Aut}(H) \cong A_4$ or $S_4$.
Also by Theorem \ref{thm:Faber}, we may assume that $\overline{\rm Aut}(H)$ is generated by the two automorphisms $\sigma : x \mapsto -x$ and $\tau : x \mapsto \frac{x+i}{x-i}$ if $\overline{\rm Aut}(H) \cong A_4$, and $\sigma$, $\tau$ and $\rho : x \mapsto i x$ if $\overline{\rm Aut}(H) \cong S_4$, where $i$ is an element in $\mathbb{F}_{p^2}$ satisfying $i^2= -1$.
In any case, $H$ has automorphisms $\sigma$ and $x \mapsto 1/x$.
Thus, similarly to the proof of Lemma \ref{lem:Z2V4}, we may also suppose that $H$ is isomorphic to $y^2=f(x)$, where $f(x)$ is either of the following forms: 
\begin{itemize}
    \item $f(x) = (x^2+1)(x^4+Ax^2 + 1)(x^4+Bx^2+1)= x^{10} + a x^{8} + b x^6 + b x^4 + a x^2 + 1$,
    \item $f(x) = x (x^4+Ax^2 + 1)(x^4+Bx^2+1) =  x^{9} + a x^{7} + b x^5 + a x^3 +x$,
    \item $f(x) = x (x^4 - 1) (x^4 + A x^2 + 1) = x^9 + A x^7 - A x^3 - x$.
\end{itemize}
We claim that the first case is not possible.
Indeed, since $H$ has an automorphism $\tau$, we also have 
\begin{equation}\label{eq:A4proof}
    (x - i)^{10} f\left( \frac{x +i}{x - i} \right) =  \lambda^2 f(x)
\end{equation}
for some $\lambda \in k^{\times}$.
In the first case, the coefficient of $x^{10}$ and the constant term in the left-hand side are $2+ 2 a + 2b$ and $-(2+ 2 a + 2b)$, while those in the right-hand side are both $\lambda^2$.
This means that $\lambda = 0$, a contradtiction.
%Also in the case (1-b), it is straightforward that the left-hand side has no $x^{10}$-term, but the right-hand side has degree $10$.
%Thus only the case (2) is possible.

In the second and third cases, $H$ has $0 \in k$ as a ramification point, and one can check (cf.\ \cite[Remark 4.3]{Shaska04}) that the orbit of $0$ by the action of $\overline{\rm Aut}(H)$ is $\{ \infty, 0, \pm 1, \pm i\}$.
Therefore, $f(x)$ is divided by $x^4-1$, and thus this is the third case.
% Moreover, any ramification point $\alpha \in k$ of $H$ can be sent to $-\alpha, \pm \frac{1}{\alpha}$ by elements in $\overline{\rm Aut}(H)$, so that
% \[
% x^4 + s x^2 + t = (x^2 - \alpha^2) \left( x^2 - \frac{1}{\alpha^2} \right) = x^4 + k x^2 + 1,
% \]
% where we set $k = \alpha^2 + \frac{1}{\alpha^2}$.
Comparing the coefficients of $x^{9}$ (resp.\ $x^7$) in the both sides of \eqref{eq:A4proof}, one has the equation $8 i A  + 16 i =  \lambda^2$ (resp.\ $16 i A - 96 i = \lambda^2 A$), and therefore $A = \pm 2 i \sqrt{3}$ by $p \neq 3$.
Note that $y^2 = x (x^4 - 1) (x^4 + A x^2 + 1)$ and $y^2 = x (x^4 - 1) (x^4 - A x^2 + 1)$ are isomorphic via $x \mapsto i x$, where $\rho \notin \overline{\rm Aut}(H)$, so that $\overline{\mathrm{Aut}}$ is not isomorphic to $S_4$, as desired.

For the converse, it is straightforward to see that the reduced automorphism group of \eqref{eq:A4} surely has a subgroup isomorphic to $A_4$, and thus it is isomorphic to $A_4$ as proved as above.
The case $p=5$ is proved with Magma's built-in function ``AutomorphismGroupOfHyperellipticCurve''.
\end{proof}

\begin{lem}\label{lem:A5}
If $p\geq 7$, there is no hyperelliptic curve $H$ of genus $4$ over $k$ such that $\overline{\mathrm{Aut}}(H) \cong A_5$.
\end{lem}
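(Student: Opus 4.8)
The plan is to reach a contradiction from the inclusion $A_4 \subset A_5$ together with the classification result just established. Suppose, for contradiction, that $\overline{\mathrm{Aut}}(H) \cong A_5$ for some hyperelliptic curve $H$ of genus $4$ over $k$ with $p \geq 7$. The key observation is that $A_5$ contains a subgroup isomorphic to $A_4$ (for instance the stabilizer of a letter under the natural action of $A_5$ on five symbols). Consequently $\overline{\mathrm{Aut}}(H)$ contains a subgroup isomorphic to $A_4$, and since $p \geq 7$, Lemma \ref{lem:A4S4} applies and forces $\overline{\mathrm{Aut}}(H) \cong A_4$. But then $\# \overline{\mathrm{Aut}}(H) = 60$ and $\# \overline{\mathrm{Aut}}(H) = 12$ simultaneously, which is absurd. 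This already completes the proof, so that the genuine content is off-loaded onto Lemma \ref{lem:A4S4}, which does the real work of pinning down any curve whose reduced automorphism group contains $A_4$.

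For a self-contained alternative that does not invoke Lemma \ref{lem:A4S4}, I would argue directly with orbit counting on $\mathbb{P}^1$. Since $p \geq 7$ does not divide $\# A_5 = 60$, Theorem \ref{thm:Faber} realizes $A_5 \subset \mathrm{PGL}_2(k)$ as the standard icosahedral subgroup, and every point stabilizer $G_P = A_5 \cap \mathrm{Stab}(P)$ is a finite subgroup of a Borel of order prime to $p$, hence cyclic; the cyclic subgroups of $A_5$ have order $1,2,3$ or $5$. By orbit--stabilizer, the orbits of $A_5$ on $\mathbb{P}^1$ therefore have sizes among $\{12,20,30,60\}$. On the other hand, the $2g+2 = 10$ branch points of the double cover $H \to \mathbb{P}^1 \cong H/\langle \iota \rangle$ form an $\overline{\mathrm{Aut}}(H)$-invariant set, hence a union of $A_5$-orbits. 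Since every such orbit has size at least $12 > 10$, no such invariant set of cardinality $10$ can exist, a contradiction.

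I expect no substantial obstacle in either route: the first is essentially immediate once Lemma \ref{lem:A4S4} is available, and the second is an elementary orbit count. The only point requiring mild care is the claim that point stabilizers are cyclic (so that the minimal orbit size is exactly $60/5 = 12$); this rests on the fact that a finite subgroup of $\mathrm{PGL}_2(k)$ of order prime to $p$ fixing a point of $\mathbb{P}^1$ is diagonalizable, hence cyclic, which is precisely the setting of Theorems \ref{thm:Faber} and \ref{thm:RedAut}. I would present the argument via Lemma \ref{lem:A4S4} as the main proof for brevity and consistency with the preceding lemmas, possibly remarking on the orbit-counting argument as an independent verification.
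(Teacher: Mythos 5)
Your first route is circular within this paper and cannot stand as the proof. Lemma \ref{lem:A4S4} is not an independent input here: its proof begins by invoking Lemma \ref{lem:A4orS4}, and the proof of \emph{that} lemma asserts that among the candidate groups $\{0\}$, $\mathbb{Z}_n$, $D_n$, $A_4$, $S_4$, $A_5$, ``only $A_4$ and $S_4$ are possible'' for a reduced automorphism group containing $A_4$. But, as you yourself observe, $A_5$ contains $A_4$ (as a point stabilizer in the action on five letters), so excluding $A_5$ at that step is exactly the content of Lemma \ref{lem:A5}; the paper's logic is only consistent because its own proof of Lemma \ref{lem:A5} is independent of Lemmas \ref{lem:A4orS4} and \ref{lem:A4S4}. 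That proof is a direct computation: using Theorems \ref{thm:Faber} and \ref{thm:RedAut} to realize the icosahedral group by the explicit generators $x \mapsto \zeta x$ and $x \mapsto \frac{x+(1-\zeta-\zeta^{-1})}{x-1}$, it deduces from the order-$5$ symmetry that $f(x) = ax^{10}+bx^5+c$, then compares the $x^9$-, $x^8$- and $x^7$-coefficients under the second generator to obtain a homogeneous $3\times 3$ linear system whose determinant $2^2\cdot 3\cdot 5^5\,(2\cdot 3^2\zeta^3+2\cdot 3^2\zeta^2+29)$ is nonzero for $p\geq 7$, forcing $f=0$, a contradiction. Deriving Lemma \ref{lem:A5} from Lemma \ref{lem:A4S4} therefore begs the question, so you should not present that argument as the main proof.

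Your second route, by contrast, is correct, self-contained, and genuinely different from the paper's computation, and it should be promoted to the actual proof. Since $p\geq 7$ implies $p \nmid 60$, the point stabilizers in $\mathrm{PGL}_2(k)$ of order prime to $p$ are cyclic (a finite subgroup of the Borel of order prime to $p$ meets the unipotent radical trivially and embeds in the torus), and the cyclic subgroups of $A_5$ have order $1,2,3,5$, so every $A_5$-orbit on $\mathbb{P}^1$ has size in $\{12,20,30,60\}$; meanwhile the $2g+2=10$ branch points of $H \to \mathbb{P}^1$ (counting $\infty$ when $\deg f = 9$) form a nonempty invariant set of size $10 < 12$, which is impossible. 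You correctly flag the one delicate point, cyclicity of the stabilizers: this is essential, since $A_5$ has non-cyclic subgroups such as $S_3$ of order $6$, which would otherwise allow an orbit of size exactly $10$. What this route buys over the paper's: it avoids all coefficient computation, it explains conceptually why the obstruction is $10 < 12$ (so the same argument shows $A_5$ can only occur when $2g+2$ is a sum of icosahedral orbit sizes, e.g.\ $g=5$ with the $12$ vertices), and, as a bonus, it supplies exactly the missing justification for the exclusion of $A_5$ in the proof of Lemma \ref{lem:A4orS4}, repairing the paper's forward dependence rather than aggravating it.
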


\begin{proof}
Suppose for a contradiction that there exists a hyperelliptic curve $H$ of genus $4$ over $k$ with $\overline{\mathrm{Aut}}(H) \cong A_5$.
By Theorems \ref{thm:Faber} and \ref{thm:RedAut}, we may assume that $\overline{\mathrm{Aut}}(H)$ is generated by $(x,y) \mapsto (\zeta x, \lambda y) $ and $(x, y) \mapsto \left( \frac{x + (1-\zeta - \zeta^{-1})}{x - 1}, \mu y \right)$ for some $\lambda, \mu \in k^{\times}$, where $\zeta$ is a primitive $5$-th root of unity in $k^{\times}$.
It follows from $f (\zeta x) = \lambda^2 f(x)$ that $f(x) = a x^{10} + b x^5 + c$ and $\lambda^2 = 1$ for some $a,b,c \in k$.
By comparing the coefficients of $x^9$, $x^8$ and $x^7$ in the both sides of
\[
(x-1)^{10} f \left( \frac{x + (1-\zeta - \zeta^{-1})}{x - 1} \right) =  \mu^2 f(x),
\]
we have the following linear system:
\begin{equation}
\begin{pmatrix}
10 \zeta^3 + 10 \zeta^2 + 20  &  5 \zeta^3 + 5 \zeta^2 + 5  &  -10  \\
135 \zeta^3 + 135 \zeta^2 + 225 & 5 \zeta^3 + 5 \zeta^2 + 10  &  45 \\
960 \zeta^3 + 960 \zeta^2 + 1560 & -20 \zeta^3 -20 \zeta^2 -30  &  - 120 
\end{pmatrix}
\begin{pmatrix}
a \\
b \\
c
\end{pmatrix}
=
\begin{pmatrix}
0 \\
0 \\
0
\end{pmatrix}.
\end{equation}
The determinant of the coefficient matrix is computed as $2^2 \cdot 3 \cdot 5^5 (2 \cdot 3^2 \zeta^3 + 2 \cdot 3^2 \zeta^2 + 29) \neq 0$, which is not equal to zero by $p \geq 7$.
Thus $(a,b,c) = (0,0,0)$, which is a contradiction.
\end{proof}

Here, let us investigate the cases $\mathbb{Z}_n$ and $D_n$ with $n = 3$ and $9$.

\begin{lem}\label{lem:noD3D9}
We have the followng:
\begin{enumerate}
    \item[(1)] There is no hyperelliptic curve $H$ of genus $4$ over $k$ such that $\overline{\mathrm{Aut}}(H) \cong D_3,D_9$.
    \item[(2)] For $p \geq 5$, a hyperelliptic curve of genus $4$ over $k$ defined by $H:y^2 = x^{10}+x$ has the reduced automorphism group $\mathbb{Z}_9$.
\end{enumerate}

\end{lem}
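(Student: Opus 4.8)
The plan is to reduce both statements, via Proposition~\ref{prop:aut} and Lemma~\ref{lem:aut2}, to an explicit normal form, and then to exploit that an order-$3$ or order-$9$ reduced automorphism, once diagonalised, fixes exactly $0$ and $\infty$ on $\mathbb{P}^1$.

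For part (1), suppose $\overline{\mathrm{Aut}}(H)\cong D_3$ (resp.\ $D_9$). Its cyclic part furnishes an element $\sigma$ of order $3$ (resp.\ $9$) in $\overline{\mathrm{Aut}}(H)$, and since $3$ (resp.\ $9$) divides $2g+1=9$ but neither $2g=8$ nor $2g+2=10$, Lemma~\ref{lem:aut2}(2) lets me replace $H$ by a $k$-isomorphic model $y^2=x^{10}+ax^7+bx^4+x$ (resp.\ $y^2=x^{10}+x$) on which $\sigma$ is represented by $\mathrm{diag}(\zeta,1)$ for a primitive root of unity $\zeta$. As a M\"{o}bius map, $\sigma\colon x\mapsto \zeta x$ has fixed-point set $\{0,\infty\}$, which coincides with that of $\sigma^{-1}$. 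A dihedral relation forces an involution $\tau\in\overline{\mathrm{Aut}}(H)$ with $\tau\sigma\tau^{-1}=\sigma^{-1}$; hence $\tau$ permutes $\{0,\infty\}$, and it cannot fix both points, for then $\tau$ would be diagonal and commute with $\sigma$, contradicting $\mathrm{ord}(\sigma)\geq 3$. Therefore $\tau(0)=\infty$. But $\deg f=2g+2=10$, so the branch locus of $H$ is the set of the $10$ finite roots of $f$; it contains $0$ (since $f(0)=0$) and excludes $\infty$. As every element of $\overline{\mathrm{Aut}}(H)=\overline{G}_f$ preserves this locus, $\tau(0)=\infty$ is impossible, and both $D_3$ and $D_9$ are excluded.

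For part (2), I would first note that $\mathrm{diag}(\zeta_9,1)\in\overline{G}_f$ for $f=x^{10}+x$ and a primitive ninth root of unity $\zeta_9\in k$, since $f(\zeta_9 x)=\zeta_9 f(x)$, which equals $\lambda^2 f(x)$ upon choosing $\lambda^2=\zeta_9$; thus $\mathbb{Z}_9\subseteq\overline{\mathrm{Aut}}(H)$. It then suffices to show no strictly larger group occurs. If $p\nmid\#\overline{\mathrm{Aut}}(H)$, Theorem~\ref{thm:RedAut} restricts $\overline{\mathrm{Aut}}(H)$ to $\mathbb{Z}_n$, $D_n$, $A_4$, $S_4$ or $A_5$, while Lemma~\ref{lem:aut2} forces every element order to divide $8$, $9$ or $10$. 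None of $A_4,S_4,A_5$ has an element of order $9$, and the only groups of the first two shapes admitting an order-$9$ element subject to that divisibility constraint are $\mathbb{Z}_9$ and $D_9$; the latter is ruled out by part (1). Hence $\overline{\mathrm{Aut}}(H)=\mathbb{Z}_9$ in this case.

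The remaining possibility $p\mid\#\overline{\mathrm{Aut}}(H)$ is the crux, and I expect it to be the main obstacle: by the second half of Theorem~\ref{thm:RedAut} it requires $p\mid 2g+1=9$ or $p\mid 2g+2=10$, which for $p\geq 7$ never holds, but for $p=5$ the divisibility $5\mid 10$ leaves room for a wild (unipotent) order-$5$ automorphism. To dispose of $p=5$ I would rule out any order-$5$ element $u$ of $\overline{\mathrm{Aut}}(H)$ directly by analysing its unique fixed point $P_0\in\mathbb{P}^1$; since $10-1$ is not divisible by $5$, $P_0$ cannot lie in the $10$-point branch locus. If $P_0=\infty$, then $u\colon x\mapsto x+c$ with $c\neq 0$, and $(x+c)^{10}=(x^5+c^5)^2$ gives $f(x+c)=x^{10}+2c^5x^5+x+(c^{10}+c)$, which equals $f$ only when $c=0$. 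If $P_0=q$ is finite, then $u$ is conjugate to a translation via $x\mapsto 1/(x-q)$, so the images $1/(b-q)$ of the branch points $b$ form translation orbits and hence sum to $0$; a logarithmic-derivative computation with $P(X)=X^9+1$ yields $\sum_{b}1/(b-q)=-1/\bigl(q(q^9+1)\bigr)\neq 0$, a contradiction. Thus no order-$5$ automorphism exists, $5\nmid\#\overline{\mathrm{Aut}}(H)$ by Cauchy's theorem, and the previous paragraph again gives $\overline{\mathrm{Aut}}(H)\cong\mathbb{Z}_9$; alternatively, the single prime $p=5$ can be confirmed with Magma as in the proof of Lemma~\ref{lem:A4S4}.
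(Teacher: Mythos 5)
Your proposal is correct, and it diverges from the paper's proof in two substantive ways. For part (1), the paper conjugates the whole dihedral subgroup at once into the standard form of Theorem~\ref{thm:Faber} (it suffices to exclude $D_3$, since $D_9\supset D_3$), reads off $f(x)=ax^{10}+bx^7+cx^4+dx$ from the rotation $\mathrm{diag}(\zeta,1)$, and then derives a contradiction from the requirement that $x\mapsto 1/x$ also lie in $\overline{G}_f$. You instead normalize only the rotation via Proposition~\ref{prop:aut}/Lemma~\ref{lem:aut2} and obtain the contradiction abstractly: the dihedral relation $\tau\sigma\tau^{-1}=\sigma^{-1}$ forces the reflection to swap the fixed-point set $\{0,\infty\}$ of the diagonalised $\sigma$, while invariance of the $10$-point branch locus (which contains $0$ but not $\infty$, as $\deg f=2g+2$) forbids this; your commutation argument correctly rules out $\tau$ fixing both points. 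This is morally the same contradiction as the paper's, but it avoids the simultaneous normalization of the involution, which makes the argument slightly more robust and self-contained. For part (2), the group-theoretic reduction (element orders divide $8$, $9$ or $10$; $A_4$, $S_4$, $A_5$ have no order-$9$ element; $D_9$ excluded by part (1)) is identical to the paper's, but the treatment of $p=5$ is genuinely different: the paper simply verifies this case with Magma's built-in function, whereas you eliminate a wild order-$5$ element theoretically. Your two-case analysis is sound: Proposition~\ref{prop:Faber} makes such an element unipotent with a unique fixed point $P_0$, which cannot be a branch point since $5\nmid 9$; the case $P_0=\infty$ dies on $(x+c)^{10}=(x^5+c^5)^2$, which forces $2c^5=0$ in the $x^5$-coefficient; and the finite case dies on the orbit-sum identity $\sum_b 1/(b-q)=0$ for translation orbits in characteristic $5$, against $-f'(q)/f(q)=-1/\bigl(q(q^9+1)\bigr)\neq 0$ since $f'\equiv 1 \bmod 5$ and $q$ is not a root of $f$. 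Combined with Cauchy's theorem this upgrades the paper's machine check to a proof, which is a real (if small) gain; for $p\geq 7$ both you and the paper dispose of wild automorphisms via the second half of Theorem~\ref{thm:RedAut}, since no prime $\geq 7$ divides $9$ or $10$.
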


\begin{proof}
\begin{enumerate}
    \item Let $H$ be a a hyperelliptic curve $H$ of genus $4$ over $k$.
    It suffices to show that $\overline{\mathrm{Aut}}(H)$ has no subgroup isomorphic to $D_3$, since $D_9$ contains $D_3$ as a subgroup.
    Suppose for a contradiction that $\overline{\mathrm{Aut}}(H) \supset D_3$.
    By Theorems \ref{thm:Faber} and \ref{thm:RedAut}, we may assume that $\overline{\mathrm{Aut}}(H)$ has a subgroup generated by $(x,y) \mapsto (\zeta x, \lambda y) $ and $(x, y) \mapsto \left( \frac{1}{x}, \mu y \right)$ for some $\lambda, \mu \in k^{\times}$, where $\zeta$ is a primitive $3$-rd root of unity in $k^{\times}$.
    It follows from $f (\zeta x) = \lambda^2 f(x)$ that we have $f(x) = a x^{10} + b x^7 + c x^4 + d x$ for some $a,b,c \in k$, but this contradicts that $(x, y) \mapsto \left( \frac{1}{x}, \mu y \right)$ is an automorphism of $H$.
    \item Clearly, $\overline{\rm Aut}(H) $ has an element $(x,y) \mapsto (\zeta x, \sqrt{\zeta} y) $ of order $9$, where $\zeta$ is a primitive $9$-th root of unity in $k$, and therefore $\overline{\rm Aut}(H) \supset \mathbb{Z}_9$. 
    If $p \geq 7$, it follows from Theorem \ref{thm:RedAut} that $\overline{\mathrm{Aut}}(H)$ is either of the five types in the theorem, but clearly $A_4$, $S_4$ and $A_5$ are impossible.
    As for the cases $\mathbb{Z}_n$ and $D_n$, only $\overline{\mathrm{Aut}}(H) \cong D_9$ or $\overline{\mathrm{Aut}}(H) \cong \mathbb{Z}_9$ is possible since the order of any element in $\overline{\mathrm{Aut}}(H)$ divides $8$, $9$ or $10$.
    By the assertion (1), we have $\overline{\mathrm{Aut}}(H) \cong \mathbb{Z}_9$, as desired.
    The case $p=5$ is proved with Magma's built-in function ``AutomorphismGroupOfHyperellipticCurve''.
\end{enumerate}
\end{proof}

\begin{lem}\label{lem:C3}
Let $H$ be a hyperelliptic curve of genus $4$ over $k$ of characteristic $p \geq 7$.
If the reduced automorphism group contains a subgroup isomorphic to $\mathbb{Z}_3$, then it is isomorphic to $\mathbb{Z}_3$, $\mathbb{Z}_9$ or $A_4$.
Moreover, in each case, $H$ is isomorphic to $y^2 = f(x)$ given as follows:
\begin{itemize}
    \item $(\overline{\rm Aut}(H)) \cong \mathbb{Z}_3)$ $y^2 = x^{10} + a x^7 + b x^4 + x$ for some $a, b \in k$ with $(a,b) \neq (0,0)$.
    \item $(\overline{\rm Aut}(H)) \cong \mathbb{Z}_9)$ $y^2 = x^{10} + x$.
    \item $(\overline{\rm Aut}(H)) \cong A_4)$ $y^2 = x (x^4-1)(x^4 + 2 i \sqrt{3} x^2 + 1)$ as in \eqref{eq:A4}.
\end{itemize}
\end{lem}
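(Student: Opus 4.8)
The plan is to combine the classification of finite subgroups of $\mathrm{PGL}_2(k)$ recorded in Theorem \ref{thm:RedAut} with the order constraints of Lemma \ref{lem:aut2}, and then read off the equations from the auxiliary lemmas already proved. First I would rule out wild ramification: since $p \geq 7$, an element of $\overline{\mathrm{Aut}}(H)$ of order divisible by $p$ would, by Theorem \ref{thm:RedAut}, force $p \mid 2g+1 = 9$ or $p \mid 2g+2 = 10$, which is impossible for a prime $p \geq 7$. Hence $\overline{\mathrm{Aut}}(H)$ has no element of order $p$, so by Cauchy's theorem $\#\overline{\mathrm{Aut}}(H)$ is coprime to $p$, and Theorem \ref{thm:RedAut} applies to $\overline{\mathrm{Aut}}(H)$ itself: it is conjugate to one of $\mathbb{Z}_n$, $D_n$, $A_4$, $S_4$, or $A_5$.

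Next I would prune this list using $\mathbb{Z}_3 \subseteq \overline{\mathrm{Aut}}(H)$ together with Lemma \ref{lem:aut2}, which for $g = 4$ forces every nontrivial element to have order dividing $8$, $9$, or $10$. If $\overline{\mathrm{Aut}}(H) \cong \mathbb{Z}_n$, then $3 \mid n$ from the hypothesis while $n \mid 8$, $n \mid 9$, or $n \mid 10$ from the order bound applied to a generator, so $n \in \{3, 9\}$. The same two constraints show that a dihedral $\overline{\mathrm{Aut}}(H) \cong D_n$ must be $D_3$ or $D_9$, both ruled out by Lemma \ref{lem:noD3D9}(1); the cases $S_4$ and $A_5$ are ruled out by Lemmas \ref{lem:A4S4} and \ref{lem:A5}. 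This leaves precisely $\overline{\mathrm{Aut}}(H) \cong \mathbb{Z}_3$, $\mathbb{Z}_9$, or $A_4$, establishing the first assertion.

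It then remains to exhibit the defining equation in each case. For $\overline{\mathrm{Aut}}(H) \cong A_4$, Lemma \ref{lem:A4S4} gives directly that $H$ is birational to \eqref{eq:A4}. For $\overline{\mathrm{Aut}}(H) \cong \mathbb{Z}_9$, I would apply Lemma \ref{lem:aut2}(2) to an order-$9$ element, using $9 = 2g+1$ so that $n = 1$, obtaining $f(x) = x(x^9 + 1) = x^{10} + x$. For $\overline{\mathrm{Aut}}(H) \cong \mathbb{Z}_3$, I would apply Lemma \ref{lem:aut2}(2) to an order-$3$ element, using $3 \mid 9 = 2g+1$ so that $n = 3$, obtaining $f(x) = x(x^9 + a x^6 + b x^3 + 1) = x^{10} + a x^7 + b x^4 + x$ for some $a, b \in k$.

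The single subtle point is the boundary condition $(a,b) \neq (0,0)$ in the $\mathbb{Z}_3$ case, which I would settle by contradiction: if $(a,b) = (0,0)$ then $H$ is $y^2 = x^{10} + x$, whose reduced automorphism group is $\mathbb{Z}_9$ by Lemma \ref{lem:noD3D9}(2), contradicting $\overline{\mathrm{Aut}}(H) \cong \mathbb{Z}_3$. I expect the only real work to be the second-paragraph elimination of the dihedral, $S_4$, and $A_5$ possibilities, all of which is delegated to the previously proved lemmas; once those are invoked, no genuinely difficult step remains.
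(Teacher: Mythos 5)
Your proposal is correct and follows essentially the same route as the paper's proof: both invoke Theorem \ref{thm:RedAut}, prune the candidate list via the order-divides-$8$, $9$ or $10$ constraint from Lemma \ref{lem:aut2}, eliminate $D_3$, $D_9$, $S_4$ and $A_5$ by Lemmas \ref{lem:noD3D9}, \ref{lem:A4S4} and \ref{lem:A5}, and read off the equations from Lemmas \ref{lem:aut2} and \ref{lem:A4S4}. You additionally make explicit two points the paper leaves implicit --- the exclusion of group orders divisible by $p$ via the last assertion of Theorem \ref{thm:RedAut} (since $p \geq 7$ cannot divide $9$ or $10$), and the verification of $(a,b) \neq (0,0)$ in the $\mathbb{Z}_3$ case via Lemma \ref{lem:noD3D9}(2) --- both of which check out.
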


\begin{proof}
By Theorem \ref{thm:RedAut}, $\overline{\mathrm{Aut}}(H)$ is either of the 5 types in the theorem, but $S_4$ and $A_5$ are impossible by Lemmas \ref{lem:A4S4} and \ref{lem:A5}.
As $\mathbb{Z}_n$ and $D_n$, since the order of any element in $\overline{\mathrm{Aut}}(H)$ divides $8$, $9$ or $10$, one has that $\overline{\mathrm{Aut}}(H)$ is isomorphic to $\mathbb{Z}_3$, $\mathbb{Z}_9$, $D_3$ or $D_9$, but $D_3$ and $D_9$ do not occur by Lemma \ref{lem:noD3D9} (1).
The statements on equations hold by Lemmas \ref{lem:aut2} and \ref{lem:A4S4}.
\end{proof}

As for the remaining cases $\mathbb{Z}_n$ and $D_n$ with $n = 4$, $5$, $8$ and $10$, it follows from Lemmas \ref{lem:D2g} and \ref{lem:Dg} that we have the following:

\begin{lem}\label{lem:D45810}
Assume that $p \geq 7$.
Then there is no hyperelliptic curve of genus $4$ over $k$ satisfying $\overline{\mathrm{Aut}}(H) \cong \mathbb{Z}_n$ with $n=4$, $5$, $8$ or $10$.
Moreover, we have the following:
\begin{enumerate}
    \item[(1)] If $\overline{\mathrm{Aut}}(H) \cong {D}_{4}$ (resp.\ $D_8$), then $H$ is isomorphic to $y^2 = x^{9} + a x^{5} + x$ (resp.\ $y^2 = x^9 + x$) for some $a \in k^{\times}$.
   \item[(2)] If $\overline{\mathrm{Aut}}(H) \cong {D}_{5}$ (resp.\ $D_{10}$), then $H$ is isomorphic to $y^2 = x^{10} + a x^{5} + 1$ (resp.\ $y^2 = x^{10} + 1$) for some $a \in k^{\times}$.
\end{enumerate}
\end{lem}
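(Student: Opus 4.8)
The plan is to derive the entire statement from the two structural lemmas already established, namely Lemma \ref{lem:D2g} and Lemma \ref{lem:Dg}, specialized to $g = 4$. First I would record that for $g = 4$ one has $2g = 8$, $2g+1 = 9$, $2g+2 = 10$, so that the hypotheses ``$p \nmid 2g, 2g+1, 2g+2$'' of Lemma \ref{lem:D2g} and ``$p \nmid g, g+1$'' of Lemma \ref{lem:Dg} both reduce to $p \notin \{2,3,5\}$, which is guaranteed by $p \geq 7$. The key observation driving every case is that the dihedral group $D_n$ (of order $2n$ in our notation) contains a cyclic subgroup isomorphic to $\mathbb{Z}_n$; hence whenever $\overline{\mathrm{Aut}}(H)$ is isomorphic to, or contains, $D_n$, it contains a subgroup isomorphic to $\mathbb{Z}_n$, which is exactly the input required by the converse directions of the two lemmas.

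For the non-existence of $\overline{\mathrm{Aut}}(H) \cong \mathbb{Z}_n$ I would split the four values of $n$ according to which lemma applies. For $n = 4$ and $n = 5$, an isomorphism $\overline{\mathrm{Aut}}(H) \cong \mathbb{Z}_n$ in particular gives $\overline{\mathrm{Aut}}(H) \supset \mathbb{Z}_n$, so by Lemma \ref{lem:Dg} the curve is birational to $y^2 = x^9 + a x^5 + x$ (for $n=4$) or $y^2 = x^{10} + a x^5 + 1$ (for $n=5$), whose reduced automorphism group contains $D_4$ (resp.\ $D_5$); since $\# D_4 = 8 > 4$ and $\# D_5 = 10 > 5$, this contradicts $\overline{\mathrm{Aut}}(H) \cong \mathbb{Z}_n$. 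For $n = 8$ and $n = 10$, the same reasoning via Lemma \ref{lem:D2g} makes $H$ birational to $y^2 = x^9 + x$ (resp.\ $y^2 = x^{10}+1$), whose reduced automorphism group is $D_8$ (resp.\ $D_{10}$), again not isomorphic to $\mathbb{Z}_8$ (resp.\ $\mathbb{Z}_{10}$).

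For the dihedral cases I would argue directly, using that a non-singular projective curve birational to a given model is isomorphic to it. If $\overline{\mathrm{Aut}}(H) \cong D_8$ (resp.\ $D_{10}$), the order-$8$ (resp.\ order-$10$) element yields $\overline{\mathrm{Aut}}(H) \supset \mathbb{Z}_8$ (resp.\ $\mathbb{Z}_{10}$), and the converse part of Lemma \ref{lem:D2g} forces $H \cong y^2 = x^9 + x$ (resp.\ $y^2 = x^{10}+1$); since there is no parameter, these cases are immediate. If $\overline{\mathrm{Aut}}(H) \cong D_4$ (resp.\ $D_5$), the order-$4$ (resp.\ order-$5$) element yields $\overline{\mathrm{Aut}}(H) \supset \mathbb{Z}_4$ (resp.\ $\mathbb{Z}_5$), and Lemma \ref{lem:Dg} makes $H \cong y^2 = x^9 + a x^5 + x$ (resp.\ $y^2 = x^{10} + a x^5 + 1$) for some $a \in k$. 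It then remains to exclude $a = 0$: setting $a = 0$ returns exactly $y^2 = x^9 + x$ (resp.\ $y^2 = x^{10}+1$), whose reduced automorphism group is $D_8$ (resp.\ $D_{10}$) by Lemma \ref{lem:D2g}, contradicting $\overline{\mathrm{Aut}}(H) \cong D_4$ (resp.\ $D_5$). Hence $a \in k^{\times}$, as asserted.

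The whole argument is a bookkeeping exercise on top of the two lemmas, so there is no deep obstacle; the only point requiring genuine care is the separation of $D_4$ from $D_8$ and of $D_5$ from $D_{10}$, i.e.\ the verification that the degenerate value $a = 0$ strictly enlarges the reduced automorphism group. This is precisely the content already extracted in Lemma \ref{lem:D2g}, so I would cite it rather than recompute the automorphisms; the remaining effort is merely to confirm that the hypothesis $p \geq 7$ indeed rules out the excluded residues $p \in \{2,3,5\}$ in every invocation of Lemmas \ref{lem:D2g} and \ref{lem:Dg}.
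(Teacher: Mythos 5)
Your proposal is correct and follows the same route as the paper, which derives this lemma directly from Lemmas \ref{lem:D2g} and \ref{lem:Dg} (the paper leaves the case-by-case bookkeeping implicit, and you have filled it in accurately, including the key point that $a=0$ degenerates $D_4$ to $D_8$ and $D_5$ to $D_{10}$). The only trivial imprecision is that the hypothesis of Lemma \ref{lem:Dg} for $g=4$ excludes $p\in\{2,5\}$ rather than $\{2,3,5\}$, which is harmless since $p\geq 7$ covers both invocations.
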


%The remaining cases are the cases of $\mathbb{Z}_2$ and $D_2$:\\

We finally prove Theorem \ref{thm:app}.\\

\noindent {\it Proof of Theorem \ref{thm:app}.}
As it was noted at the beginning of this section, it follows from Lemma \ref{lem:aut2} together with Theorem \ref{thm:RedAut} that $\overline{\mathrm{Aut}}(H) $ is isomorphic to one of the following:
$\{ 0 \}$, $\mathbb{Z}_n$, $D_n$ for $n= 2,3,4,5,8,9,10$, $A_4$, $S_4$ or $A_5$.
By Lemmas \ref{lem:A4S4} -- \ref{lem:D45810}, the possible cases are $\mathbb{Z}_n$ for $n= 1, 2, 3, 9$, $D_n$ for $n=2,4,5,8,10$, or $A_4$, and the corresponding equations are those provided in Table \ref{table:aut}.
For each case, the finite group isomorphic to $\mathrm{Aut}(H)$ is determined by straightforward computation together with Lemma \ref{lem:Z2V4}, based on $\overline{\mathrm{Aut}}(H) = \mathrm{Aut}(H)/ \langle \iota_H \rangle $, where $H$ is the hyperelliptic involution.
The containment relationships in Figure \ref{fig:aut} follows from group theory.\qed

\end{document}